\documentclass[11pt,reqno]{amsart}

\usepackage{amsfonts}
\usepackage{eurosym}
\usepackage{amssymb}
\usepackage{amsthm}
\usepackage{amsmath}
\usepackage{amsaddr}
\usepackage[x11names]{xcolor}
\usepackage{bm}
\usepackage{cite}
\usepackage{mathrsfs}
\usepackage[OT1]{fontenc}
\usepackage[left=2.3cm, right=2.3cm, top=3cm]{geometry}
\usepackage{hyperref}
\hypersetup{colorlinks=true, linkcolor=blue, citecolor=red}

\RequirePackage{times}
\flushbottom

\newtheorem{theorem}{Theorem}[section]

\newtheorem{lemma}[theorem]{Lemma}

\newtheorem{remark}[theorem]{Remark}

\allowdisplaybreaks[4]

\def\n{\textbf{\textit{n}}}
\def\R3{\mathbb{R}^3}
\def\F2o{\overline{F_2}}

\def \E{\mathcal{E}}
\def \C{\overline{C}}

\def\d{{\rm d}}
\def \l {\langle}
\def \r {\rangle}
\def\V{{\mathbf{V}}}

\def\H{\mathbf{H}}
\def\W{\mathbf{W}}
\def\A{\mathbf{A}}

\def\f{\textbf{\textit{f}}}
\def\g{\textbf{\textit{g}}}

\def\u{\textbf{\textit{u}}}
\def\uu{\textbf{\textit{u}}}
\def\vv{\textbf{\textit{v}}}
\def\ww{\textbf{\textit{w}}}
\def\ddt{\frac{\d}{\d t}}
\def\C{\mathcal{C}}

\def\P{\mathbb{P}}
\def\div{\mathrm{div}\,}
\def\L2{L^2(\Omega)}
\def\curl{\mathrm{curl}\, }

\def \au {\rm}
\def \ti {\it}
\def \jou {\rm}
\def \bk {\it}
\def \no#1#2#3 {{\bf #1} (#3), #2.}
\def \eds#1#2#3 {#1, #2, #3.}
\def \nome#1#2 {{\bf #1}, (#2).}

\begin{document}
\title[Mass-Conserving Allen-Cahn Approximation for Binary Fluids]
{On the Mass-Conserving Allen-Cahn Approximation \\ for Incompressible Binary Fluids}

\author[A. Giorgini, M. Grasselli \& H. Wu]{Andrea Giorgini$^\dagger$, Maurizio Grasselli$^\ddagger$ \& Hao Wu$^\ast$
}

\address{$^\dagger$Department of Mathematics \\
Imperial College London\\
London, SW7 2AZ, UK}
\email{a.giorgini@imperial.ac.uk}

\address{$^\ddagger$Dipartimento di Matematica\\
 Politecnico di Milano\\
Milano 20133, Italy}
\email{maurizio.grasselli@polimi.it}

\address{$^\ast$School of Mathematical Sciences \\
Key Laboratory of Mathematics for Nonlinear
Sciences (Fudan University), Ministry of Education\\
Shanghai Key Laboratory for Contemporary Applied Mathematics\\
Fudan University\\
Shanghai 200433, China}
\email{haowufd@fudan.edu.cn}


\subjclass[2010]{35D35, 35K61, 35Q30, 35Q31, 35Q35, 76D27, 76D45}

\keywords{Diffuse interface models, Navier-Stokes equations, conserved Allen-Cahn equation, non-constant density, non-constant viscosity, logarithmic potential, two-phase flow, inviscid flow}

\begin{abstract}
This paper is devoted to the global well-posedness of two Diffuse Interface systems modeling the motion of an incompressible two-phase fluid mixture in presence of capillarity effects in a bounded smooth domain $\Omega\subset \mathbb{R}^d$, $d=2,3$.
We focus on dissipative mixing effects originating from the mass-conserving Allen-Cahn dynamics with the physically relevant Flory-Huggins potential. More precisely, we study the mass-conserving Navier-Stokes-Allen-Cahn system for nonhomogeneous fluids and the mass-conserving Euler-Allen-Cahn system for homogeneous fluids. We prove existence and uniqueness of global weak and strong solutions as well as their property of separation from the pure states. In our analysis, we combine the energy and entropy estimates, a novel end-point estimate of the product of two functions, a new estimate for the Stokes problem with non-constant viscosity, and logarithmic type Gronwall arguments.
\end{abstract}

\maketitle


\section{Introduction}
The flow of a two-phase or multi-phase fluid mixture is nowadays one of the most attractive theoretical and numerical problems in Fluid Mechanics (see, e.g., \cite{AMW1998,GKL2018,LIN2012,PS2016} and the references therein). This is mainly due to the complicated interplay between the motion of the moving and deforming free interfaces separating the two fluids (or phases) and the dynamics of surrounding fluids. Besides, the phenomenon of liquid-liquid phase separation has nowadays become a sort of paradigm in Cell Biology (see, e.g., \cite{AD2019,MZ2022,HWF2014,ShB17}).

A natural description of the dynamics of fluid mixtures is based on a free-boundary formulation. Let $\Omega$ be a bounded domain in $\mathbb{R}^d$ with $d=2,3$, and $T>0$. We assume that $\Omega$ is filled by two incompressible fluids that are immiscible (e.g., oil and water). Denote by $\Omega_1=\Omega_1(t)$ and $\Omega_2=\Omega_2(t)$ the subsets of $\Omega$ containing, respectively, the first and the second fluid components for any time $t\geq 0$. Then the  equations of motion are given by
\begin{equation}
\label{FB}
\begin{cases}
\rho_1 \big( \partial_t \uu_1 + \uu_1 \cdot \nabla \uu_1 \big) - \nu_1 \div D \uu_1 +\nabla p_1 =0, \quad &\div \uu_1 =0, \quad \text{ in } \Omega_1\times (0,T),\\
\rho_2 \big( \partial_t \uu_2 + \uu_2 \cdot \nabla \uu_2 \big) - \nu_2 \div D \uu_2 +\nabla p_2 =0, \quad &\div \uu_2 =0, \quad \text{ in } \Omega_2 \times (0,T).
\end{cases}
\end{equation}
Here, $\uu_1$, $\uu_2$ and $p_1$ and $p_2$ are, respectively, the velocities and pressures of the two fluids, while $\rho_1, \rho_2$ and $\nu_1,\nu_2$ are the (constant) densities and viscosities of the two fluids. The symmetric gradient is defined by $D=\frac12 (\nabla +\nabla^t)$. In system \eqref{FB} the effect of the gravity is neglected for the sake of simplicity. Denoting by $\Gamma=\Gamma(t)$ the (moving) interface between the time-varying domains $\Omega_1$ and $\Omega_2$, then system \eqref{FB} can be equipped with the classical free boundary conditions
\begin{equation}
\label{Y-L}
\uu_1=\uu_2, \quad \big( \nu_1 D \uu_1 -\nu_2 D \uu_2 \big) \cdot \n_\Gamma = (p_1-p_2+\sigma H)\n_\Gamma,  \quad \text{ on }\Gamma \times (0,T),
\end{equation}
together with the no-slip boundary conditions
\begin{equation}
\label{FB-ns}
\uu_1=\mathbf{0}, \quad \uu_2=\mathbf{0} \quad \text{ on }\partial \Omega \times (0,T).
\end{equation}
The vector $\n_\Gamma$ in \eqref{Y-L} is the unit normal vector of the interface from $\partial \Omega_1(t)$, and $H$ denotes the mean curvature of the interface $\Gamma$ (i.e., $H= - \div \n_\Gamma$). In this setting, the free interface $\Gamma(t)$ is assumed to move with the velocity given by
\begin{equation}
\label{inter-vel}
V_{\Gamma(t)}= \uu \cdot \n_{\Gamma(t)}.
\end{equation}
The coefficient $\sigma>0$ in \eqref{Y-L} stands for the surface tension, which introduces a discontinuity in the normal stress proportional to the mean curvature of the interface. Since
$$\ddt \mathcal{H}^{d-1}(\Gamma(t))= - \int_{\Gamma(t)} H V_\Gamma \,  \d \mathcal{H}^{d-1},$$
where $\mathcal{H}^{d-1}$ denotes the $d-1$-dimensional Hausdorff measure,
the (formal) energy identity for system \eqref{FB}-\eqref{inter-vel} reads as follows
\begin{equation}
\label{FB-energy}
\ddt \Big\lbrace  \sum_{i=1,2} \int_{\Omega_i(t)} \frac{\rho_i}{2}|\uu_i|^2 \, \d x + \sigma \mathcal{H}^{d-1}(\Gamma(t)) \Big\rbrace
+  \sum_{i=1,2} \int_{\Omega_i(t)} \nu_i |D \uu_i|^2 \, \d x
=0.
\end{equation}
Concerning the mathematical analysis, we refer to \cite{A2007,DS1995,Plo1993,PS2010-1,PS2016,T1995} for the study of classical and varifold solutions to the system \eqref{FB}-\eqref{inter-vel} with suitable initial conditions.

The twofold Lagrangian and Eulerian nature of system \eqref{FB}-\eqref{Y-L} has led to the breakthrough idea to reformulate it in the Eulerian description by interpreting the effect of the surface tension as a singular force term localized at the free interface, see, e.g., the review paper \cite{SS2003}. Introduce the so-called level set function $\phi: \Omega\times(0,T) \rightarrow \mathbb{R}$ such that
$$
\phi>0 \quad \text{ in } \Omega_1\times (0,T), \quad \phi<0 \quad \text{ in }\Omega_2 \times (0,T), \quad  \phi=0 \quad \text {on } \Gamma \times (0,T),
$$
namely, the interface $\Gamma$ is identified as the zero level set of $\phi$.
We consider the Heaviside type function
\begin{equation}
\label{Heav}
K(\phi)=
\begin{cases}
1 \quad &\phi>0,\\
0 \quad &\phi=0,\\
-1 \quad &\phi<0,
\end{cases}
\end{equation}
and denote by $\uu$ the velocity field such that $\uu=\uu_1$ in $\Omega_1\times (0,T)$ and   $\uu=\uu_2$ in $\Omega_2\times (0,T)$.
It was shown in \cite[Section 2]{CHMS1994} that the system \eqref{FB}-\eqref{Y-L} is formally equivalent to
\begin{equation}
\label{FB2}
\begin{cases}
\rho(\phi) \big( \partial_t \uu + \uu \cdot \nabla \uu\big) - \div (\nu(\phi)D\uu) + \nabla P=\sigma [H(\phi) \nabla \phi] \delta (\phi),\\
\div \uu=0,\\
\partial_t \phi +\uu\cdot \nabla \phi = 0,
\end{cases}
\quad \text{in }\Omega\times(0,T),
\end{equation}
together with the boundary condition \eqref{FB-ns}. In \eqref{FB2}, we have
$$
\rho(\phi)= \rho_1 \frac{1+K(\phi)}{2}+ \rho_2 \frac{1-K(\phi)}{2}, \
\nu(\phi)=\nu_1 \frac{1+K(\phi)}{2}+ \nu_2 \frac{1-K(\phi)}{2}, \
H(\phi)= \div \left( \frac{\nabla \phi}{|\nabla \phi|} \right).
$$
Besides, $\delta$ is the Dirac distribution and $\nabla \phi$ is oriented as $\n_\Gamma$ on $\Gamma$. The equation \eqref{FB2}$_3$ represents the motion of the interface $\Gamma$ that is simply transported by the flow.
This follows from the immiscibility condition, which translates into
$(\uu, 1) \in  \text{Tan} \lbrace (x,t)\in \Omega \times (0,T): x\in \Gamma(t) \rbrace$.
Although the system \eqref{FB2} seems to be more amenable than the system \eqref{FB}-\eqref{Y-L}, the presence of the Dirac mass still makes the analysis challenging \cite{LIN2012}.

In the literature, two different approaches have been introduced to overcome the singular nature of the right-hand side of \eqref{FB2}$_1$, which both rely on the idea of continuous transition at the interface. The first approach is the Level Set method developed in the seminal works \cite{OS1988,OF2002} (see \cite{CHMS1994,SSO1994,SS2003} for applications to two-phase flows). This approach consists in approximating the Heaviside function $K(\phi)$ by a smoothing regularization $K_\varepsilon(\phi)$.
More precisely, for a given $\varepsilon>0$, we introduce the function (cf. \cite{SS2003})
\begin{equation}
\label{Heav-e}
K_\varepsilon(\phi)=
\begin{cases}
1 \quad &\phi>\varepsilon,\\
\displaystyle{\frac12 \left[ \frac{\phi}{\varepsilon} + \frac{1}{\pi} \sin \big( \frac{\pi \phi}{\varepsilon}\big) \right]}\quad &|\phi| \leq \varepsilon,\\
-1 \quad &\phi<-\varepsilon.
\end{cases}
\end{equation}
Then the resulting approximating system reads as follows
\begin{equation}
\label{LS1}
\begin{cases}
\rho_\varepsilon(\phi) \big( \partial_t \uu + \uu \cdot \nabla \uu\big) - \div (\nu_\varepsilon(\phi)D\uu) + \nabla P=\sigma [H(\phi) \nabla \phi] \delta_\varepsilon (\phi),\\
\div \uu=0,\\
\partial_t \phi +\uu\cdot \nabla \phi = 0,
\end{cases}
\quad \text{in } \Omega \times (0,T),
\end{equation}
 where
$$
\rho_\varepsilon(\phi)= \rho_1 \frac{1+K_\varepsilon(\phi)}{2}+ \rho_2 \frac{1-K_\varepsilon(\phi)}{2}, \quad
\nu_\varepsilon(\phi)=\nu_1 \frac{1+K_\varepsilon(\phi)}{2}+ \nu_2 \frac{1-K_\varepsilon(\phi)}{2}, \quad \delta_\varepsilon(\phi)= \frac{\mathrm{d} K_\varepsilon(\phi)}{\mathrm{d} \phi}.
$$
As a consequence of the approximation \eqref{Heav-e}, the thickness of the interface is approximately $2\varepsilon/|\nabla \phi|$. This necessarily requires that $|\nabla \phi|=1$ when $|\phi|\leq \varepsilon$, namely, $\phi$ is a signed-distance function near the interface. However, even though the initial condition is suitably chosen, the evolution under the transport equation \eqref{LS1}$_3$ does not guarantee that this property remains true for all time. This fact had led to different numerical algorithms aiming to avoid the expansion of the interface (see \cite{SSO1994,SS2003} and the references therein).
In addition, as pointed out in \cite{LT1998}, another drawback of this approach is that the dynamics is sensitive to the particular choice of the approximation for the surface stress tensor.

The second approach is the so-called Diffuse Interface method (see \cite{AMW1998,DF20,FLSY2005,LS03,LT1998,YFLS2004} and the references cited therein). This is based on the postulate that the free interface is a thin layer with positive volume, whose thickness is determined by the interactions of particles occurring at small scales. The latter phenomenon is suitably incorporated in the free energy of the system that maintains the integrity of the interface. In this context, the auxiliary phase function $\phi$ represents the difference between the local concentrations of the two fluids (or in some cases, just the rescaled density/volume fractions). This phase function (sometimes called a phase-field) may exhibit a smooth transition at the interface, which is identified as an intermediate level set between the two values $1$ and $-1$ corresponding to the pure phases. The evolution equations for the state variables (density, velocity and concentration) are derived by combining the theory of binary mixtures and the energy-based formalism from  thermodynamics and statistical mechanics. In particular, within the framework of diffuse interface, the surface stress tensor is replaced by a diffuse stress tensor whose action is essentially localized in the regions of high gradients, namely, $- \div(\nabla \phi \otimes \nabla \phi)$. This yields the well-known (Korteweg) capillary force (cf., e.g., \cite{AMW1998,LS03}). As a consequence, a Diffuse Interface system takes the following form
\begin{equation}
 \label{Complex}
\begin{cases}
\rho(\phi) \big( \partial_t \uu + \uu \cdot \nabla \uu \big) - \div (\nu(\phi)D\uu) + \nabla P= -  \sigma \div(\nabla \phi \otimes \nabla \phi),\\
\div \uu=0,\\
\partial_t \phi +\uu\cdot \nabla \phi =\Delta_{\text{diss}},
\end{cases}
\quad \text{in } \Omega \times(0,T),
\end{equation}
where $\uu$ is understood as the (volume) averaged velocity and the term $\Delta_{\text{diss}}$ includes suitable dissipative effects at the free interface.
The averaged density and viscosity of the binary fluids are now given by
\begin{equation}
\label{rhonu}
\rho(\phi)= \rho_1 \frac{1+\phi}{2}+ \rho_2 \frac{1-\phi}{2}, \quad
\nu(\phi)=\nu_1 \frac{1+\phi}{2}+ \nu_2 \frac{1-\phi}{2}.
\end{equation}
The coupling system \eqref{Complex} is equipped with the no-slip boundary condition
$\uu=\mathbf{0}$ on $\partial \Omega \times (0,T)$, together with suitable boundary condition(s) on $\phi$ depending on the specific form of $\Delta_{\text{diss}}$.
Besides, the total energy associated to system \eqref{Complex} is defined as
\begin{align}
E(\uu,\phi)= \int_{\Omega}  \frac12 \rho(\phi) |\uu|^2+ \frac12 |\nabla \phi|^2 + \Psi(\phi)   \, \d x,
\label{EEE}
\end{align}
where $\Psi:[-1,1] \rightarrow \mathbb{R}$ is a suitable potential function.
The first integrand in \eqref{EEE} corresponds to the kinetic energy of the fluid mixture. Next, we note that particles of the two fluids interact at a miscoscopic scale, and their disposition is the result of a competition between the diffusion and the attraction of molecules of the same type (i.e., mixing vs demixing or, ``hydrophilic" vs ``hydrophobic" effects). This competition is described in the Helmholtz free energy of the system such that (see e.g., \cite{CH1958})
$$
\E(\phi)=  \int_{\Omega}  \frac12 |\nabla \phi|^2 + \Psi(\phi) \, \d x.
$$
The gradient term describes the spatial heterogeneity in composition of the mixture. It corresponds to the tendency of the mixture to prefer to be uniform in space \cite{DF20} and provides an approximation of nonlocal interactions between particles (see e.g., \cite{GL98,YFLS2004}). The second part of $\E(\phi)$ is called the homogeneous free energy and the potential $\Psi$ is related to the mixing entropy (cf. e.g., \cite{LL2013}) that has the following form
\begin{equation}  \label{Log}
\Psi(s)=\frac{\theta}{2}\left[ (1+s)\ln(1+s)+(1-s)\ln(1-s)\right]-\frac{%
\theta_0}{2} s^2, \quad s \in (-1,1).
\end{equation}
The potential function given by \eqref{Log} is also closely related to the Flory-Huggins free energy for polymer solutions.
We consider hereafter the physically relevant case $0<\theta<\theta_0$, which implies,
in particular, that $\Psi$ is a non-convex potential with double well structure. In the case $\theta\geq \theta_0>0$, mixing prevails over demixing, and no phase separation will take place.

In the case $\Delta_{\text{diss}}\equiv 0$, the phase function $\phi$ solves a transport equation (cf. \eqref{FB2}, \eqref{LS1}), and the resulting system obeys the following energy identity (at least for smooth solutions)
 \begin{equation}
\label{Complex-EE}
\ddt E(\uu, \phi) +\int_{\Omega} \nu(\phi) |D \uu|^2 \, \d x=0.
\end{equation}
This system, also referred to as the Complex Fluid model in the literature (see e.g., \cite{LIN2012,LZ2014,LZ2015}), dissipates its total energy only due to the fluid viscosity, such that there is no dissipation for $\phi$. We refer the interested readers to \cite{AT2010} for local well-posedness of the initial boundary value problem in some anisotropic $L^q$-Sobolev spaces, and to \cite{LZ2014,LZ2015} for global
well-posedness of the Cauchy problem in $\mathbb{R}^3$ provided that $\rho=\nu=\sigma=1$ and the initial data are close to some equilibrium states. It is worth noting that when $\Delta_{\text{diss}}\equiv 0$, system \eqref{Complex} is closely related to the models for incompressible viscoelastic fluids (see e.g., \cite{LLZ2005, LZ2008}) and to the magneto-hydrodynamical (MHD) system for incompressible flows without magnetic diffusion (see e.g., \cite{LIN2012,RWXZ2014} and the references therein). Before proceeding with the introduction of diffusive relaxations in the transport equation and their physical motivations, it is important to point out two main properties of the partially dissipative system with $\Delta_{\text{diss}}\equiv 0$:
\begin{itemize}
\item[(1)] Conservation of mass:
\begin{equation}
\label{CM}
\int_{\Omega} \phi(t)\, \d x=\int_{\Omega} \phi_0\, \d x, \quad \forall \, t \in [0,T].
\end{equation}
\item[(2)] Conservation of $L^\infty(\Omega)$-norm:
\begin{equation}
\label{CLinf}
\| \phi(t)\|_{L^\infty(\Omega)}=\| \phi_0\|_{L^\infty(\Omega)}, \quad \forall \, t\in [0,T],
\end{equation}
which implies that
\begin{equation}
\label{Crange}
-1\leq \phi_0(x)\leq 1 \quad \text{a.e. in} \ \Omega \quad \Longrightarrow
\quad -1\leq \phi(x,t)\leq 1 \quad \text{a.e. in} \ \Omega \times (0,T).
\end{equation}
\end{itemize}

 The Diffuse Interface theory of binary fluid mixtures takes into account dissipative mechanisms occurring at the interface, namely, $\Delta_{\text{diss}}\neq 0$. In order to include dissipative effects in the dynamics of fluid concentrations, we define the first variation of the Helmholtz free energy $\E(\phi)$, that is usually called the chemical potential
\begin{align}
\mu= \frac{\delta \E(\phi)}{\delta \phi}= -\Delta \phi+ \Psi'(\phi).
\label{00mu}
\end{align}
Two fundamental models proposed in the literature for the conserved dynamics of binary mixtures are:
\begin{itemize}
\item[(1)] \textbf{Mass-conserving Allen-Cahn dynamics} (see e.g. \cite{RS1992,CHL2010})
\begin{equation}
\label{MAC}
\begin{cases}
\partial_t \phi +\uu\cdot \nabla \phi + m \big(\mu-  \overline{\mu}\big)=0, \quad \text{in} \ \Omega \times (0,T),\\
\partial_\n \phi=0, \qquad \qquad\qquad\quad\quad\quad\quad \  \text{on} \ \partial \Omega \times (0,T);
\end{cases}
\end{equation}

\item[(2)] \textbf{Cahn-Hilliard dynamics} (see e.g. \cite{CH1958,CH1961})
\begin{equation}
\label{CH}
\begin{cases}
\partial_t \phi +\uu\cdot \nabla \phi - \mathrm{div}(m \nabla \mu) =0, \quad \text{in} \ \Omega \times (0,T), \\
\partial_\n \phi= \partial_\n \mu=0, \qquad \qquad\qquad\quad\ \ \text{on} \ \partial \Omega \times (0,T).
\end{cases}
\end{equation}
\end{itemize}
Here in \eqref{MAC} $\overline{\mu}$ is the spatial average of the chemical potential defined by $
\overline{\mu}= \frac{1}{|\Omega|}\int_{\Omega} \mu \, \d x$, and $m$ represents the microscopic elastic relaxation time. We note that the equation \eqref{MAC} differs from the classical Allen-Cahn equation (see \cite{AC1979}) due to the presence of the nonlocal term $\overline{\mu}$, see \cite{BS1997,CHL2010,GO1997} and the references cited therein.

From the thermodynamic point of view, the relaxation for $\phi$ describes some generalized diffusion at free interfaces (cf. \cite{HMR2012,HMR2012-2,LT1998}) and $m$ is also referred to as the (diffusion) mobility. In addition, the mixing-demixing mechanism (which also translates into $\mu$) allows a balance that may avoid uncontrolled expansion or shrinkage of the interface layer (cf. \cite{FLSY2005,DF20}). As for the transport equation, both the mass-conserving Allen-Cahn equation \eqref{MAC} and the Cahn-Hilliard equation \eqref{CH} satisfy the property of mass conservation \eqref{CM} and the uniform $L^\infty$ bound \eqref{Crange}. The former is a consequence of the boundary conditions and the incompressibility condition for $\uu$, while the latter is essentially guaranteed by the singular nature of the potential function \eqref{Log}. It is worth mentioning that the Landau theory that leads to the well-known Ginzburg-Landau free energy with a smooth double-well potential like $\Psi_0(s)=\frac14(s^2-1)^2$ gives an approximation of the energy $\E(\phi)$. This regularization can be obtained, up to suitable constants, through a Taylor's expansion of the logarithmic potential $\Psi$. On one hand, it has been widely used in the related literature (see, for instance, \cite{E1989,LS03,Miranville} and the references cited therein). On the other hand, this polynomial approximation has the main drawback that in general the solution $\phi$ may not belong to the physical interval $[-1,1]$ as time evolves (see \cite{Miranville} for detailed discussions on the Cahn-Hilliard equation).

The fundamental feature of the Diffuse Interface method lies in the connection between the systems \eqref{FB}-\eqref{Y-L} and \eqref{Complex}. After rescaling the capillary tensor and the free energy by a parameter $\varepsilon>0$ (that is proportional
to the thickness of the diffuse interface), it is possible to show the convergence of \eqref{Complex} to \eqref{FB}-\eqref{Y-L} in a suitable sense as $\varepsilon \to 0^+$, i.e., the sharp interface limit. In particular, we have the convergence of the (Helmholtz) free energy $\int_{\Omega}  \left[(\varepsilon/2) |\nabla \phi|^2 + \varepsilon^{-1}\Psi(\phi)\right] \d x$ to the area functional $\mathcal{H}^{d-1}(\Gamma)$ (see e.g., \cite{Modica}), and the convergence of the stress tensor
\begin{equation}
\label{DCF}
-\int_{\Omega}  \varepsilon \div(\nabla \phi \otimes \nabla \phi) \cdot \vv \, \d x
\xrightarrow{\varepsilon\rightarrow 0} -\int_{\Gamma} \sigma H \n_\Gamma \cdot \vv \, \d \mathcal{H}^{d-1},
\end{equation}
for some suitable test function $\vv$, where the limit integral corresponds to the weak formulation of \eqref{Y-L} with $\sigma$ depending on the so-called optimal diffuse interface profile. We refer to, for instance, \cite{AL2018} for a rigorous proof on the convergence for a Stokes-Allen-Cahn system with a positive, $\varepsilon$-independent mobility $m$ for sufficiently small times and for well-prepared initial data. However, it is worth mentioning that the convergence in \eqref{DCF} to the mean curvature functional does not hold when $\Delta_{\text{diss}}\equiv 0$ (i.e., a pure transport equation for $\phi$) or when the $\varepsilon$-dependent mobility $m(\varepsilon)$ tends to zero ``too fast" as $\varepsilon\to 0^+$, e.g., $m(\varepsilon)=m_0\varepsilon^\theta$ $(\theta>2)$, see \cite{A2022, AL2014, AS2016} for detailed discussions. Roughly speaking, the pure transport equation (i.e., the lack of the diffusive part compared to the Allen-Cahn or Cahn-Hilliard models) destroys the right shape of the diffuse interface in the normal direction. This also motivates the study of the diffusive model \eqref{Complex} with \eqref{MAC} or \eqref{CH}.

In this paper, we first analyze a Diffuse Interface model for incompressible two-phase viscous fluids that extends the hydrodynamic system recently derived in \cite{JLL2017} (see also \cite[Part I, Chapter 2, Section 4.2.1]{GKL2018}). It accounts for the general situation with unmatched fluid densities and viscosities. The resulting Navier-Stokes-Allen-Cahn (NSAC for short) system reads as follows
\begin{equation}
 \label{Complex2}
\begin{cases}
\rho(\phi) \big( \partial_t \uu + \uu \cdot \nabla \uu \big) - \div (\nu(\phi)D\uu) + \nabla P= - \sigma \div(\nabla \phi \otimes \nabla \phi),\\
\div \uu=0,\\
\partial_t \phi +\uu\cdot \nabla \phi +  m \Big(\mu + \rho'(\phi)\displaystyle{\frac{|\uu|^2}{2}} - \xi\Big) =0,
\end{cases}
\quad \text{in } \Omega \times (0,T),
\end{equation}
 where $\uu$ denotes the volume averaged velocity of the binary fluid mixture and the chemical potential $\mu$ is defined by \eqref{00mu}.
 Without loss of generality, hereafter we simply take the two positive parameters as
$$\sigma=m=1,$$
since their values do not play an essential role in the subsequent analysis.
Neglecting possible nontrivial dynamics on the boundary, e.g., the moving contact lines, we assume that the above system is subject to a no-slip boundary condition for $\uu$ and a homogeneous Neumann boundary condition for $\phi$, namely,
\begin{equation}
\label{bc-C2}
\uu=\mathbf{0},\quad  \partial_{\n} \phi =0 \quad \text{ on } \partial\Omega
\times (0,T).
\end{equation}
We see that in the system \eqref{Complex2}, the conserved dynamics of $\phi$ is now described through a suitable modification of the Allen-Cahn equation \eqref{MAC} such that
$$
\partial_t \phi +\uu\cdot \nabla \phi +  \left( \mu + \rho'(\phi)\frac{|\uu|^2}{2} - \xi \right)=0 \quad \text{ in} \ \Omega \times (0,T),
$$
with $\xi$ being
 the Lagrange multiplier corresponding to the mass conservation, that is,
$$
\xi(t)= \frac{1}{|\Omega|}\int_\Omega  \mu+ \rho'(\phi)\frac{|\uu|^2}{2}  \, \d x \quad \text{ in} \ (0,T).
$$
The system \eqref{Complex2}-\eqref{bc-C2} (formally) satisfies the following basic energy law
\begin{align*}
\ddt E(\uu, \phi)+\int_{\Omega} \nu(\phi) |D \uu|^2 \, \d x + \|\partial_t \phi + \uu \cdot \nabla \phi \|_{\L2}^2=0,
\end{align*}
where the total energy $E(\uu, \phi)$ is the same as in \eqref{EEE}. The energy dissipation of the above system involves both the contributions due to the fluid viscosity and the interfacial mixing. In particular, the dissipative mechanism of $\phi$ is similar to that of the mass-conserving Allen-Cahn equation \eqref{MAC}, but it also includes extra effects due to the fluid advection and the difference of fluid densities in view of the  equation satisfied by $\phi$. Besides, other boundary conditions can be considered, see for instance, the simplified case with periodic boundary conditions \cite[Part I, Chapter 2, Section 4.2.3]{GKL2018}, and the more complicated case involving a generalized Navier boundary conditions for $\uu$ coupled with a dynamic boundary condition for $\phi$, which describes the moving contact line dynamics \cite{MCYZ2017}.

In the second part of this paper, we shall consider the following mass-conserving Euler-Allen-Cahn system for homogeneous incompressible two-phase flows
\begin{equation}
 \label{Euler}
\begin{cases}
\partial_t \uu + \uu \cdot \nabla \uu + \nabla P= - \div(\nabla \phi \otimes \nabla \phi),\\
\div \uu=0,\\
\partial_t \phi +\uu\cdot \nabla \phi + \big(\mu- \overline{\mu}\big)=0,
\end{cases}
\quad \text{ in } \Omega \times (0,T),
\end{equation}
endowed with the boundary conditions
\begin{equation}
\label{bc-EAC}
\uu\cdot \n =0,\quad  \partial_{\n} \phi =0 \quad \text{ on } \partial\Omega
\times (0,T).
\end{equation}
The coupled system \eqref{Euler} is obtained from \eqref{Complex2} in the case of inviscid flow with $\nu=0$ and matched densities (i.e., $\rho_1=\rho_2=1$, so that $\rho\equiv 1$). In particular, we note that it has a partially dissipative structure such that
\begin{equation*}
\ddt E(\uu, \phi) + \|\partial_t \phi + \uu \cdot \nabla \phi\|_{L^2(\Omega)}^2=0.
\end{equation*}

The aim of this paper is to address the existence, uniqueness and (possibly) regularity of the solutions to the aforementioned Diffuse Interface systems:  the Navier-Stokes-Allen-Cahn system  \eqref{Complex2}-\eqref{bc-C2} and the Euler-Allen-Cahn system \eqref{Euler}-\eqref{bc-EAC}.
On one hand, the purpose of our analysis is to stay as close as possible to a thermodynamically grounded framework by keeping densities and viscosities of the fluid mixture to be dependent on $\phi$, and by considering the physically relevant Flory-Huggins type potential \eqref{Log}. Although this choice requires some extra technical efforts (which indeed have independent interests from the mathematical point of view), it provides results that are physically more reasonable. On the other hand, by working in this general setting, we demonstrate that the dynamics originating from general initial data are global (in time) when the mass-conserving Allen-Cahn relaxation is taken into account. This is achieved in two and three dimensions for finite energy (weak) solutions of the general system \eqref{Complex2}, and further more, in two dimensions for more regular solutions in the cases of (i) non-constant density and viscosity, and of (ii) constant density and zero viscosity.

The mathematical literature concerning systems similar to the mass-conserving Navier-Stokes-Allen-Cahn system \eqref{Complex2} has been widely developed in last years, in terms of physical modeling, theoretic analysis and numerical simulation. There are different ways to model the unmatched densities for incompressible binary fluid mixtures, see for instance, \cite{AGG2012,B2001,GKL2018,HMR2012,HMR2012-2,FK2017,LSY2015,LT1998} and the references cited therein. The model \eqref{Complex2} can be derived via an energetic variational approach based on the least action principle and the maximum dissipation principle due to Onsager, see \cite{HKL10,JLL2017,GKL2018} (we also refer to \cite{LSY2015,NSW14} for the Navier-Stokes-Cahn-Hilliard system and its variants). One feature of our system is that it fulfills three important physical constraints such as conservation of mass, dissipation of energy and, in addition, the force balance (cf. \cite{GKL2018}). The additional nonlinear coupling term $(1/2)\rho'(\phi)|\uu|^2$ represents certain density force that well captures the macroscopic fluid effect on the microscopic descriptions due to density differences \cite{JLL2017}. On the other hand, as observed in \cite[Remark 2.2]{AGG2012} for the Navier-Stokes-Cahn-Hilliard system with unmatched densities, the term $(1/2)\rho'(\phi)|\uu|^2$ in the chemical potential $\mu$ is not an objective scalar, which thus makes the system \eqref{Complex2} not frame invariant. Nevertheless, an alternative frame invariant formulation could be introduced for the Navier-Stokes-Allen-Cahn system using an idea similar to that in \cite{AGG2012}. This leads to a differential system with a different coupling structure (see \cite{LOCY} for an attempt to a formal derivation), which is beyond the scope of this paper and will be analyzed in the next future.

As far as the mathematical analysis is concerned, the system \eqref{Complex2} with boundary conditions \eqref{bc-C2} and suitable initial conditions has been investigated in \cite{JLL2017} in the specific case of constant viscosity, a standard advective Allen-Cahn equation with a regular Landau potential $\Psi_0(s)=\frac14(s^2-1)^2$ and without the constraint of mass conservation. The authors proved the existence of a global weak solution in three dimensions and the existence as well as uniqueness of a global strong solution in two dimensions. In the two dimensional case, they also showed the convergence of a global weak solution to a single equilibrium as time goes to infinity and established the existence of a global attractor. Thanks to the choice of potential therein and the absence of mass constraint, the authors of \cite{JLL2017} could easily ensure that the phase function $\phi$ always takes its value in the physical interval $[-1,1]$. This property followed from a comparison principle for the advective Allen-Cahn equation and was crucial for the analysis therein. However, the mass constraint that is important for the conserved dynamics of binary fluids, does not allow to establish a similar comparison principle for our system \eqref{Complex2} if the double-well potential $\Psi$ is smooth. We also mention the previous contributions \cite{GG2010,GG2010-2,HM2019,W17,WX13,XZL2010} for the case with constant density, and \cite{AL2018} for the sharp interface limit in the quasi-stationary Stokes case. Besides, there are works devoted to Navier-Stokes-Allen-Cahn models in which the fluid density is regarded as an independent variable (see, for instance, \cite{FL2019,LDH2016,LH2018} and the references cited therein). In all these works, the potential was taken as the classical Landau double-well and the mass conservation property was not considered. The (non-conserved) compressible case (see \cite{Bl1999,FK2017} for modeling issues) has been analyzed, for instance, in \cite{DLL2013,FRPS2010,K2012,YZ2019} (see also \cite{W2011} for the sharp interface limit).
On the other hand, in comparison with the viscous case mentioned above, only few works have been addressed with the Euler-Allen-Cahn system \eqref{Euler}-\eqref{bc-EAC}. In this respect, we mention \cite{ZGH2011}, where the authors proved local existence of smooth solutions for the Euler-Allen-Cahn system in the case of a regular Landau potential and without the constraint of mass conservation (see also \cite{Gal2016} for the analysis of a nonlocal model).

Before concluding the introduction, let us make some more precise comments on the mathematical analysis of systems \eqref{Complex2} and \eqref{Euler}, in particular, on the main novelties of our techniques. Concerning the Navier-Stokes-Allen-Cahn system \eqref{Complex2} subject to the boundary conditions in \eqref{bc-C2}, we prove (1) the existence of a global weak solution with finite initial energy in both two and three dimensions, that is, with the initial datum $(\uu_0,\phi_0)\in \H_\sigma \times H^1(\Omega)$ (see Theorems \ref{weak-D} and \ref{W-S}), and (2) the existence of a global strong solution in two dimensions with more regular initial data $(\uu_0,\phi_0)\in \V_\sigma \times H^2(\Omega)$ such that $\Psi'(\phi_0)\in L^2(\Omega)$  (see Theorem \ref{strong-D}). For the latter, we combine the classical energy method with a new end-point estimate of the product of two functions in $L^2(\Omega)$ (see Lemma \ref{result1}) and a new estimate for the Stokes system with non-constant viscosity (see Appendix \ref{App-0}). The proof is concluded with a nontrivial logarithmic Gronwall argument that leads to a double-exponential control. However, in light of the singularity of the Flory-Huggins potential \eqref{Log}, uniqueness of these strong solutions turns out to be a hard task. To overcome this difficulty, we establish some global estimates on the derivatives of the mixing entropy
$$F(s)=\frac{\theta}{2}\left[ (1+s)\ln(1+s)+(1-s)\ln(1-s)\right], \quad s \in (-1,1)$$ which corresponds to the convex part of \eqref{Log}, provided that $\Vert\rho^\prime\Vert_{L^\infty(-1,1)}$ is small enough and $F''(\phi_0)\in L^1(\Omega)$. These entropy estimates allow us to prove that $F''(\phi)^2 \ln (1+F''(\phi)) \in L^1(\Omega \times (0,T))$, and in turn, the uniqueness of global strong solutions in two dimensions. As a further consequence of these entropy estimates,
we achieve the so-called uniform separation property, which means that the phase function $\phi$ stays uniformly away from the pure states $\pm 1$ in finite time (see Theorem \ref{Proreg-D}).\footnote{It is worth pointing out that in Theorem \ref{Proreg-D} the initial concentration $\phi_0$ for strong solutions is not strictly separated from the pure phases. Indeed, the imposed conditions $F'(\phi_0)\in L^2(\Omega)$ or $F''(\phi_0)\in L^1(\Omega)$ allow $\phi_0$ being arbitrarily close to $+1$ and $-1$. This is different from the case of the Navier-Stokes-Cahn-Hilliard system in two dimensions, see e.g., \cite{GMT2019,HW21}.}
This crucial fact, besides being physically relevant, can yield further regularity properties of the strong solutions, because the singular potential \eqref{Log} can thus be regarded as a smooth, globally Lipschitz function on a compact subset of $(-1,1)$ (cf. \cite{A2009,GGW2018,GMT2019} for hydrodynamic systems involving the Cahn-Hilliard equation). Also, it yields a rigorous justification of the approximation based on the regular double-well potential. Next, concerning the inviscid case, i.e., the Euler-Allen-Cahn system \eqref{Euler}-\eqref{bc-EAC}, although it turns out to be similar to the MHD system with magnetic diffusion and without viscosity, the classical argument in the literature (see, e.g., \cite{CW2011}) does not apply, mainly because of the singular potential \eqref{Log}. In our proof, it is crucial to use the structure of the incompressible Euler equations \eqref{Euler}$_1$-\eqref{Euler}$_2$ and the end-point estimate of the product in $L^2(\Omega)$ (see Lemma \ref{result1}). This gives the existence of global solutions with initial datum $(\uu_0,\phi_0) \in (\H_\sigma\cap \mathbf{H}^1(\Omega)) \times H^2(\Omega)$ in two dimensions. Moreover, in light of the entropy estimates, we also prove the existence of smoother global solutions originating from the initial data $(\uu_0,\phi_0) \in (\H_\sigma\cap \mathbf{W}^{1,p}(\Omega)) \times H^2(\Omega)$ provided that $p>2$ and $\nabla \mu_0:= \nabla (-\Delta \phi_0+\Psi'(\phi_0))\in \mathbf{L}^2(\Omega)$. These results are summarized in Theorem \ref{Th-EAC}.
\smallskip

\textbf{Plan of the paper.} In Section \ref{2} we introduce the notations, some functional inequalities and then prove an end-point estimate for the product of two functions. Section \ref{S-WEAK} is devoted to the existence and (possible) uniqueness of global weak solutions for the Navier-Stokes-Allen-Cahn system \eqref{Complex2}-\eqref{bc-C2}.
In Section \ref{S-STRONG} we study the existence and uniqueness of global strong solutions to the Navier-Stokes-Allen-Cahn system \eqref{Complex2}-\eqref{bc-C2} in two dimensions. Section \ref{EAC-sec} is devoted to the existence of  global solutions to the Euler-Allen-Cahn system \eqref{Euler}-\eqref{bc-EAC}.
Some open problems are presented in Section \ref{con}.
In Appendix \ref{App-0} we prove a result on the Stokes problem with variable viscosity, and in Appendix \ref{App} we recall the Osgood lemma as well as two logarithmic versions of the Gronwall lemma.

\section{Preliminaries}
\label{2}
\setcounter{equation}{0}
\subsection{Function spaces}
For a real Banach space $X$, its norm is denoted by $\|\cdot\|_{X}$.
The symbol $\langle\cdot, \cdot\rangle_{X',X}$ stands for the duality pairing between $X$ and its dual space $X'$. The boldface letter $\bm{X}$ denotes the vectorial space endowed with the product structure.
We assume that $\Omega\subset \mathbb{R}^d$, $d=2,3$, is a bounded domain with smooth boundary $\partial \Omega$, $\n$ is the unit outward normal vector on $\partial \Omega$, and
 $\partial_\n$ denotes the outer normal derivative on $\partial \Omega$.
We denote the Lebesgue spaces by $L^p(\Omega)$ $(p\geq 1)$  with norms $\|\cdot\|_{L^p(\Omega)}$. When $p=2$, the inner product in the Hilbert space $L^2(\Omega)$  is denoted by
$(\cdot, \cdot)$.
For $s\in \mathbb{R}$, $p\geq 1$, $W^{s,p}(\Omega)$
denotes the Sobolev space with its corresponding norm $\|\cdot\|_{W^{s,p}(\Omega)}$. If $p=2$, we use the notation $W^{s,p}(\Omega)=H^s(\Omega)$. For every $f\in (H^1(\Omega))'$, we denote by $\overline{f}$ the generalized mean value over $\Omega$ defined by
$\overline{f}=|\Omega|^{-1}\langle f,1\rangle_{(H^1(\Omega))',H^1(\Omega)}$. If $f\in L^1(\Omega)$, then $\overline{f}=|\Omega|^{-1}\int_\Omega f \, \d x$.
Thanks to the Poincar\'{e}-Wirtinger inequality, there exists a positive constant $C=C(\Omega)$ such that
\begin{equation}
\label{normH1-2}
\| f\|_{H^1(\Omega)}\leq C \left(\| \nabla f\|_{L^2(\Omega)}^2+ \left|\int_{\Omega} f \, \d x\right|^2\right)^\frac12, \quad \forall \, f \in H^1(\Omega).
\end{equation}
We introduce the following Hilbert spaces of solenoidal vector-valued functions (see \cite{Galdi,Temam})
\begin{align*}
&\H_\sigma= \left\lbrace \uu\in \mathbf{L}^2(\Omega): \mathrm{div}\, \uu=0,\ \uu\cdot \n =0\ \  \text{on}\ \partial \Omega \right\rbrace = \overline{C_{0,\sigma}^\infty(\Omega)}^{\mathbf{L}^2(\Omega)},\\
& \V_\sigma =\left\lbrace \uu\in \mathbf{H}^1(\Omega): \mathrm{div}\, \uu=0,\ \uu=\mathbf{0}\ \  \text{on}\ \partial \Omega \right\rbrace=\overline{C_{0,\sigma}^\infty(\Omega)}^{\mathbf{H}^1(\Omega)},
\end{align*}
where $C_{0,\sigma}^\infty(\Omega)$ is  the space of divergence-free vector fields in $C_{0}^\infty(\Omega)$.
We also use the notations $(\cdot, \cdot)$ and
$\| \cdot \|_{L^2(\Omega)}$ for
the inner product and the norm in $\H_\sigma$. The space $\V_\sigma$ is endowed with the inner product and norm given by
$( \uu,\vv )_{\V_\sigma}=
( \nabla \uu,\nabla \vv )$ and  $\|\uu\|_{\V_\sigma}=\| \nabla \uu\|_{L^2(\Omega)}$, respectively.
Its dual space is denoted by $\V_\sigma'$. We recall the Korn inequality
\begin{equation}
\label{KORN}
\|\nabla\uu\|_{L^2(\Omega)} \leq \sqrt2\|D\uu\|_{L^2(\Omega)} \leq \sqrt2 \| \nabla \uu\|_{L^2(\Omega)},
\quad \forall \, \uu \in \V_\sigma,
\end{equation}
where $D\uu =  \frac12\big(\nabla \uu+ (\nabla \uu)^t\big)$.
Next, we define the Hilbert space
$\W_\sigma= \mathbf{H}^2(\Omega)\cap \V_\sigma$
with the inner product and norm
$ ( \uu,\vv)_{\W_\sigma}=( \A\uu, \A \vv )$ and $\| \uu\|_{\W_\sigma}=\|\A \uu \|_{L^2(\Omega)}$, where $\A$ is the Stokes operator.
We recall that there exists a positive constant $C$ depending only on $\Omega$ such that
\begin{equation}
\label{H2equiv}
 \| \uu\|_{H^2(\Omega)}\leq C\| \uu\|_{\W_\sigma}, \quad \forall \, \uu\in \W_\sigma.
\end{equation}

\subsection{Analytic tools}
We report the Ladyzhenskaya, Agmon, Gagliardo-Nirenberg, Brezis-Gallouet-Wainger and trace interpolation inequalities:
\begin{align}
\label{LADY}
&\| f\|_{L^4(\Omega)}\leq C \|f\|_{L^2(\Omega)}^{\frac12}\|f\|_{H^1(\Omega)}^{\frac12},  &&\forall \, f \in H^1(\Omega), \ d=2,\\
\label{GN2}
&\| f\|_{L^p(\Omega)}\leq C p^\frac12 \| f\|_{L^2(\Omega)}^{\frac{2}{p}} \| f\|_{H^1(\Omega)}^{1-\frac{2}{p}},  &&\forall \, f \in H^1(\Omega),\ \  2\leq p<\infty, \ d=2,\\
\label{GN3}
&\| f\|_{L^p(\Omega)}\leq C(p) \| f\|_{L^2(\Omega)}^{\frac{6-p}{2p}} \| f\|_{H^1(\Omega)}^{\frac{3(p-2)}{2p}},  &&\forall \, f \in H^1(\Omega),\ \ 2\leq p\leq 6, \ d=3,\\
\label{Agmon2d}
&\| f\|_{L^\infty(\Omega)}\leq C \|f\|_{L^2(\Omega)}^{\frac12}\|f\|_{H^2(\Omega)}^{\frac12},  && \forall \, f \in H^2(\Omega),\ d=2,\\
\label{GN-L4}
&\| \nabla f\|_{L^4(\Omega)}\leq C\| f \|_{H^2(\Omega)}^\frac12
\| f \|_{L^\infty(\Omega)}^\frac12, && \forall \, f \in H^2(\Omega),\ d=2,3,\\
\label{BGI}
&\| f\|_{L^\infty(\Omega)}\leq C \| f\|_{H^1(\Omega)} \ln^\frac12 \left( {e}\frac{\| f\|_{H^2(\Omega)}}{\| f\|_{H^1(\Omega)}} \right), &&\forall \, f \in H^2(\Omega), \ d=2,\\
\label{BGW}
&\| f\|_{L^\infty(\Omega)}\leq C(p) \| f\|_{H^1(\Omega)} \ln^\frac12 \left( C(p) \frac{\| f\|_{W^{1,p}(\Omega)}}{\| f\|_{H^1(\Omega)}} \right) , &&\forall \, f \in W^{1,p}(\Omega), \ p>2, \ d=2,\\
\label{trace}
&\| f\|_{L^2(\partial \Omega)} \leq C \| f\|_{\L2}^\frac12 \| f\|_{H^1(\Omega)}^\frac12, &&\forall \, f \in H^1(\Omega), \ d=2,3.
\end{align}
Here, the positive constant $C$ depends only on $\Omega$, whereas the positive constant $C(p)$ depends on $\Omega$ and $p$.
\smallskip

We now prove the following end-point estimate for the product of two functions in two dimensions, which will play an important role in the subsequent analysis.

\begin{lemma}
\label{result1}
Let $\Omega$ be a bounded domain in $\mathbb{R}^2$ with smooth boundary $\partial\Omega$. Assume that $f\in H^1(\Omega)$, $g\in L^p(\Omega)$ for some $p>2$. Then, we have
\begin{equation}
\label{logprod}
\| f g\|_{L^2(\Omega)}\leq C \left(\frac{p}{p-2} \right)^\frac12 \| f\|_{H^1(\Omega)}
\| g\|_{L^2(\Omega)}  \ln^{\frac12} \left(  {e} |\Omega|^{\frac{p-2}{2p}} \frac{\| g\|_{L^p(\Omega)}}{\| g\|_{L^2(\Omega)}} \right),
\end{equation}
for some positive constant $C$ depending only on $\Omega$.
\end{lemma}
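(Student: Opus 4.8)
The plan is to deduce \eqref{logprod} from the two-dimensional Gagliardo--Nirenberg inequality \eqref{GN2}, whose constant already grows like $p^{1/2}$, combined with Hölder's inequality and an $L^2$--$L^p$ interpolation, and then to close the argument by optimizing a free integrability exponent. First I would observe that, since $g\not\equiv 0$, the ratio $R:=\|g\|_{L^p(\Omega)}/\|g\|_{L^2(\Omega)}$ is well defined and, by Hölder's inequality, $\|g\|_{L^2(\Omega)}\le|\Omega|^{\frac{p-2}{2p}}\|g\|_{L^p(\Omega)}$, hence $|\Omega|^{\frac{p-2}{2p}}R\ge 1$. For a parameter $m\in(2,p]$ to be fixed later, Hölder's inequality with conjugate exponents $\frac{m}{m-2}$ and $\frac{m}{2}$ gives
\[
\|fg\|_{L^2(\Omega)}^2=\int_\Omega |f|^2|g|^2\,\d x\le \|f\|_{L^{\frac{2m}{m-2}}(\Omega)}^2\,\|g\|_{L^m(\Omega)}^2 .
\]

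Next I would estimate the two factors separately. Applying \eqref{GN2} with exponent $\frac{2m}{m-2}$ and using $\|f\|_{L^2(\Omega)}\le\|f\|_{H^1(\Omega)}$ yields $\|f\|_{L^{2m/(m-2)}(\Omega)}\le C\big(\tfrac{2m}{m-2}\big)^{1/2}\|f\|_{H^1(\Omega)}$, while the standard interpolation inequality gives $\|g\|_{L^m(\Omega)}\le\|g\|_{L^2(\Omega)}^{1-\theta}\|g\|_{L^p(\Omega)}^{\theta}$ with $\theta=\frac{p(m-2)}{m(p-2)}\in[0,1]$. Setting $\alpha:=\frac{p-2}{2p}$ and $s:=\frac{m-2}{m}\in(0,2\alpha]$, so that $\frac{2m}{m-2}=\frac{2}{s}$ and $\theta=\frac{s}{2\alpha}$, the two bounds combine into
\[
\|fg\|_{L^2(\Omega)}\le C\Big(\tfrac{2}{s}\Big)^{1/2}\|f\|_{H^1(\Omega)}\,\|g\|_{L^2(\Omega)}\,R^{\frac{s}{2\alpha}},\qquad s\in(0,2\alpha].
\]

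It then remains to optimize over $s$. Since $0<s<1$, I would split $R^{s/(2\alpha)}=|\Omega|^{-s/2}\big(|\Omega|^{\alpha}R\big)^{s/(2\alpha)}\le\max\{1,|\Omega|^{-1/2}\}\,\big(|\Omega|^{\alpha}R\big)^{s/(2\alpha)}$, introduce $L:=\log\!\big(e\,|\Omega|^{\alpha}R\big)\ge 1$ so that $\big(|\Omega|^{\alpha}R\big)^{s/(2\alpha)}=e^{(L-1)s/(2\alpha)}\le e^{Ls/(2\alpha)}$, and choose $s=s^{\ast}:=\alpha/L$. This choice is admissible because $L\ge1$ forces $0<s^{\ast}\le 2\alpha$, equivalently $2<m^{\ast}:=\tfrac{2}{1-s^{\ast}}\le p$, so that both \eqref{GN2} (finite exponent) and the $L^2$--$L^p$ interpolation are legitimate for $m^{\ast}$. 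With it, $\big(\tfrac{2}{s^{\ast}}\big)^{1/2}e^{Ls^{\ast}/(2\alpha)}=\sqrt{2L/\alpha}\,e^{1/2}=2\sqrt{e}\,\big(\tfrac{p}{p-2}\big)^{1/2}L^{1/2}$ since $1/\alpha=2p/(p-2)$, which is exactly \eqref{logprod} with a constant depending only on $\Omega$.

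The only delicate point — and the one I would be most careful about — is the bookkeeping that keeps the final constant independent of $p$: the $p^{1/2}$ growth in \eqref{GN2} must precisely absorb the $(p/(p-2))^{1/2}$ singularity as $p\to2^{+}$, and one must verify that the optimal exponent $m^{\ast}$ really stays strictly between $2$ and $p$ so that the Gagliardo--Nirenberg inequality has a finite exponent and the interpolation remains valid; once these two points are checked, everything else is a routine chain of inequalities.
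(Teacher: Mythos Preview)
Your argument is correct and considerably more elementary than the paper's. The paper proceeds by a Littlewood--Paley style decomposition: it expands $f$ in the eigenbasis of the Neumann operator $A=-\Delta+I$, groups the modes into dyadic frequency blocks $f_n$ plus a high-frequency tail $f_N^{\perp}$, applies Agmon's inequality \eqref{Agmon2d} on each block to get $\|f_n\|_{L^\infty}\lesssim\|f_n\|_{H^1}$, uses \eqref{GN2} on the tail paired with $\|g\|_{L^p}$, and then chooses the cutoff $N$ so that the tail contribution balances the logarithmic sum over the blocks. Your route bypasses all of this: it uses only H\"older, the sharp-constant Gagliardo--Nirenberg inequality \eqref{GN2} once at the exponent $2m/(m-2)$, $L^2$--$L^p$ interpolation of $g$, and an explicit optimisation in the free parameter $s=(m-2)/m$.

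What each approach buys: yours is shorter and transparently shows that the whole lemma is equivalent to the $q^{1/2}$ growth in \eqref{GN2}; indeed, \eqref{GN2} with its sharp constant is essentially encoding the same frequency balancing that the paper does by hand, so you are using it as a black box where the paper reproves it in situ via the spectral splitting. The paper's approach, on the other hand, is self-contained (it does not need the sharp form of \eqref{GN2} as an input, only \eqref{Agmon2d} and elliptic regularity), and its decomposition technique is more flexible if one wants to push to other borderline settings. One minor remark: in your final paragraph the phrase ``the $p^{1/2}$ growth in \eqref{GN2} must precisely absorb the $(p/(p-2))^{1/2}$ singularity'' is slightly misleading, since the two $p$'s are different exponents and the $(p/(p-2))^{1/2}$ factor is exactly what survives in \eqref{logprod}; the relevant bookkeeping, which you carried out correctly, is that the constant $C$ multiplying $(p/(p-2))^{1/2}L^{1/2}$ ends up independent of $p$.
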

\begin{proof}
Let us consider the Neumann operator $A=-\Delta + I$ on $L^2(\Omega)$ with domain $D(A)=\lbrace u\in H^2(\Omega): \partial_{\n}u=0$ on $\partial \Omega\rbrace$. By the classical spectral theory, there exists a sequence of positive eigenvalues $\{\lambda_k\}$ ($k\in \mathbb{N}$) associated with $A$ such that $\lambda_1=1$, $\lambda_{k}\leq \lambda_{k+1}$ and $\lambda_{k}\nearrow +\infty$ as $k$ goes to $+\infty$. The sequence of eigenfunctions $w_k\in D(A)$ satisfying $A w_k=\lambda_k w_k$ forms an orthonormal basis in $L^2(\Omega)$ and an orthogonal basis in $H^1(\Omega)$.

Let us fix $N \in \mathbb{Z}^+$ whose value will be chosen later. We write the function $f$ as follows
\begin{equation}
\label{decompsition}
f=\sum_{n=0}^N f_n + f_N^{\bot},
\end{equation}
where
$$
f_n=\sum_{k:\, {e}^n\leq \sqrt{\lambda_k}<{e}^{n+1}}
(f,w_k) w_k, \quad f_{N}^{\bot}= \sum_{k:\,\sqrt{\lambda_k} \geq {e}^{N+1}} (f,w_k) w_k.
$$
By using the above decomposition and H\"{o}lder's inequality, we have
\begin{align*}
\| f g\|_{L^2(\Omega)} &\leq
\sum_{n=0}^N \| f_n g \|_{L^2(\Omega)}+
\| f_{N}^{\bot} g \|_{L^2(\Omega)} \\
& \leq \sum_{n=0}^N \| f_n\|_{L^\infty(\Omega)} \| g\|_{L^2(\Omega)}
+ \| f_N^{\bot}\|_{L^{p'}(\Omega)}
\| g\|_{L^p(\Omega)},
\end{align*}
where $p>2$ and $\frac{1}{p}+\frac{1}{p'}=\frac12$. By using \eqref{GN2} and \eqref{Agmon2d}, we obtain
\begin{align*}
\| fg\|_{L^2(\Omega)}
&\leq C \sum_{n=0}^N \| f_n\|_{L^2(\Omega)}^\frac12
\| f_n\|_{H^2(\Omega)}^\frac12 \| g\|_{L^2(\Omega)}+ C \left(\frac{2p}{p-2}\right)^\frac12 \| f_N^{\bot}\|_{L^2(\Omega)}^\frac{2}{p'}
\| f_N^{\bot}\|_{H^1(\Omega)}^{1-\frac{2}{p'}}
\| g\|_{L^p(\Omega)},
\end{align*}
for some $C$ independent of $p$. We recall that
$$
\| f_n\|_{L^2(\Omega)}^2= \sum_{k:\, {e}^n\leq \sqrt{\lambda_k}<{e}^{n+1}} |(f,w_k)|^2 \leq \frac{1}{{e}^{2n}} \sum_{k:\, {e}^n\leq \sqrt{\lambda_k}<{e}^{n+1}} \lambda_k |(f,w_k)|^2 = \frac{1}{{e}^{2n}} \| f_n\|_{H^1(\Omega)}^2,
$$
where we have used the fact $D(A^\frac12)=H^1(\Omega)$.
Observing that $\partial_\n f_n=0$ on $\partial \Omega$ (indeed $f_n$ is a finite sum of $w_k$'s), by the regularity theory of the Neumann problem, we have
\begin{align*}
\| f_n\|_{H^2(\Omega)}^2 &\leq C \| A f_n\|_{L^2(\Omega)}^2= C \sum_{k:\, {e}^n\leq \sqrt{\lambda_k}<{e}^{n+1}} \lambda_k^2 |(f,w_k)|^2\\
&\leq C  \sum_{k:\, {e}^n\leq \sqrt{\lambda_k}<{e}^{n+1}} {e}^{2(n+1)} \lambda_k |(f,w_k)|^2\\
&\leq C {e}^{2(n+1)} \| f_n\|_{H^1(\Omega)}^2.
\end{align*}
Thus, we infer that
$$
\| f_n\|_{L^2(\Omega)}^\frac12
\| f_n\|_{H^2(\Omega)}^\frac12 \leq C {e}^\frac12 \| f_n\|_{H^1(\Omega)},
$$
where the constant $C>0$ is independent of $n$. On the other hand, reasoning as above, we deduce that
$$
\| f_{N}^{\bot}\|_{L^2(\Omega)}^2\leq \frac{1}{{e}^{2(N+1)}} \| f_{N}^{\bot}\|_{H^1(\Omega)}^2.
$$

Combining the above inequalities and applying the Cauchy-Schwarz inequality, we get
\begin{equation}
\label{est2}
\begin{split}
\| fg\|_{L^2(\Omega)} &\leq
C \sum_{n=0}^N {e}^\frac12 \| f_n\|_{H^1(\Omega)} \| g\|_{L^2(\Omega)} + C \frac{\Big(\frac{2p}{p-2}\Big)^\frac12}{{e}^{\frac{2(N+1)}{p'}}} \| f_N^{\bot}\|_{H^1(\Omega)}  \| g\|_{L^p(\Omega)} \\
&\leq C  \| g\|_{L^2(\Omega)}
\left(  \sum_{n=0}^N {e}^\frac12 \| f_n\|_{H^1(\Omega)}
+ \frac{\Big(\frac{2p}{p-2}\Big)^\frac12}{{e}^{\frac{(p-2)(N+1)}{p}}}
\frac{\| g\|_{L^p(\Omega)}}{\| g\|_{L^2(\Omega)}} \| f_N^{\bot}\|_{H^1(\Omega)}\right) \\
&\leq C \| g\|_{L^2(\Omega)}
\left(   {e} (N+1) + \frac{\Big(\frac{2p}{p-2}\Big)}{{e}^{\frac{2(p-2)(N+1)}{p}}}
\frac{\| g\|^2_{L^p(\Omega)}}{\| g\|^2_{L^2(\Omega)}} \right)^\frac12
 \left( \sum_{n=0}^N
\| f_n\|_{H^1(\Omega)}^2  +\| f_N^{\bot}\|^2_{H^1(\Omega)}
\right)^\frac12 \\
&\leq  C \| g\|_{L^2(\Omega)}
\left(   {e} (N+1) + \frac{ \Big(\frac{2p}{p-2}\Big)}{{e}^{\frac{2(p-2)(N+1)}{p}}}
\frac{\| g\|_{L^p(\Omega)}^2}{\| g\|_{L^2(\Omega)}^2} \right)^\frac12 \| f\|_{H^1(\Omega)},
\end{split}
\end{equation}
where we have used the fact $p'=\frac{2p}{p-2}$. Here, the constant $C>0$ is independent of $N$. Now, we choose the positive integer $N$ such that
$$
\frac{p}{2(p-2)} \ln \left( {e} |\Omega|^{\frac{p-2}{p}}\frac{\| g\|^2_{L^{p}(\Omega)}}{\| g\|^2_{L^2(\Omega)}}\right) \leq N+1 < 1+ \frac{p}{2(p-2)}\ln \left({e} |\Omega|^{\frac{p-2}{p}}\frac{\| g\|^2_{L^p(\Omega)}}{\| g\|^2_{L^2(\Omega)}}\right).
$$
We observe that the logarithm term in the above relations is greater than $1$ for any function $g\in L^p(\Omega)$ with $p>2$ and $g\neq 0$.
Then by using the choice of $N$ in \eqref{est2}, we infer that
\begin{align*}
\| f g\|_{L^2(\Omega)}
&\leq
C  \| f\|_{H^1(\Omega)} \| g\|_{L^2(\Omega)}
\Bigg(  {e} \left[ 1+\frac{p}{2(p-2)} \ln \left( {e} |\Omega|^{\frac{p-2}{p}} \frac{\| g\|^2_{L^p(\Omega)}}{\| g\|^2_{L^2(\Omega)}}\right) \right] +\frac{2p}{{e} (p-2) |\Omega|^{\frac{p-2}{p}}} \Bigg)^\frac12\\
&\leq C  \| f\|_{H^1(\Omega)} \| g\|_{L^2(\Omega)}
\Bigg[  \frac{3{e}}{2} \frac{p}{(p-2)} \ln \left( {e}^2 |\Omega|^{\frac{p-2}{p}} \frac{\| g\|^2_{L^p(\Omega)}}{\| g\|^2_{L^2(\Omega)}}\right) +\frac{2p}{{e} (p-2) |\Omega|^{\frac{p-2}{p}}} \Bigg]^\frac12,
\end{align*}
which implies the desired conclusion.
\end{proof}

\begin{remark}\label{gmtPC}
Lemma \ref{result1} is a generalization of \cite[Proposition C.1]{GMT2019}, which states that in a smooth bounded domain $\Omega \subset \mathbb{R}^2$, for any $f,g\in H^1(\Omega)$, it holds
\begin{equation*}
\| f g\|_{L^2(\Omega)}\leq C \| f\|_{H^1(\Omega)}
\| g\|_{L^2(\Omega)}  \ln^{\frac12} \left(  {e} \frac{\| g\|_{H^1(\Omega)}}{\| g\|_{L^2(\Omega)}} \right).
\end{equation*}
Moreover, the conclusion of Lemma \ref{result1} also holds in $\mathbb{T}^2$.
\end{remark}

\begin{remark}
It is well-known that $H^1(\Omega)$ is not a Banach algebra in two dimensions. An interesting application of Lemma \ref{result1} together with the Brezis-Gallouet-Wainger inequality \eqref{BGW} is that
$$
\| f g\|_{H^1(\Omega)}\leq C_1 \| f\|_{H^1(\Omega)} \| g\|_{H^1(\Omega)} \ln^{\frac12} \left( C_2 \frac{\| g\|_{W^{1,p}(\Omega)}}{\| g\|_{H^1(\Omega)}}\right),
$$
for any $f\in H^1(\Omega)$, $g \in W^{1,p}(\Omega)$ with $p>2$, where $C_1$ and $C_2$ are two positive constants depending only on $\Omega$ and $p$.
\end{remark}

\begin{remark}
Lemma \ref{result1} can also be regarded as a generalization of the H\"{o}lder and Young inequalities. It provides a remedy for the lack of the critical Sobolev embedding $H^1(\Omega) \hookrightarrow L^\infty(\Omega)$ when $d= 2$. The inequality \eqref{logprod} is sharp since the product between $f$ and $g$ is not defined in $\L2$ if $f\in H^1(\Omega)$ and $g\in L^2(\Omega)$. Indeed, we have the following counterexample in $\mathbb{R}^2$:
$$
g(x)= \frac{1}{|x|\ln^{\frac34} \left(\frac{1}{|x|}\right)}, \quad f(x)=\ln^{\frac12-\frac{1}{100}} \left( \frac{1}{|x|}\right),
\quad 0< x\leq 1.
$$
We notice that $g \in L^2(B_{\mathbb{R}^2}(0,1))$ since
$$
\int_{B_{\mathbb{R}^2}(0,1)} |g(x)|^2 \, \d x= 2 \pi \int_0^1 \frac{1}{r \ln^{\frac32}\left( \frac{1}{r} \right) } \, \d r= 2\pi \int_1^{+\infty} \frac{1}{s\ln^{\frac32}(s)} \, \d s <+\infty.
$$
However, $g \notin L^p(B_{\mathbb{R}^2}(0,1))$ for any $p>2$ because
$$
\int_{B_{\mathbb{R}^2}(0,1)} |g(x)|^p \, \d x= 2 \pi \int_0^1 \frac{1}{r^{p-1} \ln^{\frac{3p}{4}} \left( \frac{1}{r} \right) } \, \d r = 2 \pi \int_1^{+\infty} \frac{1}{s^{3-p}\ln^{ \frac{3p}{4}}(s)} \, \d s =+\infty.
$$
We easily observe that $f \in L^2(B_{\mathbb{R}^2}(0,1))$, but $f \notin L^\infty(B_{\mathbb{R}^2}(0,1))$ since
$\lim_{|x|\rightarrow 0} f(x)=+\infty$.
This, in turn, implies that $f \notin W^{1,p}(B_{\mathbb{R}^2}(0,1))$ for any $p>2$, due to the Sobolev embedding theorem. Nonetheless, we have
$$
\partial_{x_i}f(x)= \left(\frac12 -\frac{1}{100}\right)  \frac{x_i}{|x|^2} \frac{1}{\ln^{\frac{1}{2}+\frac{1}{100}} \left(\frac{1}{|x|} \right)}, \quad i=1,2,
$$
such that
\begin{align*}
\int_{B_{\mathbb{R}^2}(0,1)} |\partial_{x_i}f(x)|^2 \, \d x &\leq 2\pi
\left(\frac12 -\frac{1}{100}\right)^2 \int_0^1
\frac{1}{r \ln^{2(\frac12+\frac{1}{100})} \left( \frac{1}{r}\right)}  \, \d r\\
&\leq C \int_0^1 \frac{1}{r \ln^{1+\frac{1}{50}} \left( \frac{1}{r} \right)} < + \infty.
\end{align*}
Thus, we have $f \in W^{1,2}(B_{\mathbb{R}^2}(0,1))$.
Finally, we observe that
\begin{align*}
\int_{B_{\mathbb{R}^2}(0,1)} |g(x)f(x)|^2 \, \d x &
= \int_{B_{\mathbb{R}^2}(0,1)} \frac{\ln^{1-\frac{1}{50}}
\left( \frac{1}{|x|} \right)}{|x|^2\ln^{\frac32} \left(\frac{1}{|x|}\right) } \, \d x\\
&= 2 \pi \int_0^1 \frac{1}{r \ln^{\frac12 + \frac{1}{50}} \left( \frac{1}{r} \right) } \, \d r =+\infty,
\end{align*}
namely, the product $fg \notin L^2(B_{\mathbb{R}^2}(0,1))$.
The above counterexample can be generalized to any pair of functions
$$
g(x)= \frac{1}{|x|\ln^\alpha \left(\frac{1}{|x|}\right)}, \quad f(x)=\ln^\beta \left( \frac{1}{|x|}\right),
\quad x\in B_{\mathbb{R}^2}(0,1),
$$
where $\frac12 <\alpha<1$ and $\beta< \frac12 $ such that  $\alpha-\beta<\frac{1}{2}$.
\end{remark}

\begin{remark}
The argument in the proof of Lemma \ref{result1} also implies that, for any $p\in (1,\infty)$, there exist two positive constants $\widetilde{C}_1$ and $\widetilde{C}_2$ depending only on $\Omega$ and $p$ such that
\begin{equation}
\left| \int_{\Omega} f  g \, \d x\right|\leq \widetilde{C}_1 \| f\|_{H^1(\Omega)} \| g\|_{L^1(\Omega)} \ln^\frac12 \left( \widetilde{C}_2 \frac{\| g\|_{L^p(\Omega)}}{\| g\|_{L^1(\Omega)}} \right), \quad \forall\, f\in H^1(\Omega), \ g \in L^p(\Omega).
\end{equation}
The above inequality can be compared with \cite[Theorem 1']{BB}.
\end{remark}

\section{Mass-Conserving NSAC System: Weak Solutions}
\setcounter{equation}{0}
\label{S-WEAK}

In this section, we consider the Navier-Stokes-Allen-Cahn system \eqref{Complex2} written in the following form:
\begin{equation}
 \label{NSAC-D}
\begin{cases}
\rho(\phi)\big( \partial_t \uu + \uu \cdot \nabla \uu \big)- \div \big( \nu(\phi)D\uu\big)+ \nabla P
= - \div(\nabla \phi \otimes \nabla \phi),\\
\div \uu=0,\\
\partial_t \phi +\uu\cdot \nabla \phi + \mu + \displaystyle{\rho'(\phi)\frac{|\uu|^2}{2}} =  \xi, \smallskip\\
\mu= -\Delta \phi + \Psi' (\phi), \qquad \xi= \displaystyle{\overline{\mu+ \rho'(\phi)\frac{|\uu|^2}{2}}},
\end{cases}
\quad \text{ in } \Omega \times (0,T),
\end{equation}
subject to the boundary conditions
\begin{equation}  \label{boundary-D}
\uu=\mathbf{0},\quad  \partial_{\n} \phi =0 \quad \text{ on } \partial\Omega
\times (0,T),
\end{equation}
and the initial conditions
\begin{equation}
\label{IC-D}
\uu(\cdot, 0)= \uu_0, \quad \phi(\cdot, 0)=\phi_0 \quad \text{ in } \Omega.
\end{equation}
The coefficients $\rho(\cdot)$ and $\nu(\cdot)$ represent the density and the viscosity of the fluid mixture depending on $\phi$. Let $\rho_1$, $\rho_2$ and $\nu_1$, $\nu_2$ are, respectively, the (positive) densities and viscosities of two homogeneous (different) fluids. We introduce the notations
\begin{align*}
&\rho^\ast=\max \lbrace \rho_1,\rho_2\rbrace,\qquad  \rho_\ast=\min \lbrace \rho_1,\rho_2\rbrace,\\
&\nu^\ast =\max \lbrace \nu_1,\nu_2 \rbrace,\qquad  \nu_\ast =\min \lbrace \nu_1,\nu_2 \rbrace,
\end{align*}
 Throughout this work, motivated by the linear interpolation density and viscosity functions in \eqref{rhonu}, we assume that
\begin{equation}
\label{Hp-rn}
\rho, \nu \in C^2([-1,1]): \quad \rho(s)\in [\rho_\ast, \rho^\ast], \quad  \nu(s)\in [\nu_\ast,\nu^\ast] \ \  \text{for}\ \ s\in[-1,1].
\end{equation}

The main result of this section concerns the existence of global weak solutions.
\begin{theorem}[Global weak solution]
\label{weak-D}
Let $\Omega$ be a bounded domain in $\mathbb{R}^d$, $d=2,3$, with smooth boundary $\partial\Omega$. Assume that the initial datum $(\uu_0,\phi_0)$ satisfies
$\uu_0 \in \H_\sigma, \phi_0\in H^1(\Omega)\cap L^\infty(\Omega)$ with $
\| \phi_0\|_{L^\infty(\Omega)}\leq 1$ and $ |\overline{\phi}_0|<1$ (i.e., the initial energy $E(\uu_0, \phi_0)$ is finite and the initial phase $\phi_0$ is not a pure state). Then there exists a global weak solution $(\uu,\phi)$ to problem  \eqref{NSAC-D}-\eqref{IC-D} in the following sense:
\begin{itemize}
\item[(i)] For all $T>0$, the pair $(\uu,\phi)$ satisfies
\begin{align*}
&\uu \in L^\infty(0,T;\H_\sigma)\cap L^2(0,T;\V_\sigma),\\
&\phi \in L^\infty(0,T; H^1(\Omega))\cap  L^q(0,T;H^2(\Omega)),\quad \partial_t \phi \in L^q(0,T;L^2(\Omega)), \\
&\phi \in L^\infty(\Omega\times (0,T)) : |\phi(x,t)|<1 \ \ \text{a.e. in} \ \Omega\times(0,T),\\
&\mu \in L^q(0,T;L^2(\Omega)),\quad F'(\phi)\in L^q(0,T;L^2(\Omega)),
\end{align*}
with $q=2$ if $d=2$, and $q=\frac{4}{3}$ if $d=3$.
\item[(ii)] For all $T>0$, the system \eqref{NSAC-D}  is solved as follows
\begin{align*}
&-\int_0^T\!\int_\Omega \big(\rho'(\phi) \partial_t \phi \, \eta(t)+ \rho(\phi)\, \eta'(t)\big)\, \uu\cdot \vv \, \d x \d t+ \int_0^T\!\int_\Omega \big(\rho(\phi)\uu\cdot\nabla\uu \big) \cdot \vv \, \eta(t) \, \d x\d t \\
&\qquad + \int_0^T\!\int_\Omega \nu(\phi) \big(D\uu: D \vv\big) \, \eta(t) \, \d x\d t\\
&\quad = \int_\Omega \rho(\phi_0)\uu_0 \cdot \vv\, \eta(0) \, \d x+
\int_0^T\!\int_\Omega \big((\nabla \phi \otimes \nabla \phi): \nabla \vv \big)\eta(t) \, \d x\d t,
\end{align*}
for any $\vv \in \V_\sigma$, $\eta\in C^1([0,T])$ with $\eta(T)=0$, and
\begin{align*}
&\partial_t \phi+ \uu\cdot \nabla \phi -\Delta \phi+ \Psi'(\phi)+ \displaystyle{\rho'(\phi)\frac{|\uu|^2}{2}}=\overline{\Psi'(\phi)+ \rho'(\phi)\frac{|\uu|^2}{2}}, \quad  \text{a.e. in} \ \Omega \times (0,T).
\end{align*}

\item[(iii)] The pair $(\uu,\phi)$ fulfills the regularity $\uu \in C_w([0,T];\H_\sigma)$ and $\phi \in C_w([0,T];H^1(\Omega))$, for all $T>0$. The initial conditions are satisfied such that $\uu|_{t=0}=\uu_0$, $\phi|_{t=0}=\phi_0$ in $\Omega$. In addition, the boundary condition $ \partial_{\n}\phi=0$ holds almost everywhere on $\partial\Omega\times(0,T)$ for all $T>0$.

\item[(iv)] The energy inequality
\begin{align*}
E(\uu(t), \phi(t))+\int_0^t \int_{\Omega} \nu(\phi(\tau)) |D \uu(\tau)|^2 \, \d x \d\tau + \int_0^t \|(\partial_t \phi + \uu \cdot \nabla \phi)(\tau) \|_{\L2}^2 \, \d \tau \leq E(\uu_0, \phi_0)
\end{align*}
holds for all $t \geq 0$, where $E(\uu, \phi)$ is defined as in \eqref{EEE}.
\end{itemize}
\end{theorem}\smallskip

Next, we investigate the special case with matched densities (i.e., $\rho_1=\rho_2=1$, so that $\rho\equiv 1$). The resulting model is the mass-conserving Navier-Stokes-Allen-Cahn system for homogeneous fluids
\begin{equation}
 \label{NSAC}
\begin{cases}
\partial_t \uu + \uu \cdot \nabla \uu - \div (\nu(\phi)D\uu) + \nabla p= - \div(\nabla \phi \otimes \nabla \phi),\\
\div \uu=0,\\
\partial_t \phi +\uu\cdot \nabla \phi + \mu=  \overline{\mu}, \\
\mu= -\Delta \phi + \Psi' (\phi),
\end{cases}
\quad \text{ in } \Omega \times (0,T).
\end{equation}
The system \eqref{NSAC} is associated with the following boundary and initial conditions
\begin{equation}  \label{bic}
\begin{cases}
\uu=\mathbf{0},\quad  \partial_{\n} \phi =0 \qquad \qquad \quad \text{ on } \partial\Omega
\times (0,T), \\
\uu(\cdot, 0)= \uu_0, \quad \phi(\cdot, 0)=\phi_0 \quad\, \text{ in } \Omega.
\end{cases}
\end{equation}

We first state the existence of global weak solutions, whose proof follows from similar (indeed simpler)  {\it a priori} estimates as the ones obtained for the nonhomogeneous case in the proof of Theorem  \ref{weak-D}.

\begin{theorem}[Global weak solution]
\label{W-S}
Let $\Omega$ be a bounded domain in $\mathbb{R}^d$, $d=2,3$, with smooth boundary $\partial \Omega$. Assume that the initial datum $(\uu_0,\phi_0)$ satisfies
$\uu_0 \in \H_\sigma, \phi_0\in H^1(\Omega)\cap L^\infty(\Omega)$ with $
\| \phi_0\|_{L^\infty(\Omega)}\leq 1$ and $ |\overline{\phi}_0|<1$. Then  there exists a global weak solution $(\uu,\phi)$ to problem \eqref{NSAC}-\eqref{bic}. That is, the solution $(\uu,\phi)$ satisfies, for all $T>0$,
\begin{align*}
&\uu \in L^\infty(0,T;\H_\sigma)\cap L^2(0,T;\V_\sigma),\\
&\partial_t \uu \in L^2(0,T;\V'_\sigma) \ \ \text{if} \ d=2, \quad
 \partial_t \uu \in L^\frac43(0,T;\V'_\sigma) \ \ \text{if} \ d=3,\\
&\phi \in L^\infty(0,T; H^1(\Omega))\cap  L^2(0,T;H^2(\Omega)), \\
&\phi \in L^\infty(\Omega\times (0,T)) : |\phi(x,t)|<1 \ \ \text{a.e. in } \  \Omega\times(0,T),\\
&\partial_t \phi \in L^2(0,T;L^2(\Omega)) \ \ \text{if} \ d=2, \quad
 \partial_t \phi \in L^\frac43(0,T;L^2(\Omega)) \ \ \text{if} \ d=3,
\end{align*}
and
\begin{align*}
&\l \partial_t \uu, \vv\r_{\V'_\sigma,\V_\sigma} + (\uu \cdot \nabla \uu, \vv)+ (\nu(\phi)D\uu,\nabla \vv)
= (\nabla \phi \otimes \nabla \phi, \nabla \vv),  &&\forall \, \vv \in \V_\sigma, \ \text{and a.a.} \ t \in (0,T),\\
&\partial_t \phi+ \uu\cdot \nabla \phi -\Delta \phi+ \Psi'(\phi)=\overline{\Psi'(\phi)}, && \text{for a.e.} \ (x,t) \in \Omega \times (0,T).
\end{align*}
Moreover, the initial and boundary conditions and the energy inequality hold as in Theorem \ref{weak-D}.
\end{theorem}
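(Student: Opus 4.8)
The plan is to reproduce, in the present matched-density setting, the a priori estimates of the proof of Theorem~\ref{weak-D}, which is considerably simpler here since $\rho\equiv 1$ makes the terms $\rho'(\phi)\frac{|\uu|^2}{2}$, $\rho'(\phi)\partial_t\phi$ and the $\rho(\phi)$-weight in the kinetic energy disappear. First I would regularize the singular potential: for $\varepsilon\in(0,1)$ let $\Psi_\varepsilon\in C^2(\mathbb{R})$ be obtained by extending the convex part $F$ of $\Psi$ quadratically outside $[-1+\varepsilon,1-\varepsilon]$ (or by a Yosida regularization of $F'$), so that $\Psi_\varepsilon$ is bounded below uniformly in $\varepsilon$, $\Psi_\varepsilon''\geq-\theta_0$, $\Psi_\varepsilon\to\Psi$ and $\Psi_\varepsilon'\to\Psi'$ locally uniformly on $(-1,1)$, and $\Psi_\varepsilon'$ retains the coercivity property $\Psi_\varepsilon'(s)(s-\overline{\phi}_0)\geq\kappa_1|\Psi_\varepsilon'(s)|-\kappa_2$ for all $s\in\mathbb{R}$, with $\kappa_1,\kappa_2>0$ independent of $\varepsilon$ (this is where $|\overline{\phi}_0|<1$ enters). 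Then I would build approximate solutions $(\uu_m,\phi_m)$ by a Galerkin scheme: expand $\uu_m$ in the eigenbasis of the Stokes operator $\A$, and, in order to keep the energy cancellation exact, either expand $\phi_m$ in the eigenbasis of the Neumann operator $-\Delta+I$ (projecting only the nonlinearity $\Psi_\varepsilon'(\phi_m)$) or, alternatively, solve the regularized Allen--Cahn equation $\partial_t\phi_m+\uu_m\cdot\nabla\phi_m+\mu_m-\overline{\mu_m}=0$, $\mu_m=-\Delta\phi_m+\Psi_\varepsilon'(\phi_m)$, $\partial_{\n}\phi_m=0$, exactly for the truncated velocity. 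Local solvability of the resulting ODE/parabolic system is classical, and testing the phase equation against constants gives $\overline{\phi_m}(t)\equiv\overline{\phi}_0$.

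For the estimates I would test the momentum equation with $\uu_m$ and the phase equation with $\mu_m-\overline{\mu_m}$. Using $\div\uu_m=0$ and the boundary conditions, the inertial term vanishes, $(\uu_m\cdot\nabla\phi_m,\Psi_\varepsilon'(\phi_m))=0$, and the Korteweg term $(\nabla\phi_m\otimes\nabla\phi_m,\nabla\uu_m)$ cancels $(\uu_m\cdot\nabla\phi_m,-\Delta\phi_m)$ up to the null term $\tfrac12\int_\Omega\uu_m\cdot\nabla|\nabla\phi_m|^2\,\d x$, whence the energy identity
\[
\ddt\!\Big[\tfrac12\|\uu_m\|_{\L2}^2+\tfrac12\|\nabla\phi_m\|_{\L2}^2+\!\int_\Omega\!\Psi_\varepsilon(\phi_m)\,\d x\Big]+\!\int_\Omega\!\nu(\phi_m)|D\uu_m|^2\,\d x+\|\mu_m-\overline{\mu_m}\|_{\L2}^2=0 .
\]
By Korn's inequality \eqref{KORN} and $\nu\geq\nu_\ast$ this gives uniform bounds for $\uu_m$ in $L^\infty(0,T;\H_\sigma)\cap L^2(0,T;\V_\sigma)$, for $\phi_m$ in $L^\infty(0,T;H^1(\Omega))$, for $\Psi_\varepsilon(\phi_m)$ in $L^\infty(0,T;L^1(\Omega))$, and for $\mu_m-\overline{\mu_m}=-(\partial_t\phi_m+\uu_m\cdot\nabla\phi_m)$ in $L^2(0,T;L^2(\Omega))$. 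To upgrade these I would: (a) test $\mu_m=-\Delta\phi_m+\Psi_\varepsilon'(\phi_m)$ against $\phi_m-\overline{\phi}_0$ and use the coercivity property to bound $\|\Psi_\varepsilon'(\phi_m)\|_{L^1(\Omega)}$, hence $|\overline{\mu_m}|=|\overline{\Psi_\varepsilon'(\phi_m)}|$, in $L^2(0,T)$, so $\mu_m\in L^2(0,T;L^2(\Omega))$ uniformly; (b) test the same identity against $\Psi_\varepsilon'(\phi_m)$ and use $\Psi_\varepsilon''\geq-\theta_0$ to bound $\|\Psi_\varepsilon'(\phi_m)\|_{\L2}$, then elliptic regularity for the Neumann problem to bound $\|\phi_m\|_{H^2(\Omega)}$, in $L^2(0,T)$; (c) read $\partial_t\phi_m=-\uu_m\cdot\nabla\phi_m-\mu_m+\overline{\mu_m}$ and estimate the transport term by \eqref{LADY} when $d=2$ (yielding $\partial_t\phi_m\in L^2(0,T;L^2(\Omega))$) and by \eqref{GN3} when $d=3$ (yielding $\partial_t\phi_m\in L^{4/3}(0,T;L^2(\Omega))$); (d) estimate $\partial_t\uu_m$ in $\V_\sigma'$ from the momentum equation, getting $L^2(0,T;\V_\sigma')$ for $d=2$ and $L^{4/3}(0,T;\V_\sigma')$ for $d=3$. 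All bounds are uniform in $m$ and $\varepsilon$.

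Finally I would pass to the limit $m\to\infty$ and then $\varepsilon\to0$. By the Aubin--Lions--Simon lemma, $\uu_m\to\uu$ strongly in $L^2(0,T;\H_\sigma)$ and $\phi_m\to\phi$ strongly in $L^2(0,T;H^1(\Omega))$, hence a.e.; interpolating the latter with the uniform $L^2(0,T;H^2(\Omega))$ bound gives $\nabla\phi_m\to\nabla\phi$ strongly in $L^2(0,T;\mathbf{L}^3(\Omega))$, which passes the Korteweg term to the limit, while $\nu(\phi_m)\to\nu(\phi)$ a.e.\ (bounded) together with the weak convergences of $D\uu_m$ and $\mu_m$ handles the remaining terms in the two weak formulations. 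The time-derivative bounds give $\uu\in C([0,T];(\H_\sigma)_w)$, $\phi\in C([0,T];(H^1(\Omega))_w)$ and attainment of $(\uu_0,\phi_0)$, while $\partial_{\n}\phi=0$ is inherited in the trace sense from $\partial_{\n}\phi_m=0$ and the $H^2$ bound; the energy inequality follows from the Galerkin energy identity by weak/weak-$\ast$ lower semicontinuity of the quadratic terms and of $\int_0^t\|\partial_t\phi+\uu\cdot\nabla\phi\|_{\L2}^2\,\d\tau$, together with Fatou and a.e.\ convergence for $\int_\Omega\Psi(\phi)\,\d x$. The main obstacle is the singular potential: from $\phi_m\to\phi$ a.e.\ and the uniform bound of $\Psi_\varepsilon'(\phi_m)$ in $L^2(0,T;L^2(\Omega))$ one must deduce, via Fatou, that $|\phi(x,t)|<1$ a.e.\ in $\Omega\times(0,T)$ and, via the standard monotonicity/a.e.-convergence argument for the maximal monotone graph of $F'$, that $\Psi_\varepsilon'(\phi_m)\rightharpoonup\Psi'(\phi)$ weakly in $L^2(0,T;L^2(\Omega))$ with $\Psi'(\phi)\in L^2(0,T;L^2(\Omega))$; this simultaneously identifies $\overline{\mu}=\overline{\Psi'(\phi)}$ and closes the argument.
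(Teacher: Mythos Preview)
Your proposal is correct and follows essentially the same route as the paper. The paper does not give an independent proof of Theorem~\ref{W-S}; it simply remarks that the argument is the matched-density simplification of the proof of Theorem~\ref{weak-D} (regularized potential, semi-Galerkin scheme, energy law, the $L^1$ bound on $\Psi'(\phi)$ via testing with $\phi-\overline{\phi}_0$, elliptic regularity for $\phi$, and compactness), and your outline reproduces exactly these steps. The only noteworthy difference is in the compactness of the velocity: for the nonhomogeneous Theorem~\ref{weak-D} the paper resorts to the time-translation estimate of Lemma~\ref{est-tran} because $\rho(\phi)$ obstructs a clean $\partial_t\uu$ bound, whereas in the homogeneous case you correctly bypass this by bounding $\partial_t\uu$ directly in $\V_\sigma'$ (which is precisely the regularity stated in Theorem~\ref{W-S}) and then invoking Aubin--Lions.
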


Furthermore, due to the particular form of the density function (i.e., a positive constant), we are able to prove a conditional uniqueness result in two dimensions.

\begin{theorem}[Uniqueness of weak solutions in 2D]
\label{uni2d}
Let $d=2$ and  $(\uu_1,\phi_1)$ and $(\uu_2,\phi_2)$  be two weak solutions to problem \eqref{NSAC}-\eqref{bic} on $[0,T]$ subject to the same initial datum $(\uu_0, \phi_0)$ that satisfies the assumptions of Theorem \ref{W-S}. Moreover, assume  either one of the following conditions:
 \begin{itemize}
 \item[(1)] $\phi_1$ satisfies the additional regularity such that
 $$\phi_1\in L^\gamma(0,T;H^2(\Omega))\quad \text{with}\ \gamma>\frac{12}{5};$$
\item[(2)]  the viscosity function $\nu$ satisfies
\begin{equation}
\label{vis-add}
 \inf_{c\in [\nu_\ast,\, \nu^\ast]} \max_{s \in [-1,1]} \left|  \nu(s) - c \right| < \frac{\nu_\ast}{10 \, C_\Omega^{(2)}},
 \end{equation}
where $C_\Omega^{(2)}>0 $ is given in \eqref{p}.
\end{itemize}
Then,  $(\uu_1,\phi_1)=(\uu_2,\phi_2)$ on $[0,T]$.
\end{theorem}

\begin{remark}
The existence of strong solutions obtained in Section \ref{S-STRONG} (cf. Remark \ref{strong-hom}), which yields the regularity $\phi\in L^\gamma(0,T;H^2(\Omega))$ with $\gamma>\frac{12}{5}$, entails that
Theorem \ref{uni2d} can be regarded as a weak-strong uniqueness result for problem \eqref{NSAC}-\eqref{bic} in two dimensions. That is, the weak solution originating from an initial condition $(\uu_0,\phi_0)$ such that $\uu_0\in \V_\sigma$ and $\phi_0\in H^2(\Omega)$ with $\Psi'(\phi_0)\in \L2$ coincides with the (unique) strong solution departing from the same initial datum.
\end{remark}

\begin{remark}
When the viscosity function is the linear one as in \eqref{rhonu}. Then, choosing $c= \frac{\nu_1+\nu_2}{2}$, we infer that
$$
\inf_{c\in [\nu_\ast,\, \nu^\ast]} \max_{s \in [-1,1]} \left| \nu(s) - c \right| \leq \left| \frac{\nu_1-\nu_2}{2}\right|.
$$
Without loss of generality, assume that $\nu_1<\nu_2$. Then, the second condition in Theorem \ref{uni2d} is satisfied provided that
$$
\frac{\nu_2-\nu_1}{2} < \frac{\nu_1}{10 \, C_\Omega^{(2)}} \quad \Longleftrightarrow \quad \frac{\nu_2-\nu_1}{\nu_1} < \frac{1}{5 C_{\Omega}^{(2)}}.
$$
This implies that the relative difference of the viscosities is required to be small.
\end{remark}

\subsection{Proof of Theorem \ref{weak-D}}
First, we derive \textit{a priori} estimates of problem \eqref{NSAC-D}-\eqref{IC-D} that will be crucial to prove the existence of global weak solutions. \medskip

\textbf{Mass conservation and energy dissipation.}
Integrating the equation \eqref{NSAC-D}$_3$ over $\Omega$ and using the definition of $\xi$, we observe that
$$
\int_{\Omega} \phi (t) \, \d x= \int_{\Omega} \phi_0 \, \d x, \quad \forall \, t \geq 0.
$$
Below we derive the energy equation associated with \eqref{NSAC-D}.
First, multiplying  \eqref{NSAC-D}$_1$ by $\uu$ and integrating over $\Omega$, we obtain
\begin{equation}
\label{NSAC-D1}
\int_{\Omega} \frac12  \rho(\phi) \partial_t |\uu|^2 \, \d x+ \int_{\Omega} \rho(\phi) (\uu \cdot \nabla) \uu \cdot \uu \, \d x+ \int_{\Omega} \nu(\phi) |D \uu|^2 \, \d x= -\int_{\Omega} \Delta \phi \nabla \phi \cdot \uu \, \d x.
\end{equation}
Here, we have used the relation $-\Delta \phi \nabla \phi= \frac12 \nabla |\nabla \phi|^2 -{\rm div}(\nabla \phi \otimes \nabla \phi)$ and the incompressibility condition \eqref{NSAC-D}$_2$. Thanks to the no-slip boundary condition for $\uu$, we observe that
\begin{align*}
\int_{\Omega} \rho(\phi) (\uu \cdot \nabla) \uu \cdot \uu \, \d x
&=
\int_{\Omega} \rho(\phi) \uu \cdot \nabla \left( \frac12 |\uu|^2 \right) \, \d x = - \frac12 \int_{\Omega} \div ( \rho(\phi) \uu ) |\uu|^2 \, \d x\\
&= -  \int_{\Omega} \rho'(\phi)\nabla \phi \cdot \uu  \ \frac{|\uu|^2}{2} \, \d x.
\end{align*}
Next, we multiply \eqref{NSAC-D}$_3$ by $\partial_t \phi+ \uu \cdot \nabla \phi$ and integrate over $\Omega$. Noticing that $\overline{\partial_t \phi + \uu\cdot \nabla \phi}=0$, we get
\begin{equation}
\label{NSAC-D2}
\| \partial_t \phi+ \uu \cdot \nabla \phi \|_{\L2}^2+ \int_{\Omega} \mu \,  \big( \partial_t \phi+ \uu \cdot \nabla \phi\big) \, \d x + \int_{\Omega}  \rho'(\phi) \frac{|\uu|^2}{2}\big(\partial_t \phi+ \uu \cdot \nabla \phi\big)\, \d x=0.
\end{equation}
On the other hand, the following equalities hold
\begin{align*}
&\int_{\Omega} \mu \,   \partial_t \phi  \, \d x= \ddt  \int_{\Omega}  \frac12 |\nabla \phi|^2 + \Psi(\phi) \, \d x,
\\
&\int_{\Omega} \mu \,   \uu \cdot \nabla \phi  \, \d x=
\int_{\Omega} -\Delta \phi \nabla \phi \cdot \uu \, \d x
+ \int_{\Omega} \uu \cdot \nabla \Psi(\phi) \, \d x= \int_{\Omega} -\Delta \phi \nabla \phi \cdot \uu \, \d x,\\
&
\int_{\Omega} \rho'(\phi) \frac{|\uu|^2}{2} \partial_t \phi \, \d x=
 \int_{\Omega} \partial_t (\rho(\phi))  \frac{|\uu|^2}{2} \, \d x.
\end{align*}
Thus, by adding \eqref{NSAC-D1} and \eqref{NSAC-D2}, and using the above identities, we obtain the energy equation
\begin{align}
\ddt E(\uu, \phi)+\int_{\Omega} \nu(\phi) |D \uu|^2 \, \d x + \|\partial_t \phi + \uu \cdot \nabla \phi \|_{\L2}^2=0, \quad \forall\, t>0.\label{BEL-D}
\end{align}

\medskip
\textbf{Lower-order estimates.}  Integrating \eqref{BEL-D} with respect to time, it follows that
\begin{equation}
\label{E-bound}
E(\uu(t),\phi(t))+ \int_0^t \int_{\Omega} \nu(\phi) |D \uu|^2 \, \d x \d \tau + \int_0^t \|\partial_t \phi + \uu \cdot \nabla \phi \|_{\L2}^2 \, \d \tau\leq E(\uu_0, \phi_0), \quad \forall \, t \geq 0.
 \end{equation}
We shall use the \textit{a priori} $L^\infty$-estimate
\begin{align}
\|\phi(t)\|_{L^\infty(\Omega)}\leq 1, \quad \text{for a.a}\ \ t\in [0,T],
\label{L-inf}
\end{align}
which can be recovered from the singular nature of the potential $\Psi$ (recall \eqref{Log}). Since now $\rho$ and $\nu$ are strictly positive thanks to \eqref{L-inf}, then we deduce from \eqref{E-bound} that
\begin{equation}
\label{B1-D}
\uu \in L^\infty(0,T; \H_\sigma)\cap L^2(0,T;\V_\sigma), \quad \phi\in L^\infty(0,T;H^1(\Omega))
\end{equation}
and
\begin{equation}
\label{B2-D}
\partial_t \phi + \uu \cdot \nabla \phi \in L^2(0,T; L^2(\Omega)),
\end{equation}
for all $T>0$.
In light of \eqref{B1-D} and \eqref{B2-D}, when $d=2$, we have
$$
\left\| - \rho'(\phi) |\uu|^2 \right\|_{\L2}\leq C \| \uu\|_{L^4(\Omega)}^2 \leq C \| \nabla \uu\|_{\L2},
$$
which entails that $\rho'(\phi) |\uu|^2\in L^2(0,T;\L2)$.
Instead, when $d=3$, we have
$$
\left\| - \rho'(\phi) |\uu|^2 \right\|_{\L2}\leq C \| \uu\|_{L^4(\Omega)}^2 \leq C \| \nabla \uu\|_{\L2}^\frac32,
$$
thus
$\rho'(\phi) |\uu|^2\in L^\frac43(0,T;\L2)$.
Since $\overline{\rho'(\phi) |\uu|^2} \in L^\infty(0,T)$,
we also find that
\begin{equation}
\label{B3-D}
\mu - \overline{\mu}\in L^q(0,T;L^2(\Omega)),
\end{equation}
for $q=2$ if $d=2$, and $q=\frac{4}{3}$ if $d=3$.
Thanks to the boundary condition for $\phi$, we see that $\overline{\Delta \phi}=0$. Then, multiplying \eqref{NSAC-D}$_4$ by $-\Delta \phi$ and integrating by parts, we have
$$
\int_{\Omega} |\Delta \phi|^2 + F''(\phi) |\nabla \phi|^2 \, \d x= \theta_0
\|\nabla \phi\|_{\L2}^2 - \int_{\Omega} (\mu-\overline{\mu})\Delta \phi \, \d x,
$$
where $F$ is the convex part of the potential $\Psi$, i.e.,
$$F(s)=\frac{\theta}{2}\left[ (1+s)\ln(1+s)+(1-s)\ln(1-s)\right].$$
By \eqref{B1-D} and \eqref{B3-D}, we obtain
\begin{equation}
\label{H2-D}
\| \Delta \phi \|_{\L2} \leq C \left( 1+ \| \mu-\overline{\mu}\|_{\L2} \right).
\end{equation}
Then, from the regularity theory of the Neumann problem, we infer that
\begin{equation}
\label{B4-D}
\phi \in L^q(0,T;H^2(\Omega)), \quad \forall\, T>0.
\end{equation}
From \eqref{LADY}, \eqref{GN3} and the above bounds, we get
\begin{align*}
\| \uu \cdot \nabla \phi\|_{\L2}
&\leq C \| \uu \|_{L^4(\Omega)}
  \|\nabla \phi\|_{L^4(\Omega)}\notag\\
  &\leq C \| \uu \|_{\L2}^\frac12\| \nabla \uu\|_{\L2}^\frac12  \|\nabla \phi\|_{\L2}^\frac12 \| \phi\|_{H^2(\Omega)}^\frac12\notag\\
 & \leq C \| \nabla \uu\|_{\L2}^\frac12  \|  \phi\|_{H^2(\Omega)}^\frac12,\quad \text{if}\ d=2,
\end{align*}
and
\begin{align*}
\| \uu \cdot \nabla \phi\|_{\L2}
&\leq C \| \uu \|_{L^4(\Omega)}
  \|\nabla \phi\|_{L^4(\Omega)}\notag\\
  &\leq C \| \uu \|_{\L2}^\frac14\| \nabla \uu\|_{\L2}^\frac34  \|\phi\|_{L^\infty(\Omega)}^\frac12 \| \phi\|_{H^2(\Omega)}^\frac12\notag\\
 & \leq C \| \nabla \uu\|_{\L2}^\frac34  \|  \phi\|_{H^2(\Omega)}^\frac12,\quad \text{if}\ d=3,
\end{align*}
which implies that $\uu \cdot \nabla \phi\in L^q(0,T;L^2(\Omega))$ with the corresponding choice of the index $q$. Thus, it holds
\begin{equation}
\label{B5-D}
\partial_t \phi \in L^q(0,T;L^2(\Omega)).
\end{equation}
Moreover, we observe that
\begin{align}
\| \mu-\overline{\mu}\|_{\L2}&\leq \| \partial_t \phi\|_{\L2}+ \| \uu \cdot \nabla \phi\|_{\L2}
+ \left\|-\rho'(\phi)\frac{|\uu|^2}{2}\right\|_{\L2} + |\Omega|^{-\frac12} \left\|-\rho'(\phi)\frac{|\uu|^2}{2}\right\|_{L^1(\Omega)} \notag \\
&\leq \| \partial_t \phi\|_{\L2}+ C \| \uu \|_{L^4(\Omega)}
  \|\nabla \phi\|_{L^4(\Omega)}+C \| \uu\|_{L^4(\Omega)}^2+  C \| \uu\|_{\L2}^2 \notag \\
&\leq \| \partial_t \phi\|_{\L2}+ C \| \nabla \uu\|_{\L2}^\frac12  \| \phi\|_{H^2(\Omega)}^\frac12 +C \| \uu\|_{\L2}\| \nabla \uu\|_{\L2} +  C \| \uu\|_{\L2}^2 \notag \\
&\leq  \| \partial_t \phi\|_{\L2}+ C \| \nabla \uu\|_{\L2}^\frac12  \|  \phi\|_{H^2(\Omega)}^\frac12
+C \| \nabla \uu\|_{\L2} +  C,\quad \text{if}\ d=2,
\label{mu-L2-2}
\end{align}
and in a similar manner
\begin{align}
\| \mu-\overline{\mu}\|_{\L2}&\leq
\| \partial_t \phi\|_{\L2}+ C \| \nabla \uu\|_{\L2}^\frac34  \|  \phi\|_{H^2(\Omega)}^\frac12
+C \| \nabla \uu\|_{\L2}^\frac32 +  C,\quad \text{if}\ d=3.
\label{mu-L2-3}
\end{align}
Recalling \eqref{B1-D} and \eqref{H2-D}, and using Young's inequality, we find that
\begin{equation}
\label{estH2-D}
\|\phi \|_{H^2(\Omega)}\leq C \left( 1+ \| \partial_t \phi\|_{\L2}+  \| \nabla \uu\|_{\L2} \right), \quad \text{if}\ d=2,
\end{equation}
and
\begin{equation}
\label{estH3-D}
\|\phi \|_{H^2(\Omega)}\leq C\left(1+ \| \partial_t \phi\|_{\L2}+  \| \nabla \uu\|_{\L2}^\frac32\right), \quad \text{if}\ d=3.
\end{equation}
In order to recover the full $L^2$-norm of $\mu$, we observe that
$$
\overline{\mu}=\overline{F'(\phi)}- \theta_0 \overline{\phi}.
$$
Since $|\overline{\phi}(t)|=|\overline{\phi}_0|<1$, it is well-known that (see e.g. \cite{Miranville})
$$
\int_{\Omega} |F'(\phi)| \, \d x \leq C \int_{\Omega} F'(\phi) (\phi-\overline{\phi}) \, \d x+ C,
$$
for some positive constant $C$ depending on $F$ and $\overline{\phi}$.
Multiplying \eqref{NSAC-D}$_4$ by $\phi - \overline{\phi}$ and using the homogeneous Neumann boundary condition on $\phi$, we obtain
$$
\|\nabla \phi\|_{\L2}^2+ \int_{\Omega} F'(\phi) (\phi-\overline{\phi})\, \d x = \int_{\Omega} \mu  (\phi-\overline{\phi}) \, \d x+ \int_{\Omega} \theta_0 \phi (\phi-\overline{\phi}) \, \d x.
$$
Combining the above two relations and exploiting the energy bounds in \eqref{B1-D}, we arrive at
\begin{equation}
\label{mubar}
\| F'(\phi) \|_{L^1(\Omega)} \leq C (1+ \| \mu-\overline{\mu}\|_{\L2}).
\end{equation}
Therefore, we can infer from \eqref{B4-D}, \eqref{B5-D}, \eqref{mu-L2-3} and \eqref{mubar} that
\begin{equation}
\mu \in L^q(0,T;L^2(\Omega)),\quad \forall\, T>0,
\label{muq2}
\end{equation}
where $q=2$ if $d=2$, and $q=\frac{4}{3}$ if $d=3$.
Furthermore, the above fact and \eqref{B4-D} yield
\begin{equation}
F'(\phi)\in L^q(0,T;L^2(\Omega)).\label{fpq2}
\end{equation}

\smallskip

Next, we prove the following estimate for the time translations of $\uu$:

\begin{lemma}\label{est-tran}
For any $\delta\in(0,T)$, the following bound holds
\begin{align}
\int_0^{T-\delta}\|\uu(t+\delta)-\uu(t)\|_{\L2}^2 \, \d t\leq C\delta^\frac14.\label{est-tr}
\end{align}
\end{lemma}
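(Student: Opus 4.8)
The plan is to argue on the momentum variable $\mathbf{m}=\rho(\phi)\uu$ and to use that $\rho(\phi)$ is bounded above and bounded below by $\rho_\ast>0$. The starting point is the elementary pointwise identity (obtained by expanding $\mathbf{m}=\rho(\phi)\uu$ and rearranging)
\[
\rho(\phi(t+\delta))\,|\uu(t+\delta)-\uu(t)|^2=\big(\mathbf{m}(t+\delta)-\mathbf{m}(t)\big)\cdot\big(\uu(t+\delta)-\uu(t)\big)-\big(\rho(\phi(t+\delta))-\rho(\phi(t))\big)\,\uu(t)\cdot\big(\uu(t+\delta)-\uu(t)\big).
\]
Integrating over $\Omega\times(0,T-\delta)$ and using $\rho(\phi)\ge\rho_\ast$ gives $\rho_\ast\int_0^{T-\delta}\|\uu(t+\delta)-\uu(t)\|_{\L2}^2\,\d t\le J_1-J_2$, where $J_1$ and $J_2$ denote the space--time integrals of the two terms on the right-hand side, and it remains to bound each of them by $C\delta^{1/4}$.

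The term $J_2$ is the easy one. Since $\rho\in C^2([-1,1])$ is Lipschitz and $\phi(t+\delta)-\phi(t)=\int_t^{t+\delta}\partial_s\phi(s)\,\d s$ with $\partial_t\phi\in L^q(0,T;\L2)$ by \eqref{B5-D}, one has $\|\phi(t+\delta)-\phi(t)\|_{\L2}\le\delta^{1/q'}\|\partial_t\phi\|_{L^q(t,t+\delta;\L2)}$, where $1/q'=1-1/q$ (so $\delta^{1/q'}=\delta^{1/2}$ if $d=2$ and $\delta^{1/4}$ if $d=3$). Interpolating this with $\phi\in L^\infty(0,T;H^1)$, using the Gagliardo--Nirenberg and Sobolev embeddings together with $\uu\in L^\infty(0,T;\H_\sigma)\cap L^2(0,T;\V_\sigma)$ (cf.\ \eqref{B1-D}), and applying Hölder in space and in time, I expect to reach $|J_2|\le C\delta^{1/2}\le C\delta^{1/4}$.

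The term $J_1$ is the core. For a.e.\ $t$ the increment $\boldsymbol{w}_t:=\uu(t+\delta)-\uu(t)$ belongs to $\V_\sigma$ but in general not to $\W_\sigma$, so it cannot be inserted directly in the weak formulation of \eqref{NSAC-D}$_1$; I would therefore split it through the spectral projections $P_N$ onto the first $N$ eigenfunctions of the Stokes operator $\A$, using $\|(I-P_N)\boldsymbol{w}_t\|_{\L2}\le\lambda_{N+1}^{-1/2}\|\boldsymbol{w}_t\|_{\V_\sigma}$ and $\|P_N\boldsymbol{w}_t\|_{\W_\sigma}\le\lambda_N^{1/2}\|\boldsymbol{w}_t\|_{\V_\sigma}$ ($\lambda_k$ the eigenvalues of $\A$). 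Since $\|\mathbf{m}(t+\delta)-\mathbf{m}(t)\|_{\L2}\le C$, the high-frequency part of $J_1$ is controlled by $C\lambda_{N+1}^{-1/2}\int_0^{T-\delta}\|\boldsymbol w_t\|_{\V_\sigma}\,\d t\le C\lambda_{N+1}^{-1/2}$. For the low-frequency part, writing $\mathbf{m}(t+\delta)-\mathbf{m}(t)=\int_t^{t+\delta}\partial_s\mathbf{m}(s)\,\d s$ in $\W_\sigma'$ (legitimate since $\partial_s\mathbf{m}\in L^2(0,T;\W_\sigma')$) and using the continuity-type identity $\partial_s\rho(\phi)+\div(\rho(\phi)\uu)=\rho'(\phi)\,h$ with $h:=\partial_s\phi+\uu\cdot\nabla\phi\in L^2(0,T;\L2)$ (cf.\ \eqref{B2-D}), equation \eqref{NSAC-D}$_1$ tested against $\boldsymbol z\in\W_\sigma$ (the pressure gradient drops) reads
\[
\langle\partial_s\mathbf{m}(s),\boldsymbol z\rangle=\int_\Omega\big(\rho(\phi)\uu\otimes\uu+\nabla\phi\otimes\nabla\phi\big):\nabla\boldsymbol z\,\d x-\int_\Omega\nu(\phi)D\uu:D\boldsymbol z\,\d x+\int_\Omega\rho'(\phi)\,h\,\uu\cdot\boldsymbol z\,\d x.
\]
Taking $\boldsymbol z=P_N\boldsymbol w_t$ and integrating over $s\in(t,t+\delta)$ turns the low-frequency part of $J_1$ into a sum of four space--time integrals, which I would estimate by Hölder in space and time using \eqref{B1-D}, \eqref{B4-D} and \eqref{mubar}--\eqref{fpq2}. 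The convective, viscous and $\rho'h\uu$ terms are handled with $\|\nabla P_N\boldsymbol w_t\|_{\L2}\le\|\boldsymbol w_t\|_{\V_\sigma}$ (no loss in $N$) and each carries a power of $\delta$ no worse than $\delta^{1/4}$. The capillary term $\int(\nabla\phi\otimes\nabla\phi):\nabla P_N\boldsymbol w_t$ is the delicate one, and the only place where the extra regularity of $P_N\boldsymbol w_t$ is genuinely used: in $d=3$ one only has $\phi\in L^{4/3}(0,T;H^2(\Omega))$ (the exponent $4/3$ being forced by $\rho'(\phi)|\uu|^2\in L^{4/3}(0,T;\L2)$, cf.\ \eqref{B3-D}), hence $\nabla\phi\otimes\nabla\phi\in L^{8/3}(0,T;L^{6/5}(\Omega))$ only, and pairing it with $\nabla P_N\boldsymbol w_t\in L^6(\Omega)$ costs the factor $\|P_N\boldsymbol w_t\|_{\W_\sigma}\le\lambda_N^{1/2}\|\boldsymbol w_t\|_{\V_\sigma}$, producing a bound of the form $C\lambda_N^{1/2}\delta^{\beta}$ with $\beta>0$. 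Altogether $|J_1|\le C\big(\lambda_{N+1}^{-1/2}+\lambda_N^{1/2}\delta^{\beta}+\delta^{1/4}\big)$, and choosing $N=N(\delta)$ so that $\lambda_N\approx\delta^{-\beta}$ balances the first two contributions and, after bookkeeping of the exponents, gives $|J_1|\le C\delta^{1/4}$. Combined with the bound on $J_2$ and division by $\rho_\ast$, this yields \eqref{est-tr}. (As usual, the computations are first carried out on the Galerkin approximations, where they are rigorous and the constants uniform, and then transferred to $(\uu,\phi)$.)

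I expect the main obstacle to be exactly the capillary term in three dimensions: with unmatched densities ($\rho'\not\equiv0$) the mass-conserving Allen--Cahn relaxation produces only $\phi\in L^{4/3}(0,T;H^2)$, so $\nabla\phi\otimes\nabla\phi$ fails to be in $L^1(0,T;\L2)$ and cannot be tested against the merely $L^2(0,T;\V_\sigma)$-bounded increment $\uu(t+\delta)-\uu(t)$; the Stokes-spectral truncation converts this defect into a power of $\lambda_N$, and the resulting optimization in $N$ is what forces the fractional exponent $\delta^{1/4}$, rather than the $\delta^{1/2}$ available for $d=2$ or for matched densities. A secondary technical nuisance is the careful tracking of the time-integrability exponents in the four terms so that every Hölder-in-$t$ step is admissible.
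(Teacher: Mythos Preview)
Your overall strategy --- the Lions-type pointwise identity on the momentum $\mathbf{m}=\rho(\phi)\uu$, followed by testing the weak formulation against the increment $\boldsymbol w_t=\uu(t+\delta)-\uu(t)$ --- is exactly the paper's approach. But your handling of the capillary term is an unnecessary detour, and the diagnosis behind it is incorrect. You assert that in three dimensions $\nabla\phi\otimes\nabla\phi$ fails to lie in $L^1(0,T;\L2)$, which forces you to introduce the Stokes-spectral truncation $P_N$ and optimize over $N$. This overlooks the physical bound $|\phi|\le 1$ (guaranteed by the singular Flory--Huggins potential): the Gagliardo--Nirenberg inequality \eqref{GN-L4} gives
\[
\|\nabla\phi\|_{L^4(\Omega)}^2\le C\|\phi\|_{H^2(\Omega)}\|\phi\|_{L^\infty(\Omega)}\le C\|\phi\|_{H^2(\Omega)},
\]
so in fact $\nabla\phi\otimes\nabla\phi\in L^{4/3}(0,T;\L2)\subset L^1(0,T;\L2)$. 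The paper exploits this directly: since $\boldsymbol w_t\in\V_\sigma$ is an admissible test function, the capillary contribution is simply
\[
\Big|\int_t^{t+\delta}\!\!\int_\Omega(\nabla\phi\otimes\nabla\phi):\nabla\boldsymbol w_t\,\d x\,\d\tau\Big|
\le C\int_t^{t+\delta}\!\|\phi(\tau)\|_{H^2}\,\d\tau\cdot\|\nabla\boldsymbol w_t\|_{\L2}
\le C\delta^{1/4}\Big(\int_0^T\!\|\phi\|_{H^2}^{4/3}\Big)^{3/4}\|\nabla\boldsymbol w_t\|_{\L2},
\]
and a further H\"older in $t$ against $\|\nabla\boldsymbol w_t\|_{\L2}\in L^2(0,T-\delta)$ closes the estimate. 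No truncation is needed.

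Your spectral route might still close after the bookkeeping, but it obscures the mechanism: the exponent $\delta^{1/4}$ does not come from balancing high and low Stokes modes, but simply from the H\"older pairing of $\|\phi\|_{H^2}\in L^{4/3}_t$ (and likewise $\|\partial_t\phi\|_{\L2}\in L^{4/3}_t$) against the $\delta$-interval. The paper's five terms $J_1,\dots,J_5$ are all treated by this direct strategy, with the two terms involving $\partial_t\phi$ also producing $\delta^{1/4}$ and the purely convective/viscous ones producing $\delta^{1/2}$.
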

\begin{proof}
We only present the proof for the case $d=3$.
The case $d=2$ follows along the same lines.
It follows from \eqref{B1-D} and the interpolation inequality \eqref{GN3} with $p=3$ that $\uu\in L^4(0,T;L^3(\Omega))$. Similar to \cite{Lions} (see also \cite[Lemma 3.5]{JLL2017}), we have
\begin{align*}
&\left\|\sqrt{\rho(\phi(t+\delta))}(\uu(t+\delta)-\uu(t))\right\|_{\L2}^2 \\
&\qquad  \leq -\int_\Omega(\rho(\phi(t+\delta))-\rho(\phi(t)))\uu(t)\cdot (\uu(t+\delta)-\uu(t)) \, \d x\\
&\qquad\quad  -\int_{t}^{t+\delta} \!\int_\Omega \rho(\phi(\tau))(\uu(\tau)\cdot\nabla)\uu(\tau)\cdot (\uu(t+\delta)-\uu(t))\, \d x\d \tau\\
&\qquad\quad  -\int_t^{t+\delta}\!\int_\Omega \nu(\phi(\tau))D\uu(\tau):D(\uu(t+\delta)-\uu(t))\, \d x\d \tau\\
&\qquad\quad  +\int_t^{t+\delta}\!\int_\Omega (\nabla \phi(\tau)\otimes\nabla \phi(\tau)): \nabla (\uu(t+\delta)-\uu(t)) \, \d x\d \tau\\
&\qquad\quad  +\int_t^{t+\delta}\!\int_\Omega \rho'(\phi)\partial_\tau \phi(\tau)\uu(\tau)\cdot (\uu(t+\delta)-\uu(t))\, \d x\d \tau\\
&\qquad  =: \sum_{i=1}^{5}J_i.
\end{align*}
First, we observe that
\begin{align}
\int_0^{T-\delta} J_1(t)\, \d t
&\leq \int_0^{T-\delta}\!\int_{t}^{t+\delta}\!\int_\Omega
|\rho'(\phi)||\partial_\tau\phi(\tau)||\uu(t)| \left( |\uu(t+\delta)|+|\uu(t)| \right)\, \d x\d\tau\d t\nonumber\\
&\leq \int_0^{T-\delta}(\|\uu(t+\delta)\|_{L^3(\Omega)}+\|\uu(t)\|_{L^3(\Omega)})
\|\uu(t)\|_{L^6(\Omega)}\int_t^{t+\delta}\| \partial_\tau\phi(\tau)\|_{L^2(\Omega)} \, \d \tau \d t\nonumber\\
&\leq C\delta^\frac14\left(\int_0^T \|\nabla \uu(t)\|_{L^2(\Omega)} \, \d t\right) \left(\int_0^{T}\| \partial_t\phi(t)\|_{L^2(\Omega)}^\frac43\d t\right)^\frac34 \leq C\delta^\frac14,\nonumber
\end{align}
and, in a similar manner,
\begin{align}
\int_0^{T-\delta} J_5(t) \, \d t
&\leq \int_0^{T-\delta}(\|\uu(t+\delta)\|_{L^3(\Omega)}+\|\uu(t)\|_{L^3(\Omega)})
\int_t^{t+\delta}\|\uu(\tau)\|_{L^6(\Omega)}\| \partial_\tau\phi(\tau)\|_{L^2(\Omega)} \, \d \tau \d t\nonumber\\
&\leq C\delta^\frac14\left(\int_0^T \|\nabla \uu(t)\|_{L^2(\Omega)}\, \d t\right) \left(\int_0^{T}\| \partial_t\phi(t)\|_{L^2(\Omega)}^\frac43\d t\right)^\frac34
\leq C\delta^\frac14.\nonumber
\end{align}
Next, we have
\begin{align}
&\int_0^{T-\delta} J_2(t) \, \d t\nonumber\\
&\quad \leq \int_0^{T-\delta}\!\!\int_{t}^{t+\delta} \| \rho(\phi(\tau))\|_{L^\infty(\Omega)}\|\uu(\tau)\|_{L^6(\Omega)}
\|\nabla\uu(\tau)\|_{L^2(\Omega)} \, \d \tau  \big(\|\uu(t+\delta)\|_{L^3(\Omega)}+\|\uu(t)\|_{L^3(\Omega)}\big)\, \d t\nonumber\\
&\quad \leq C\delta^\frac12\int_0^{T-\delta}\left(\int_t^{t+\delta}
\|\nabla\uu(\tau)\|_{L^2(\Omega)}^2 \, \d \tau\right)^\frac12\big(\|\uu(t+\delta)\|_{L^3(\Omega)}+\|\uu(t)\|_{L^3(\Omega)}\big) \, \d t\nonumber\\
&\quad \leq C\delta^\frac12\left(\int_0^T
\|\nabla\uu(t)\|_{L^2(\Omega)}^2 \, \d t\right)^\frac12\int_0^T\|\uu(t)\|_{L^3(\Omega)} \, \d t \leq C\delta^\frac12,\nonumber
\end{align}
and
\begin{align}
\int_0^{T-\delta} J_3(t) \, \d t\nonumber
& \leq
\int_0^{T-\delta} \int_t^{t+\delta} \|\nu(\phi(\tau))\|_{L^\infty(\Omega)}\|D\uu(\tau)\|_{L^2(\Omega)} \, \d \tau
\big(\|D\uu(t+\delta)\|_{\L2}+\|D\uu(t)\|_{\L2}\big)\, \d t\nonumber\\
& \leq C\delta^\frac12 \int_0^{T-\delta}
\left(\int_t^{t+\delta}
\|\nabla \uu(\tau)\|_{L^2(\Omega)}^2 \, \d \tau\right)^\frac12 \big(\|D\uu(t+\delta)\|_{\L2}+\|D\uu(t)\|_{\L2}\big) \, \d t\nonumber\\
& \leq C\delta^\frac12
\left(\int_0^{T}
\|\nabla \uu(t)\|_{L^2(\Omega)}^2 \, \d t\right)^\frac12 \int_0^{T} \|\nabla \uu(t)\|_{\L2}\, \d t  \leq C\delta^\frac12.\nonumber
\end{align}
Finally, by using \eqref{GN-L4} we get
\begin{align*}
\int_0^{T-\delta} J_4(t) \, \d t &\leq \int_0^{T-\delta}\! \int_t^{t+\delta} \|\nabla \phi(\tau)\|_{L^4(\Omega)}^2 \, \d \tau (\|\nabla \uu(t+\delta)\|_{\L2} +\|\nabla \uu(t)\|_{\L2}) \, \d t\nonumber\\
&\leq C\delta^\frac14 \int_0^{T-\delta} \left(\int_t^{t+\delta} \| \phi(\tau)\|_{H^2(\Omega)}^\frac43 \, \d \tau\right)^\frac34 (\|\nabla \uu(t+\delta)\|_{\L2} +\|\nabla \uu(t)\|_{\L2})\, \d t\nonumber\\
& \leq C\delta^\frac14\left(\int_0^{T} \| \phi(t)\|_{H^2(\Omega)}^\frac43 \, \d t\right)^\frac34 \int_0^T \|\nabla \uu(t)\|_{\L2} \, \d t \leq C\delta^\frac14.
\end{align*}
From the above estimates and the fact that $\rho$ is strictly bounded from below, we obtain the conclusion \eqref{est-tr}. The proof is complete.
\end{proof}
\medskip

\textbf{Existence of weak solutions.} With the above \textit{a priori} estimates, we are able to prove the existence of a global weak solution by using a semi-Galerkin scheme similar to that in \cite{JLL2017} (see also \cite{W17}).

More precisely, for any $n\in \mathbb{N}$, we find a local-in-time approximating solution $(\uu_n, \phi_n)$ where $\uu_n$ solves \eqref{NSAC-D}$_1$ as in the classical Faedo-Galerkin approximation (projected on the $n$-dimensional space spanned by the eigenfunctions of the Stokes operator) and $\phi_n$ is the (non-discrete) solution to the equations \eqref{NSAC-D}$_3$-\eqref{NSAC-D}$_4$ with the velocity $\uu_n$, the singular potential and the nonlocal term. This goal can be achieved by a Schauder fixed point argument. To complete the proof, it is necessary to solve separately a convective nonlocal Allen-Cahn equation with a given velocity field $\vv_n$ (in the same finite dimensional space as $\uu_n$). To this aim, we introduce a family of regular potentials
$\lbrace \Psi_\varepsilon \rbrace$ that approximates the original singular potential
$\Psi$ by setting (see, e.g., \cite{GGW2018})
\begin{equation}
\Psi_\varepsilon(s)=F_\varepsilon(s)-\frac{\theta_0}{2}s^2,\quad \forall\, s\in \mathbb{R},\nonumber
\end{equation}
where
\begin{equation}
F_\varepsilon(s)=
\begin{cases}
 \displaystyle{\sum_{j=0}^2 \frac{1}{j!}}
 F^{(j)}(1-\varepsilon) \left[s-(1-\varepsilon)\right]^j,
 \qquad\qquad \!\!  \forall\,s\geq 1-\varepsilon,\\
 F(s), \qquad \qquad \qquad \qquad \qquad \qquad \qquad \quad
 \forall\, s\in[-1+\varepsilon, 1-\varepsilon],\\
 \displaystyle{\sum_{j=0}^2
 \frac{1}{j!}} F^{(j)}(-1+\varepsilon)\left[ s-(-1+\varepsilon)\right]^j,
 \qquad\ \forall\, s\leq -1+\varepsilon.
 \end{cases}
 \nonumber
\end{equation}
Substituting the regular potential $\Psi_\varepsilon$ into the original Allen-Cahn equation \eqref{NSAC-D}$_3$-\eqref{NSAC-D}$_4$, we are able to prove the existence of an approximating solution $\phi_\varepsilon$ to the resulting regularized equation using e.g., the semigroup approach like in \cite[Lemma 3.2]{JLL2017} or simply by the Galerkin method. For the approximating solution $\phi_{\varepsilon}$, we can derive estimates that are uniform in $\varepsilon$ and then pass to the limit as $\varepsilon\to 0$ to recover the case with singular potential. It is worth mentioning that in this procedure, thanks to the singular potential $\Psi$, we can show the limit function satisfies
$$
\widetilde{\phi} \in L^\infty(\Omega\times (0,T))\quad \text{and}\quad |\widetilde{\phi}(x,t)|<1\ \ \text{a.e. in}\ \Omega\times(0,T),
$$
using a similar argument like in \cite{GGW2018,GMT2019}, without the additional assumption $s\rho'(s)\geq 0$ for $|s|>1$ that was required in \cite{JLL2017}. This together with the spatial regularity of $\widetilde{\phi}$ (e.g., from the energy estimates) implies that  $\|\widetilde{\phi}(t)\|_{L^\infty(\Omega)}\leq 1$ for almost any  $t\in[0,T]$ (cf. \eqref{L-inf}). Inserting the function $\widetilde{\phi}=\widetilde{\phi}[\vv_n]$ back into the Galerkin approximation equation for $\uu_n$, we can thus define a mapping $\uu_n=\mathcal{T}[\vv_n]$ in a suitable finite dimensional space. Then, by means of the classical Schauder's argument, it is possible to prove that $\mathcal{T}$ admits a fixed point defined on a certain time interval. This  gives the local approximating solution $(\uu_n, \phi_n)$.

Next, thanks to the {\it a priori} estimates showed above, it follows that the existence time interval of any approximating solution $(\uu_n, \phi_n)$ is independent of the parameter $n$. From the same argument, we deduce uniform estimates that allows sufficient compactness for the phase function $\phi_n$. Then, the key issue is to obtain uniform estimates of translations $\int_0^{T-\delta} \|\uu_n(t+\delta)-\uu_n(t)\|_{L^2(\Omega)}^2\, \d t$ (see Lemma \ref{est-tran}) that yields compactness of the velocity field in the case of unmatched densities (cf. \cite{Lions}). Since the above two-level approximating procedure is standard, we omit the details here.
\medskip

\textbf{Time continuity and the initial condition.} We first observe that the regularity properties \eqref{B1-D} and \eqref{B5-D}, together with the global bound $\| \phi\|_{L^\infty(0,T;L^\infty(\Omega))}\leq 1$, entail that
$$
\phi \in C([0,T]; L^p(\Omega)), \quad \forall\, 2\leq p<\infty\ \ \text{if}\ d=2,3.
$$
In addition, since $\phi \in L^\infty(0,T;H^1(\Omega))$, we also infer from \cite[Theorem 2.1]{STRAUSS} that $\phi \in C_w([0,T];H^1(\Omega))$. If $d=2$, since $\phi \in L^2(0,T;H^2(\Omega))\cap W^{1,2}(0,T;L^2(\Omega))$, we can further deduce that $\phi \in C([0,T];H^1(\Omega))$.

Next, the weak formulation of \eqref{NSAC-D}$_1$-\eqref{NSAC-D}$_2$ can be written as
\begin{align*}
\ddt \l \mathbb{P}(\rho(\phi)\uu), \vv \r_{\V_\sigma',\V_\sigma}=\l\, \widetilde{\f}, \vv\r_{\V_\sigma',\V_\sigma},
\end{align*}
for all $\vv \in \V_\sigma$, in the sense of distribution on $(0,T)$, where $\mathbb{P}$ is the Leray projection onto $\H_\sigma$ and
$$
\l\, \widetilde{\f}, \vv\r_{\V_\sigma',\V_\sigma} = (\rho'(\phi)\partial_t \phi \, \uu, \vv)-(\rho(\phi)(\uu \cdot\nabla) \uu, \vv)-
(\nu(\phi) D \uu, \nabla \vv)+ (\nabla \phi \otimes \nabla \phi, \nabla \vv).
$$
Arguing similarly to the proof of Lemma \ref{est-tran}, we observe that
\begin{align*}
\|\, \widetilde{\f}\|_{\V_\sigma'}
&\leq C \| \partial_t \phi\|_{L^2(\Omega)} \| \uu\|_{L^3(\Omega)}+ C \| \uu\|_{L^3(\Omega)} \| D \uu\|_{\L2}+ C \| D \uu\|_{\L2}+ C \| \nabla \phi\|_{L^4(\Omega)}^2\\
&\leq C \| \partial_t \phi\|_{L^2(\Omega)} \| D \uu\|_{\L2}^\frac12 + C \| D\uu\|_{\L2}^{\frac32}+ \| D \uu\|_{\L2} + C \| \phi\|_{H^2(\Omega)}\\
&\leq C \left( 1+ \| \partial_t \phi\|_{L^2(\Omega)}^\frac43 + \| D \uu\|_{\L2}^2+C \| \phi\|_{H^2(\Omega)}^\frac43 \right).
\end{align*}
In light of the regularity of the weak solution, we find that $\widetilde{\f} \in L^1(0,T;\V_\sigma')$. By definition of the weak time derivative, this implies that $$
\partial_t \mathbb{P}(\rho(\phi)\uu)\in L^1(0,T;\V_\sigma').$$
Observing that $\mathbb{P} (\rho(\phi)\uu) \in L^\infty(0,T;\H_\sigma)$, we have  $\mathbb{P} (\rho(\phi)\uu) \in C([0,T];\V_\sigma')$. As a consequence, we deduce from \cite[Theorem 2.1]{STRAUSS} that $\mathbb{P} (\rho(\phi)\uu) \in C_w([0,T]; \H_\sigma)$. Besides, it easily follows from the properties of the Leray operator $\mathbb{P}$ that $\mathbb{P} (\rho(\phi)\uu) \in C_w([0,T]; \mathbf{L}^2(\Omega))$. Now, repeating the argument in \cite[Section 5.2]{ADG2013}, we deduce that $\rho(\phi)\uu \in C_w([0,T]; \mathbf{L}^2(\Omega))$. Therefore, since $\rho(\phi) \in C([0,T];L^2(\Omega))$ and $\rho(\phi)\geq \rho_\ast>0$, we can conclude that $\uu \in C_w([0,T];\mathbf{L}^2(\Omega))$. Thanks to the time continuity of $\uu$ and $\phi$, a standard argument ensures that the initial conditions $\uu|_{t=0}=\uu_0$, $\phi|_{t=0}=\phi_0$ are satisfied in $\Omega$.

Proof of Theorem \ref{weak-D} is complete.
\hfill$\square$
\smallskip

\subsection{Proof of Theorem \ref{uni2d}}
Let us consider two global weak solutions $(\uu_1,\phi_1)$ and $(\uu_2,\phi_2)$ to problem \eqref{NSAC}-\eqref{bic} given by Theorem \ref{W-S}. Denote the differences of solutions by $$\uu=\uu_1-\uu_2,\quad \phi=\phi_1-\phi_2.$$
Then we have
\begin{align}
& \l \partial_t \uu, \vv\r_{\V_\sigma',\V_\sigma} + (\uu_1 \cdot \nabla \uu, \vv) + (\uu \cdot \nabla \uu_2, \vv)+ (\nu(\phi_1)D\uu,\nabla \vv) + ((\nu(\phi_1)-\nu(\phi_2))D\uu_2,\nabla \vv) \notag  \\
&\quad = (\nabla \phi_1 \otimes \nabla \phi, \nabla \vv)+
(\nabla \phi \otimes \nabla \phi_2, \nabla \vv),
 \label{NS-diff}
\end{align}
for all $ \vv \in \V_\sigma$, almost every $t \in (0,T)$, and
\begin{equation}
\partial_t \phi+ \uu_1\cdot \nabla \phi + \uu \cdot \nabla \phi_2-\Delta \phi+ \Psi'(\phi_1)-\Psi'(\phi_2)=\overline{\Psi'(\phi_1)}-\overline{\Psi'(\phi_2)}  \label{AC-diff}
\end{equation}
for almost every $(x,t) \in \Omega \times (0,T)$.

Following the same strategy as in \cite{GMT2019}, we take $\vv= \A^{-1}\uu$, where $\A$ is the Stokes operator, and we find that
\begin{align*}
&\frac12 \ddt \| \uu\|_{\ast}^2 + (\nu(\phi_1) D\uu, \nabla \A^{-1}\uu)\\
&\quad  =
(\uu\otimes \uu_1, \nabla \A^{-1}\uu)+ (\uu_2\otimes \uu, \nabla \A^{-1}\uu)\\
&\qquad - ((\nu(\phi_1)-\nu(\phi_2))D\uu_2,\nabla \A^{-1}\uu)
 + (\nabla \phi_1 \otimes \nabla \phi, \nabla \A^{-1}\uu)\\
 &\qquad
+(\nabla \phi \otimes \nabla \phi_2, \nabla \A^{-1}\uu),
\end{align*}
where $\| \uu\|_{\ast}= \| \nabla \A^{-1} \uu\|_{\L2}$, which is a norm on $\V'_\sigma$ equivalent to the usual\ dual norm. Here, we have used the equality $\uu_i \cdot \nabla \uu= \div ( \uu\otimes \uu_i)$, $i=1,2$,
due to the incompressibility condition.
Beside, multiplying \eqref{AC-diff} by $\phi$, integrating over $\Omega$ and observing that
\begin{align*}
& \int_{\Omega} (\uu_1 \cdot \nabla \phi) \, \phi \,\d x= \int_{\Omega} \uu_1 \cdot \left(\frac12 \nabla \phi^2 \right) \,\d x=0, \\
& \int_{\Omega} (\overline{\Psi'(\phi_1)}-\overline{\Psi'(\phi_2)} ) \phi \, \d x= (\overline{\Psi'(\phi_1)}-\overline{\Psi'(\phi_2)} ) \overline{\phi}|\Omega| =0,
\end{align*}
we obtain
$$
\frac12 \ddt \|\phi\|_{\L2}^2+ \| \nabla \phi\|_{\L2}^2+ \int_{\Omega} (\uu \cdot \nabla \phi_2) \, \phi \, \d x+ \int_{\Omega} (F'(\phi_1)-F'(\phi_2)) \, \phi \, \d x=
\theta_0 \| \phi\|_{\L2}^2.
$$
By adding the above two equations and using the convexity of $F$, we deduce that
\begin{align}
& \ddt G(t) + (\nu(\phi_1) D\uu, \nabla \A^{-1}\uu) +  \| \nabla \phi\|_{\L2}^2  \notag \\
&\quad \leq (\uu\otimes \uu_1, \nabla \A^{-1}\uu)+ (\uu_2\otimes \uu, \nabla \A^{-1}\uu)
- ((\nu(\phi_1)-\nu(\phi_2))D\uu_2,\nabla \A^{-1}\uu) \notag \\
 &\qquad + (\nabla \phi_1 \otimes \nabla \phi, \nabla \A^{-1}\uu)
+(\nabla \phi \otimes \nabla \phi_2, \nabla \A^{-1}\uu)- (\uu \cdot \nabla \phi_2 , \phi) + \theta_0 \| \phi\|_{\L2}^2,
\label{u-est1}
\end{align}
where
$$
G(t)= \frac12 \| \uu(t)\|_{\ast}^2+ \frac12 \|\phi(t)\|_{\L2}^2.
$$

In order to recover an $L^2(\Omega)$-norm of $\uu$, which is a key term to control the nonlinear terms on the right-hand side, we obtain by integration by parts such that (see \cite[(3.9)]{GMT2019})
\begin{align*}
 (\nu(\phi_1) D\uu, \nabla \A^{-1}\uu)&= (\nabla \uu, \nu(\phi_1)D \A^{-1}\uu)\\
 &=- (\uu, \div  (\nu(\phi_1)D \A^{-1}\uu )\\
 &=- (\uu, \nu'(\phi_1) D \A^{-1}\uu \nabla \phi_1) - \frac12 (\uu, \nu(\phi_1) \Delta \A^{-1}\uu).
\end{align*}
Here we have used the fact that $\div \nabla^t \vv= \nabla \div \vv$.
Notice that, by the definition of the Stokes operator,
there exists a scalar function $p\in L^\infty(0,T;H^1(\Omega))\cap L^2(0,T;H^2(\Omega))$ (unique up to a constant) such that
$$
-\Delta \A^{-1} \uu+ \nabla p= \uu\qquad \text{for almost every}\ (x,t)\in \Omega \times (0,T).
$$
Moreover, we have the following estimates from \cite{Galdi} and \cite[Appendix B]{GMT2019}
\begin{equation}
\label{p}
\begin{cases}
\| p\|_{\L2}\leq  C_\Omega^{(1)}    \| \nabla \A^{-1} \uu \|_{\L2}^{\frac12} \| \uu\|_{\L2}^{\frac12}, \smallskip \\
\| p \|_{H^1(\Omega)}\leq C_\Omega^{(2)} \|\uu \|_{\L2}, \smallskip \\
\| p\|_{H^2(\Omega)}\leq C_\Omega^{(3)}  \| \nabla \uu\|_{\L2},
\end{cases}
\end{equation}
 where the positive  constants $C_\Omega^{(1)}, C_\Omega^{(2)}$ and $C_\Omega^{(3)}$ only depend on $\Omega$. Then we can write
\begin{align*}
- \frac12 (\uu, \nu(\phi_1) \Delta A^{-1}\uu)&= \frac12 ( \uu, \nu(\phi_1) \uu ) -\frac12 (\uu, \nu(\phi_1) \nabla p)\\
&=  \frac12 ( \uu, \nu(\phi_1) \uu ) + \frac12 (\div (\nu(\phi_1)\uu ), p)\\
&= \frac12 ( \uu, \nu(\phi_1) \uu ) + \frac12 (\nu'(\phi_1) \nabla \phi_1 \cdot \uu, p).
\end{align*}
Recalling that $\nu(\cdot)\geq \nu_\ast>0$, we then get
\begin{align*}
(\nu(\phi_1) D\uu, \nabla \A^{-1}\uu) \geq \frac{\nu_\ast}{2} \| \uu\|_{\L2}^2+
 \frac12 (\nu'(\phi_1) \nabla \phi_1 \cdot \uu, p)- (\uu, \nu'(\phi_1) D \A^{-1}\uu \nabla \phi_1).
\end{align*}
Owing to the above estimate, we rewrite the differential inequality \eqref{u-est1} as follows
\begin{align}
&\ddt G(t) + \frac{\nu_\ast}{2} \| \uu\|_{\L2}^2 +  \| \nabla \phi\|_{\L2}^2 \notag \\
&\quad   \leq \, (\uu\otimes \uu_1, \nabla \A^{-1}\uu)+ (\uu_2\otimes \uu, \nabla \A^{-1}\uu)
+ ((\nu(\phi_1)-\nu(\phi_2))D\uu_2,\nabla \A^{-1}\uu) \notag \\
&\qquad + (\nabla \phi_1 \otimes \nabla \phi, \nabla \A^{-1}\uu)
+(\nabla \phi \otimes \nabla \phi_2, \nabla \A^{-1}\uu)- (\uu \cdot \nabla \phi_2 , \phi)  \notag \\
&\qquad + \theta_0 \| \phi\|_{\L2}^2
+ (\uu, \nu'(\phi_1) D \A^{-1}\uu \nabla \phi_1)-\frac12 (\nu'(\phi_1) \nabla \phi_1 \cdot \uu, p).
\label{u-est2}
\end{align}
By the Ladyzhenskaya inequality \eqref{LADY}, together with \eqref{H2equiv} and the bounds for weak solutions, we have
\begin{align*}
& (\uu\otimes \uu_1, \nabla \A^{-1}\uu)+ (\uu_2\otimes \uu, \nabla \A^{-1}\uu)\\
&\quad   \leq \| \uu\|_{\L2} \left( \| \uu_1\|_{L^4(\Omega)}+  \| \uu_2\|_{L^4(\Omega)}\right)
\| \nabla \A^{-1}\uu\|_{L^4(\Omega)}\\
&\quad  \leq C \left(\| \uu_1\|_{H^1(\Omega)}^\frac12+ \| \uu_2\|_{H^1(\Omega)}^\frac12 \right) \| \uu\|_{\L2}^\frac32 \|\uu \|_{\ast}^\frac12\\
&\quad   \leq \frac{\nu_\ast}{20} \| \uu\|_{\L2}^2 + C \left(\| \uu_1\|_{H^1(\Omega)}^2+ \| \uu_2\|_{H^1(\Omega)}^2 \right) \|\uu \|_{\ast}^2.
\end{align*}
In a similar manner, we obtain
\begin{align*}
&(\nabla \phi_1 \otimes \nabla \phi, \nabla \A^{-1}\uu)
+(\nabla \phi \otimes \nabla \phi_2, \nabla \A^{-1}\uu)\\
&\quad \leq \left( \| \nabla \phi_1\|_{L^4(\Omega)}+  \| \nabla \phi_2\|_{L^4(\Omega)}\right)
\| \nabla \phi\|_{\L2}
\| \nabla \A^{-1}\uu\|_{L^4(\Omega)}\\
&\quad \leq C \left(\| \phi_1\|_{H^2(\Omega)}^\frac12+ \| \phi_2\|_{H^2(\Omega)}^\frac12 \right) \| \nabla \phi\|_{\L2} \|\uu \|_{\L2}^\frac12 \|\uu \|_{\ast}^\frac12\\
&\quad \leq \frac{\nu_\ast}{20} \| \uu\|_{\L2}^2 + \frac{1}{12} \| \nabla \phi\|_{\L2}^2+ C \big(\| \phi_1\|_{H^2(\Omega)}^2+ \| \phi_2\|_{H^2(\Omega)}^2) \|\uu \|_{\ast}^2,
\end{align*}
as well as
\begin{align*}
(\uu \cdot \nabla \phi_2, \phi)
&\leq \|\uu \|_{\L2} \| \nabla \phi_2 \|_{L^4(\Omega)} \|\phi \|_{L^4(\Omega)}\\
&\leq C \|\uu \|_{\L2} \| \nabla \phi_2 \|_{H^1(\Omega)}^\frac12 \|\phi \|_{\L2}^\frac12 \|\nabla \phi \|_{\L2}^\frac12\\
&\leq \frac{\nu_\ast}{20} \| \uu\|_{\L2}^2+ \frac{1}{12} \| \nabla \phi\|_{\L2}^2
+C \|\phi_2 \|_{H^2(\Omega)}^2 \| \phi\|_{\L2}^2,
\end{align*}
where we have also used the inequality \eqref{normH1-2} and the conservation of mass such that $\overline{\phi}=0$. Since $\nu'$ is bounded, by exploiting \eqref{LADY} we deduce that
\begin{align*}
 (\uu, \nu'(\phi_1) D \A^{-1}\uu \nabla \phi_1)
 &\leq C \| \uu\|_{\L2} \| D \A^{-1}\uu\|_{L^4(\Omega)} \| \nabla \phi_1\|_{L^4(\Omega)}\\
 &\leq C \| \uu\|_{\L2}^\frac32 \| \uu\|_{\ast}^\frac12 \| \nabla \phi_1\|_{H^1(\Omega)}^\frac12\\
 &\leq \frac{\nu_\ast}{20} \| \uu\|_{\L2}^2+ C \| \phi_1\|_{H^2(\Omega)}^2\| \uu\|_{\ast}^2.
\end{align*}
Next, by using Lemma \ref{result1} together with \eqref{normH1-2}, we infer that
\begin{align*}
-((\nu(\phi_1)-\nu(\phi_2))D\uu_2,\nabla \A^{-1}\uu)
&  =-\int_{\Omega} \left(\int_0^1 \nu'(\tau \phi_1+ (1-\tau)\phi_2) \, \d \tau\right) \phi D \uu_2 : \nabla \A^{-1}\uu \, \d x\\
&  \leq C \| D\uu_2\|_{\L2} \| \phi \nabla \A^{-1}\uu\|_{\L2}\\
&  \leq C \| \uu_2 \|_{H^1(\Omega)} \| \nabla \phi\|_{\L2} \|\A^{-1}\uu\|_{\V_\sigma}
\ln^\frac12 \left(C\frac{ \| \A^{-1}\uu\|_{W^{1,p}(\Omega)}}{\|\A^{-1}\uu\|_{\V_\sigma}}\right)\\
&  \leq C \| \uu_2 \|_{H^1(\Omega)} \| \nabla \phi\|_{\L2} \| \uu\|_{\ast}
\ln^\frac12 \left(C\frac{ \| \uu\|_{\L2}}{\| \uu\|_{\ast}} \right)\\
&  \leq \frac{1}{12} \|\nabla \phi \|_{\L2}^2
 +C \| \uu_2 \|_{H^1(\Omega)}^2 \| \uu\|_{\ast}^2
 \ln \left(\frac{ \widetilde{C}}{\| \uu\|_{\ast}} \right),
\end{align*}
where $\widetilde{C}>0$ is chosen sufficiently large such that
$\ln \Big( \frac{\widetilde{C} }{\|  \uu\|_{\ast}}\Big)>1$. \medskip

To finish the proof of Theorem \ref{uni2d}, we distinguish two cases.

\textbf{Case 1.} By using the Stokes operator (i.e., $\A=\mathbb{P}(-\Delta)$) and the integration by parts, we infer that
\begin{align*}
-\frac12 (\nu'(\phi_1) \nabla \phi_1 \cdot \uu, p)
&= \frac12 \big( \Delta \A^{-1} \uu , \mathbb{P} (  \nu'(\phi_1)\nabla \phi_1 p ) \big)\\
&=-\frac12 \int_{\Omega} (\nabla \A^{-1} \uu)^t : \nabla \mathbb{P} (  \nu'(\phi_1)\nabla \phi_1 p ) \, \d x \\
&\quad  +\frac12  \int_{\partial \Omega} \big( (\nabla \A^{-1} \uu)^t \mathbb{P}(  \nu'(\phi_1)\nabla \phi_1 p ) \big) \cdot \n \, \d S.
\end{align*}
Thanks to \eqref{H2equiv}, \eqref{trace}, and the properties of the Leray projection, we find
 \begin{align}
-\frac12 (\nu'(\phi_1) \nabla \phi_1 \cdot \uu, p)
& \leq C \| \nabla \A^{-1}\uu\|_{\L2} \| \nabla  \mathbb{P} (  \nu'(\phi_1)\nabla \phi_1 p )  \|_{\L2} \notag\\
&\quad
+ C \| \nabla \A^{-1}\uu\|_{L^2(\partial \Omega)} \| \mathbb{P}(  \nu'(\phi_1)\nabla \phi_1 p )\|_{L^2(\partial \Omega)}   \notag \\
& \leq C  \| \uu\|_{\ast} \| \nu'(\phi_1)\nabla \phi_1 p \|_{H^1(\Omega)} \notag\\
&\quad  +C \|\uu\|_{\ast}^\frac12 \| \uu\|_{\L2}^\frac12
\| \mathbb{P}(  \nu'(\phi_1)\nabla \phi_1 p )\|_{\L2}^\frac12
\| \mathbb{P}(  \nu'(\phi_1)\nabla \phi_1 p )\|_{H^1(\Omega)}^\frac12  \notag \\
& \leq C  \| \uu\|_{\ast} \| \nu'(\phi_1)\nabla \phi_1 p \|_{H^1(\Omega)} \notag\\
&\quad  +C \| \uu\|_{\ast}^\frac12 \| \uu\|_{\L2}^\frac12
\|  \nu'(\phi_1)\nabla \phi_1 p \|_{\L2}^\frac12
\|  \nu'(\phi_1)\nabla \phi_1 p \|_{H^1(\Omega)}^\frac12.
\label{pterm}
\end{align}
Owing to \eqref{LADY}, \eqref{BGI}, Lemma \ref{result1} and \eqref{p}, we observe that
\begin{align*}
\|  \nu'(\phi_1)\nabla \phi_1 p \|_{\L2}
&\leq C \|\nabla \phi_1 \|_{L^4(\Omega)} \| p\|_{L^4(\Omega)}\\
&\leq C \| \phi_1\|_{H^2(\Omega)}^\frac12 \| p\|_{\L2}^\frac12 \| p\|_{H^1(\Omega)}^\frac12 \\
&\leq C \| \phi_1\|_{H^2(\Omega)}^\frac12 \| \nabla \A^{-1} \uu\|_{\L2}^\frac14 \| \uu\|_{\L2}^\frac34,
\end{align*}
and
\begin{align*}
\|  \nu'(\phi_1)\nabla \phi_1 p \|_{H^1(\Omega)}
&\leq \|  \nu'(\phi_1)\nabla \phi_1 p \|_{L^2(\Omega)}
+\|  \nu''(\phi_1)\nabla \phi_1 \otimes \nabla \phi_1 p \|_{L^2(\Omega)}  \\
&\quad + \| \nu'(\phi_1) \nabla^2 \phi_1 p\|_{L^2(\Omega)}
+ \| \nu'(\phi_1) \nabla \phi_1 \otimes \nabla p\|_{L^2(\Omega)}\\
&\leq C \| \uu\|_{L^2(\Omega)} \ln^\frac12 \left( C \frac{\|\nabla  \uu\|_{L^2(\Omega)} }{\| \uu\|_{L^2(\Omega)}} \right) + C \| \nabla \phi_1\|_{L^4(\Omega)}^2 \| p\|_{L^\infty(\Omega)}\\
&\quad + C \| \phi_1\|_{H^2(\Omega)} \| p\|_{L^\infty(\Omega)} +
C \| \phi_1\|_{H^2(\Omega)} \| p\|_{H^1(\Omega)} \ln^\frac12 \left( C \frac{\| p\|_{H^2(\Omega)}}{\|p \|_{H^1(\Omega)}}\right)\\
&\leq C \big( 1+ \| \phi_1\|_{H^2(\Omega)}\big) \| \uu\|_{L^2(\Omega)} \ln^\frac12 \left( C \frac{\|\nabla  \uu\|_{L^2(\Omega)} }{\| \uu\|_{L^2(\Omega)}}\right).
\end{align*}
Combining the above estimates with \eqref{pterm}, we are led to
\begin{align*}
-\frac12 (\nu'(\phi_1) \nabla \phi_1 \cdot \uu, p)
& \leq C \big( 1+ \| \phi_1\|_{H^2(\Omega)}\big) \| \uu\|_{\ast}  \| \uu\|_{L^2(\Omega)} \ln^\frac12 \left( C \frac{\|\nabla  \uu\|_{L^2(\Omega)} }{\| \uu\|_{L^2(\Omega)}}\right)\\
&\quad + C \left( 1+ \| \phi_1\|_{H^2(\Omega)}^\frac34 \right) \| \uu\|_{\ast}^\frac58 \| \uu\|_{L^2(\Omega)}^\frac{11}{8} \ln^\frac14 \left( C \frac{\|\nabla  \uu\|_{L^2(\Omega)} }{\| \uu\|_{L^2(\Omega)}}\right)\\
&\leq \frac{\nu_\ast}{20}  \| \uu\|_{\L2}^2+
C \big( 1+ \| \phi_1\|_{H^2(\Omega)}^2 \big) \| \uu\|_{\ast}^2  \ln \left( C \frac{\|\nabla  \uu\|_{L^2(\Omega)} }{\| \uu\|_{L^2(\Omega)}}\right)\\
&\quad +C \left( 1+ \| \phi_1\|_{H^2(\Omega)}^\frac{12}{5} \right) \|  \uu\|_{\ast}^2 \ln^\frac45 \left( C \frac{\|\nabla  \uu\|_{L^2(\Omega)} }{\| \uu\|_{L^2(\Omega)}}\right).
\end{align*}
In order to handle the logarithmic terms, we recall that $\frac{ C \|\nabla  \uu\|_{L^2(\Omega)} }{\| \uu\|_{L^2(\Omega)}}>1$. Since $\frac{C' \| \uu\|_{\L2}}{\|\uu\|_{\ast}}>1$, for some $C'>0$ depending on $\Omega$, we have
\begin{align*}
 \ln^\frac45 \left( C \frac{\|\nabla  \uu\|_{L^2(\Omega)} }{\| \uu\|_{L^2(\Omega)}}\right)
& \leq 1+  \ln  \left( C \frac{\|\nabla  \uu\|_{L^2(\Omega)} }{\| \uu\|_{L^2(\Omega)}}\right)\\
& \leq 1+  \ln  \left( C \frac{ C' \|\nabla  \uu\|_{L^2(\Omega)} }{\|  \uu\|_{\ast}}\right)\\
 &\leq C+ \ln \left(1+\| \nabla \uu\|_{\L2} \right) + \ln \left( \frac{\widetilde{C} }{\|  \uu\|_{\ast}}\right),
\end{align*}
where $\widetilde{C}>0$ is a sufficiently large constant such that
$\ln \Big( \frac{\widetilde{C} }{\|  \uu\|_{\ast}}\Big)>1$, which holds true in light of \eqref{B1-D}. Thus, we obtain that
\begin{align*}
-\frac12 (\nu'(\phi_1) \nabla \phi_1 \cdot \uu, p)
& \leq \frac{\nu_\ast}{20}  \| \uu\|_{\L2}^2+
C \left( 1+ \| \phi_1\|_{H^2(\Omega)}^\frac{12}{5} \right)
 \ln \left(1+\| \nabla \uu\|_{\L2} \right)
\| \uu\|_{\ast}^2\\
&\quad +
C \left( 1+ \| \phi_1\|_{H^2(\Omega)}^\frac{12}{5} \right) \|\uu\|_{\ast}^2 \ln \left(  \frac{\widetilde{C}}{\| \uu\|_{\ast}} \right).
\end{align*}
%
Summing up, we arrive at the differential inequality
\begin{equation}
\label{u-est3}
\ddt G(t) + \frac{\nu_\ast}{4} \| \uu\|_{\L2}^2 +  \frac12 \| \nabla \phi\|_{\L2}^2
\leq C S(t)  G(t) \ln \left( \frac{\widetilde{C}}{G(t)} \right),
\end{equation}
where
\begin{align*}
S(t) &= 1+ \| \uu_1(t)\|_{H^1(\Omega)}^2+ \| \uu_2(t)\|_{H^1(\Omega)}^2
+ \|\phi_1 (t)\|_{H^2(\Omega)}^2 +\| \phi_2(t)\|_{H^2(\Omega)}^2 \\
&\quad +  \|\phi_1(t) \|_{H^2(\Omega)}^\frac{12}{5} \left( 1+ \ln \left(1+\| \nabla \uu\|_{\L2} \right) \right).
\end{align*}
In the derivation of \eqref{u-est3} we have used that the function $s \ln (\widetilde{C}/s )$ is increasing on $(0, \widetilde{C}/e)$.

We observe that $S\in L^1(0,T)$ provided that $\phi_1 \in L^{\gamma}(0,T;H^2(\Omega))$ with $\gamma>\frac{12}{5}$. Indeed, we recall the fact that
$\ln(1+s)\leq C(\kappa) (1+s)^\kappa$, for any $\kappa>0$ and $s>0$. Taking
$$\kappa= \frac{2(5\gamma -12)}{5\gamma},$$
we have
\begin{align*}
& \int_0^T \|\phi_1(\tau) \|_{H^2(\Omega)}^\frac{12}{5} \ln \left(1+\| \nabla \uu (\tau)\|_{\L2} \right) \, \d \tau\\
&\quad \leq C \int_0^T \| \phi_1 (\tau)\|_{H^2(\Omega)}^\frac{12}{5}
\left(1+\| \nabla \uu(\tau)\|_{\L2} \right)^\frac{2(5\gamma -12)}{5\gamma} \, \d \tau \\
&\quad \leq C \int_0^T \| \phi_1(\tau)\|_{H^2(\Omega)}^\gamma + \| \nabla \uu_1(\tau)\|_{\L2}^2+ \| \nabla \uu_2 (\tau)\|_{\L2}^2 \, \d \tau.
\end{align*}
Therefore, it holds $S\in L^1(0,T)$.

Integrating \eqref{u-est3} on the time interval $[0,t]$,  we find
$$
G(t) \leq G(0)+C \int_0^t S(s) G(s) \ln \left( \frac{\widetilde{C}}{G(s)} \right) \, \d s,
$$
for almost every $t \in [0,T]$. Observe that
$$\int_0^1 \frac{1}{s\ln(\frac{\widetilde{C}}{s})} \, \d s= +\infty.$$
Thus, if $G(0)=0$, applying the Osgood lemma \ref{Osgood}, we can deduce that $G(t)=0$ for all $t\in [0,T]$, namely
$$\uu_1(t)=\uu_2(t),\quad \phi_1(t)=\phi_2(t), \quad \forall\, t\in [0,T].$$
This demonstrates the uniqueness of solutions in the class of weak solutions satisfying the additional regularity $\phi_1 \in L^\gamma(0,T;H^2(\Omega))$ with $\gamma>\frac{12}{5}$.

In addition, we are able to deduce a continuous dependence estimate with respect to the initial datum. To this end, we define
$$\mathcal{M}(s)=\ln \left( \ln\left( \frac{\widetilde{C}}{s} \right) \right).$$
By the Osgood lemma, for $G(0)>0$, we are led to the inequality
\begin{equation}
\label{u-est4}
-\ln \left(\ln \left(\frac{\widetilde{C}}{G(t)}\right) \right)+\ln \left(\ln \left(\frac{\widetilde{C}}{G(0)}\right) \right)\leq C\int_0^t S(s)\, \d s
\end{equation}
for almost every $t \in [0,T]$.
Taking the double exponential of \eqref{u-est4}, we eventually infer the control
\begin{equation}
\label{CD}
G(t)\leq \widetilde{C} \left(\frac{G(0)}{\widetilde{C}}\right)^{  \exp(-C\int_{0}^t S(s)\, \mathrm{d}s)}, \qquad \forall \, t  \in [0,T_0],
\end{equation}
where $T_0>0$ is defined by
$$
\ln \left(\ln \left(\frac{\widetilde{C}}{G(0)}\right) \right) \geq C \int_0^{T_0} S(s)\, \d s.
$$

\textbf{Case 2.} Observing that $(\uu, \nabla p)=0$ and recalling \eqref{p}, we have the following alternative estimate:
\begin{equation*}
\begin{split}
\left| -\frac12 (\nu'(\phi_1) \nabla \phi_1 \cdot \uu, p) \right|
&= \left| \frac12 ( \nu(\phi_1) \uu, \nabla p ) \right|\\
&=  \frac12 \inf_{c\in [\nu_\ast,\nu^\ast]}\left|  \left(  \left(\nu(\phi_1)- c \right) \uu, \nabla p \right) \right|\\
&\leq \frac12 \inf_{c\in [\nu_\ast,\nu^\ast]}\left\| \nu(\phi_1)- c \right\|_{L^\infty(\Omega)} \| \uu\|_{L^2(\Omega)} \| \nabla p\|_{L^2(\Omega)}\\
&\leq \frac{C_\Omega^{(2)}}{2} \inf_{c\in [\nu_\ast,\nu^\ast]} \max_{s \in [-1,1]} \left| \nu(s)- c \right| \| \uu\|_{L^2(\Omega)}^2,
\end{split}
\end{equation*}
Exploiting the additional assumption \eqref{vis-add} on $\nu$, we obtain the differential inequality
\begin{equation}
\label{u-est5}
\ddt G(t) + \frac{\nu_\ast}{4} \| \uu\|_{\L2}^2 +  \frac12 \| \nabla \phi\|_{\L2}^2
\leq C \widetilde{S}(t)  G(t) \ln \Big( \frac{\widetilde{C}}{G(t)}\Big),
\end{equation}
where
$$
\widetilde{S}(t)= \Big(1+ \| \uu_1(t)\|_{H^1(\Omega)}^2+ \| \uu_2(t)\|_{H^1(\Omega)}^2
+ \|\phi_1(t) \|_{H^2(\Omega)}^2 +\| \phi_2(t)\|_{H^2(\Omega)}^2 \Big).
$$
Then the conclusion follows by arguing as in the first case.

The proof of Theorem \ref{uni2d} is complete.
\hfill $\square$
\medskip

\begin{remark}
\label{uniper}
We note that the same existence result as in Theorem \ref{W-S} holds in the periodic setting, i.e., $\Omega=\mathbb{T}^d$, $d=2,3$.
In the particular case $\Omega=\mathbb{T}^2$, uniqueness of global weak solutions can be achieved, without the conditions required in Theorem \ref{uni2d}.
Indeed, in this case the solutions of the Stokes operator $\A^{-1}\uu$ and $p$ are given by (see \cite[Chapter 2.2]{Temam})
$$
\A^{-1}\uu= \sum_{k\in \mathbb{Z}^2} g_k {e}^{\frac{2i\pi k \cdot x}{L}},
\quad p= \sum_{k\in \mathbb{Z}^2} p_k {e}^{\frac{2i\pi k \cdot x}{L}},
$$
where
$$
g_k=-\frac{L^2}{4\pi^2|k|^2} \Big( \uu_k-\frac{(k\cdot \uu_k)k}{|k|^2}\Big), \quad
p_k= \frac{L k \cdot \uu_k}{2i\pi |k|^2}, \quad
 k \in \mathbb{Z}^2, \ k\neq 0,
$$
with $L>0$ being the cell size. Here $\uu_k$ is the $k$-mode of $\uu$. We observe that we only need to consider the case $k \neq 0$ since $\overline{\uu}$ is conserved for \eqref{NSAC}$_1$ on $\mathbb{T}^2$, and thus we can choose $\overline{\uu}=0$. Moreover, since $\uu\in \H_\sigma$, we have $k \cdot \uu_k=0$ for any $k \in \mathbb{Z}^2$, which implies that $p_k=0$ for any $k \in \mathbb{Z}^2$. As a consequence, following the above arguments, we are led to the differential inequality \eqref{u-est2} without the last term on the right-hand side, i.e.,  $-\frac12 (\nu'(\phi_1)\nabla \phi_1\cdot \uu,p)$.
Hence, we eventually end up with
$$
\ddt G(t) + \frac{\nu_\ast}{4} \| \uu\|_{\L2}^2 +  \frac12 \| \nabla \phi\|_{\L2}^2
\leq C \widehat{S}(t)  G(t) \ln \Big( \frac{\widetilde{C}}{G(t)}\Big),
$$
where
$$
\widehat{S}(t)= \Big(1+ \| \uu_1(t)\|_{H^1(\Omega)}^2+ \| \uu_2(t)\|_{H^1(\Omega)}^2
+ \|\phi_1 (t)\|_{H^2(\Omega)}^2 +\| \phi_2(t)\|_{H^2(\Omega)}^2 \Big).
$$
Since $\widehat{S}\in L^1(0,T)$ for any couple of weak solutions on $[0,T]$, an application of the Osgood lemma as above entails the uniqueness of global weak solutions (now without additional regularity on $\phi$) and a continuous dependence estimate with respect to the initial data, i.e., \eqref{CD}.
\end{remark}

\section{Mass-conserving NSAC System: Strong Solutions}
\setcounter{equation}{0}
\label{S-STRONG}

This section is devoted to the analysis of global strong solutions to the Navier-Stokes-Allen-Cahn system \eqref{NSAC-D}-\eqref{IC-D} in two dimensions. The main results are as follows.

\begin{theorem}[Global strong solution in 2D]
\label{strong-D}
Let $\Omega$ be a bounded domain in $\mathbb{R}^2$ with smooth boundary $\partial\Omega$.
Assume that the initial data $\uu_0 \in \V_\sigma$, $\phi_0 \in H^2(\Omega)$ satisfy  $\partial_{\n} \phi_0=0$ on $\partial \Omega$, $F'(\phi_0)\in L^2(\Omega)$, $\|\phi_0 \|_{L^\infty(\Omega)}\leq 1$ and $|\overline{\phi}_0|<1$.
\smallskip

\begin{itemize}
\item[(1)]
There exists a global strong solution $(\uu,\phi)$ to problem \eqref{NSAC-D}-\eqref{IC-D} satisfying, for all $T>0$,
\begin{align*}
&\uu \in L^\infty(0,T;\V_\sigma)\cap L^2(0,T;\H^2(\Omega))\cap H^1(0,T;\H_\sigma),\\
&\phi \in L^\infty(0,T;H^2(\Omega))\cap L^2(0,T;W^{2,p}(\Omega)), \\
&\partial_t \phi \in L^\infty(0,T;\L2)\cap L^2(0,T;H^1(\Omega)),\\
&F'(\phi) \in L^\infty(0,T;\L2)\cap L^2(0,T;L^p(\Omega)),
\end{align*}
 for any $p \in (2,\infty)$. The strong solution satisfies the system \eqref{NSAC-D} almost everywhere in $\Omega \times (0,+\infty)$.
Besides, $|\phi(x,t)|<1$ holds for almost every $(x,t)\in \Omega\times(0,+\infty)$, the boundary condition  $\partial_{\n} \phi=0$ is satisfied almost everywhere on $\partial \Omega\times(0,+\infty)$ and the initial conditions are attained.
\smallskip

\item[(2)] There exists $\eta_1>0$ depending only on the norms of the initial data and on the parameters of the system:
\begin{align}
\eta_1=\eta_1(E(\uu_0,\phi_0), \| \uu_0\|_{\V_\sigma}, \| \phi_0\|_{H^2(\Omega)},\|F'(\phi_0)\|_{\L2},\theta,\theta_0).
\label{eeta1}
\end{align}
If, in addition,
$$\|\rho'\|_{L^\infty(-1,1)}\leq \eta_1\quad \text{and}\quad F''(\phi_0)\in L^1(\Omega),$$ then for any $T>0$, we have
\begin{align}
F''(\phi)\in L^\infty(0,T;L^1(\Omega)),\quad F''(\phi) \in L^q(0,T;L^p(\Omega)),
\end{align}
for any $p,q \in (1,\infty)$ satisfying  $\frac{1}{p}+\frac{1}{q}=1$, and
\begin{align}
\label{F''log}
(F''(\phi))^2 \ln (1+ F''(\phi)) \in L^1(\Omega\times(0,T)).
\end{align}
In particular, the strong solution satisfying \eqref{F''log} is unique.
\end{itemize}
\end{theorem}
\smallskip

\begin{theorem}[Propagation of regularity for strong solutions in 2D]
\label{Proreg-D}
Let the assumptions in Theorem \ref{strong-D}-(1) be satisfied.
Assume in addition that $\| \rho'\|_{L^\infty(-1,1)}\leq \eta_1$ (cf. \eqref{eeta1}). Given a strong  solution from Theorem \ref{strong-D}-(1), for any $\zeta>0$, there holds
$$
(F''(\phi))^2 \ln (1+ F''(\phi)) \in L^1(\Omega\times(\zeta,T)),
$$
and
$$
\partial_t\uu\in L^\infty(\zeta, T; \H_\sigma)\cap L^2(\zeta, T; \V_\sigma),\quad \partial_t \phi\in L^\infty(\zeta, T; H^1(\Omega))\cap L^2(\zeta, T; H^2(\Omega)).
$$
Moreover, for any $\zeta>0$, there exists $\delta=\delta(\zeta)>0$ such that
$$
-1+\delta \leq \phi(x,t) \leq 1-\delta, \qquad \forall \, x \in \overline{\Omega}, \ t \geq \zeta.
$$
\end{theorem}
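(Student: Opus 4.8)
The plan is to bootstrap the strong solution of Theorem \ref{strong-D}-(1): we already have, for any $T>0$, the bounds $\uu\in L^\infty(0,T;\V_\sigma)\cap L^2(0,T;\H^2(\Omega))\cap H^1(0,T;\H_\sigma)$, $\phi\in L^\infty(0,T;H^2(\Omega))\cap L^2(0,T;W^{2,p}(\Omega))$, $\partial_t\phi\in L^\infty(0,T;\L2)\cap L^2(0,T;H^1(\Omega))$ and $F'(\phi)\in L^\infty(0,T;\L2)\cap L^2(0,T;L^p(\Omega))$. What is missing, compared to part (2) of Theorem \ref{strong-D}, is the hypothesis $F''(\phi_0)\in L^1(\Omega)$: the initial datum for the entropy estimate is not assumed. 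The key observation is that this extra regularity is \emph{instantaneously generated} by the parabolic (Allen--Cahn) structure. Fix $\sigma>0$. By the known regularity, $\phi\in L^2(0,T;W^{2,p}(\Omega))\hookrightarrow L^2(0,T;C(\overline\Omega))$ in $d=2$, and more relevantly $F'(\phi)\in L^2(0,T;L^p(\Omega))$ for all $p<\infty$; hence there exists $t_0\in(0,\sigma)$ (in fact a set of positive measure) such that $\phi(t_0)\in W^{2,p}(\Omega)$, $F'(\phi(t_0))\in L^p(\Omega)$, $\uu(t_0)\in\V_\sigma$. Moreover, a separation-type bound at such a time $t_0$ — obtained from the elliptic equation $-\Delta\phi(t_0)+F'(\phi(t_0))=\mu(t_0)+\theta_0\phi(t_0)$ with right-hand side in $L^p(\Omega)$ for every $p<\infty$, via the De Giorgi--Stampacchia / Moser argument used for the Cahn--Hilliard and Allen--Cahn systems with logarithmic potential — gives $\|\phi(t_0)\|_{L^\infty(\Omega)}\le 1-\kappa_0$ for some $\kappa_0>0$, and therefore $F''(\phi(t_0))\in L^\infty(\Omega)\subset L^1(\Omega)$.

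With this, restart the clock at $t=t_0<\sigma$: the solution $(\uu,\phi)$ on $[t_0,T]$ is a strong solution (in the sense of Theorem \ref{strong-D}-(1)) whose initial datum $(\uu(t_0),\phi(t_0))$ \emph{does} satisfy all the hypotheses of Theorem \ref{strong-D}-(2), in particular $F''(\phi(t_0))\in L^1(\Omega)$; the smallness condition $\|\rho'\|_{L^\infty(-1,1)}\le\eta_1$ is assumed. Here one must check that the constant $\eta_1$ produced in Theorem \ref{strong-D}-(2) depends only on quantities that are controlled uniformly along the trajectory: $E(\uu_0,\phi_0)$ is non-increasing by the energy inequality, and $\|\uu(t_0)\|_{\V_\sigma}$, $\|\phi(t_0)\|_{H^2(\Omega)}$, $\|F'(\phi(t_0))\|_{\L2}$ are bounded by the $L^\infty(0,T)$ bounds of Theorem \ref{strong-D}-(1) (up to a harmless enlargement of $\eta_1$, or by choosing $t_0$ at a ``good'' time where these norms do not exceed twice their essential suprema; this is where one should be slightly careful, since $\eta_1$ of part (2) is decreasing in those norms — but those norms are already finite and time-independent for the given solution, so one simply uses the same $\eta_1$ evaluated at the relevant data). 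Applying Theorem \ref{strong-D}-(2) on $[t_0,T]$ then yields $(F''(\phi))^2\log(1+F''(\phi))\in L^1(\Omega\times(t_0,T))$, and a fortiori on $\Omega\times(\sigma,T)$.

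Next, the entropy estimate just obtained gives in particular $F''(\phi)\in L^q(t_0,T;L^p(\Omega))$ for the relevant exponents, hence $\Psi''(\phi)=F''(\phi)-\theta_0$ enjoys the same integrability; feeding this into the $\partial_t\phi$ equation and differentiating in time (as in the second and third \textit{a priori} estimates of the proof of Theorem \ref{strong-D}, now legitimate because $\partial_t\phi(t_0)\in H^1(\Omega)$ thanks to $\phi(t_0)\in W^{2,p}$, $F'(\phi(t_0))\in L^p$, $\uu(t_0)\in\V_\sigma$) produces $\partial_t\phi\in L^\infty(\sigma,T;H^1(\Omega))\cap L^2(\sigma,T;H^2(\Omega))$. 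In parallel, testing the Navier--Stokes equation \eqref{NSAC-D}$_1$ with $\partial_t\uu$ and using the Stokes estimate of Theorem \ref{Stokes-e} together with Lemma \ref{result1} to control the Korteweg term $\div(\nabla\phi\otimes\nabla\phi)$ and the viscosity term $\div(\nu'(\phi)\partial_t\phi\, D\uu)$ — exactly the scheme used for strong solutions, but now starting from $\uu(t_0)\in\V_\sigma$ with $\partial_t\uu(t_0)\in\H_\sigma$ (which holds for a.e.\ $t_0$ by $\uu\in H^1(0,T;\H_\sigma)$) — gives $\partial_t\uu\in L^\infty(\sigma,T;\H_\sigma)\cap L^2(\sigma,T;\V_\sigma)$. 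Finally, $\partial_t\phi\in L^\infty(\sigma,T;H^1(\Omega))$ forces $\mu=\partial_t\phi+\uu\cdot\nabla\phi+\rho'(\phi)|\uu|^2/2-\xi\in L^\infty(\sigma,T;H^1(\Omega))$ (using $\uu\in L^\infty(\sigma,T;\V_\sigma\cap\H^2)$ and $\phi\in L^\infty(\sigma,T;H^2)$ in 2D); then the elliptic equation $-\Delta\phi+F'(\phi)=\mu+\theta_0\phi$ has right-hand side in $L^\infty(\sigma,T;H^1(\Omega))\hookrightarrow L^\infty(\sigma,T;L^p(\Omega))$ for every $p<\infty$, and the uniform-in-$t$ De Giorgi--Stampacchia argument (as in the derivation of the initial separation at $t_0$, now with time-uniform constants) yields $\delta=\delta(\sigma)>0$ with $-1+\delta\le\phi(x,t)\le 1-\delta$ for all $x\in\overline\Omega$, $t\ge\sigma$.

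The main obstacle is the first step: producing the instantaneous regularization $F''(\phi(t_0))\in L^1(\Omega)$ and, more delicately, the quantitative separation $\|\phi(t_0)\|_{L^\infty}\le 1-\kappa_0$ at a single time $t_0$ from the sole knowledge that $\mu,F'(\phi)\in L^2(0,T;L^p)$ for all $p<\infty$. This requires a careful elliptic regularity / iteration argument for $-\Delta\phi+F'(\phi)=g$ with $g\in L^p$, $p$ large, exploiting the convexity and the superlinear growth of $F'$ near $\pm1$ — essentially the Cahn--Hilliard-type $L^\infty$-separation lemma adapted to a fixed time slice. Once that is in place, everything else is a re-run of the estimates of Theorems \ref{strong-D} and \ref{weak-D} on the interval $[t_0,T]$, bookkeeping the dependence of the constant $\eta_1$ on the (already bounded) norms of the solution.
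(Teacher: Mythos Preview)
Your overall architecture --- pick a good time $t_0\in(0,\sigma)$, restart, invoke the entropy machinery of Theorem \ref{strong-D}-(2), then bootstrap $\partial_t\uu,\partial_t\phi$, then separate --- matches the paper. But two of the mechanisms you propose do not actually work, and the paper's route differs precisely there.

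\textbf{The elliptic separation step.} You claim that from $-\Delta\phi(t_0)+F'(\phi(t_0))=g(t_0)$ with $g(t_0)\in L^p(\Omega)$ for all $p<\infty$ one obtains $\|\phi(t_0)\|_{L^\infty}\le 1-\kappa_0$ via a De Giorgi--Stampacchia/Moser argument. Testing with $|F'(\phi)|^{q-2}F'(\phi)$ gives only $\|F'(\phi)\|_{L^q}\le\|g\|_{L^q}$ for each $q$; since $\|g\|_{L^q}$ may blow up as $q\to\infty$, this does \emph{not} yield $F'(\phi)\in L^\infty$, and because $F''(s)\sim e^{(2/\theta)|F'(s)|}$ it does not even yield $F''(\phi)\in L^1$. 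The paper instead uses a Trudinger--Moser--type estimate (quoted as \cite[Lemma 7.4]{GGW2018}): from the strong-solution bounds one has $\widetilde\mu:=-\Delta\phi+F'(\phi)\in L^2(0,T;H^1(\Omega))$, and then $\|F''(\phi)\|_{L^p}\le C(1+e^{C\|\widetilde\mu\|_{H^1}^2})$ a.e.\ in $(0,T)$. Since the right-hand side is finite a.e., one picks $\sigma$ with $F''(\phi(\sigma))\in L^p(\Omega)$ --- no pointwise separation is needed, and the entropy argument of Theorem \ref{strong-D}-(2) then runs on $[\sigma,T]$. Your worry about $\eta_1$ is unfounded: the actual smallness condition \eqref{Hyp} involves $R_0=\sup_{t\ge0}\|\nabla\uu(t)\|_{\L2}$, a global quantity for the given solution that does not change upon restart.

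\textbf{The uniform separation.} The same obstruction recurs at the end: $\mu\in L^\infty(\sigma,T;H^1(\Omega))$ gives the elliptic right-hand side in $L^\infty_tL^p_x$ for all $p<\infty$, still insufficient for $L^\infty$ separation uniformly in $t$. The paper does not argue elliptically here. Instead, after obtaining $\partial_t\uu\in L^\infty(\sigma,T;\H_\sigma)$ one gets $\uu\in L^\infty(\sigma,T;\mathbf{W}^{1,p}(\Omega))$ via \eqref{est-uw1p}, hence $\uu\in L^\infty_tL^\infty_x$; combined with $\partial_t\phi\in L^2(\sigma,T;H^2)$ this yields $\widetilde\mu\in L^2(\sigma,T;L^\infty(\Omega))$, so $F'(\phi(\sigma'))\in L^\infty(\Omega)$ at some $\sigma'$. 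From there the paper runs a \emph{parabolic} Moser iteration on $\partial_t\phi+\uu\cdot\nabla\phi-\Delta\phi+F'(\phi)=U$ with $U\in L^\infty(\Omega\times(\sigma,\infty))$: testing with $|F'(\phi)|^{p-2}F'(\phi)$ and sending $p\to\infty$ gives a time-uniform bound on $\|F'(\phi(t))\|_{L^\infty}$.

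\textbf{A minor point.} Your justification ``$\partial_t\phi(t_0)\in H^1$ thanks to $\phi(t_0)\in W^{2,p}$'' is not right: that would require $\phi(t_0)\in H^3$ and $F''(\phi(t_0))\nabla\phi(t_0)\in L^2$. The correct (and simpler) reason is that $\|\partial_t\uu\|_{\L2}^2+\|\nabla\partial_t\phi\|_{\L2}^2\in L^1(0,T)$ by \eqref{NSAC4}, so these are finite at a.e.\ time; one picks the restart time $\sigma$ accordingly.
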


\begin{remark}
The smallness assumption on $\rho'$ (see \eqref{Hyp} below for the explicit form) can be reformulated in terms of the difference of the (constant) densities of the two fluid components  when $\rho$ is a linear interpolation function. In this case, we have
$$
\rho(s)= \rho_1\frac{1+s}{2}+ \rho_2 \frac{1-s}{2},  \quad \rho'(s)= \frac{\rho_1-\rho_2}{2} \quad \forall \, s \in [-1,1].
$$
Roughly speaking, the results given by Theorem \ref{strong-D} and Theorem \ref{Proreg-D} imply that uniqueness and further regularity of strong solutions to the nonhomogeneous system can be achieved provided that the two fluids have a small difference in densities (i.e., $\rho_1 \approx \rho_2$).
\end{remark}

\begin{remark}[Matched densities]
\label{strong-hom}
It is worth noticing that Theorem  \ref{strong-D} and Theorem \ref{Proreg-D} hold true in the case of constant density $\rho\equiv 1$ (i.e., $\rho_1=\rho_2=1$) without any smallness assumption.
Moreover, thanks to Theorems \ref{uni2d} and \ref{strong-D},
any global weak solution $(\uu,\phi)$ to problem \eqref{NSAC}-\eqref{bic} with  $\rho\equiv 1$ defined in Theorem \ref{W-S} is a strong solution for any $t>0$.
\end{remark}

\subsection{Proof of Theorem \ref{strong-D}}
We perform higher-order \textit{a priori} estimates that are necessary for the existence of global strong solutions.\smallskip

\textbf{Higher-order estimates.}
Multiplying \eqref{NSAC-D}$_1$ by $\partial_t \uu$, integrating over $\Omega$, and observing that
$$
\int_{\Omega} \nu(	\phi)D\uu \cdot D \partial_t \uu \, \d x= \frac12 \ddt  \int_{\Omega} \nu(\phi) |D\uu|^2 \, \d x - \frac12 \int_{\Omega} \nu'(\phi) \partial_t \phi |D \uu|^2 \, \d  x,
$$
we obtain
\begin{align}
\frac12 &\ddt \int_{\Omega}  \nu(\phi) |D\uu|^2 \, \d x
+ \int_{\Omega} \rho(\phi) |\partial_t \uu|^2 \, \d x \notag \\
&= - ( \rho(\phi) \uu \cdot \nabla \uu, \partial_t \uu) - \int_{\Omega}\Delta \phi \, \nabla \phi \cdot \partial_t \uu \, \d x+ \frac12 \int_{\Omega} \nu'(\phi) \partial_t \phi |D \uu|^2 \, \d  x.
\label{NS1-D}
\end{align}
Next, differentiating \eqref{NSAC-D}$_3$ in time, multiplying the resultant by $\partial_t \phi$ and integrating over $\Omega$, we obtain
\begin{align}
\frac12 &\ddt \| \partial_t \phi\|_{\L2}^2+ \int_{\Omega} \partial_t \uu \cdot \nabla \phi \, \partial_t \phi \, \d x + \| \nabla \partial_t \phi\|_{\L2}^2
+\int_{\Omega} F''(\phi) |\partial_t \phi|^2 \, \d x  \notag \\
&= \theta_0 \| \partial_t \phi\|_{\L2}^2
- \int_{\Omega} \rho''(\phi) |\partial_t \phi|^2 \frac{|\uu|^2}{2}\, \d x
- \int_{\Omega} \rho'(\phi) \uu \cdot \partial_t \uu \, \partial_t \phi \, \d x + \partial_t \xi \int_{\Omega} \partial_t \phi \, \d x.
\label{AC1-D}
\end{align}
Adding the equations \eqref{NS1-D} and \eqref{AC1-D}, using $\overline{\partial_t \phi} =0$, we find that
\begin{align}
&\ddt H(t) +\rho_\ast \| \partial_t \uu\|_{\L2}^2 + \| \nabla \partial_t \phi\|_{\L2}^2+
\int_{\Omega} F''(\phi)|\partial_t \phi|^2 \, \d x \notag \\
&\quad \leq - (\rho(\phi) \uu \cdot \nabla \uu, \partial_t \uu) - \int_{\Omega}\Delta \phi \, \nabla \phi \cdot \partial_t \uu \, \d x +  \frac12 \int_{\Omega} \nu'(\phi) \partial_t \phi |D \uu|^2 \, \d  x+   \theta_0 \| \partial_t \phi\|_{\L2}^2 \notag \\
&\qquad - \int_{\Omega} \partial_t \uu \cdot \nabla \phi \, \partial_t \phi \, \d x - \int_{\Omega} \rho''(\phi) |\partial_t \phi|^2 \frac{|\uu|^2}{2}\, \d x
- \int_{\Omega} \rho'(\phi) \uu \cdot \partial_t \uu \, \partial_t \phi \, \d x,
\label{NSAC1-D}
\end{align}
where
\begin{equation}
\label{H-D}
H(t)= \frac12 \int_{\Omega}  \nu(\phi(t)) |D\uu(t)|^2 \, \d x + \frac12  \| \partial_t \phi(t)\|_{\L2}^2.
\end{equation}
In \eqref{NSAC1-D}, we have used the fact that $\rho$ is strictly positive ($\rho(s)\geq \rho_\ast>0$). In addition, we infer from \eqref{NSAC-D} that
$$
\|\partial_t \phi\|_{\L2}\leq C\left( 1+ \| \uu\|_{H^1(\Omega)}\right) \| \phi\|_{H^2(\Omega)}+C \| F'(\phi)\|_{\L2}+ C \| \uu\|_{H^1(\Omega)}^2.
$$
Therefore, it follows from the assumptions on the initial data that $H(0)<+\infty$.

We proceed to estimate the right-hand side of \eqref{NSAC1-D}.
By using \eqref{KORN} and \eqref{BGW}, we have
\begin{align*}
-( \rho(\phi) \uu \cdot \nabla \uu, \partial_t \uu)
&\leq\| \rho(\phi)\|_{L^\infty(\Omega)} \| \uu\|_{L^\infty(\Omega)} \| \nabla \uu\|_{\L2} \| \partial_t \uu\|_{\L2}\\
&\leq C \| D\uu\|_{L^2(\Omega)}^2 \ln^\frac12 \left( C \frac{\| \uu\|_{W^{1,p}(\Omega)}}{\| D \uu\|_{L^2(\Omega)}} \right) \| \partial_t \uu\|_{\L2}\\
&\leq \frac{\rho_\ast}{8}  \|\partial_t \uu \|_{\L2}^2+ C \| D \uu\|_{L^2(\Omega)}^4 \ln \left( C \frac{\| \uu\|_{W^{1,p}(\Omega)}}{\| D \uu\|_{L^2(\Omega)}}\right),
\end{align*}
for some $p>2$. Moreover, it holds
\begin{align*}
- \int_{\Omega}\Delta \phi \, \nabla \phi \cdot \partial_t \uu \, \d x
&\leq \| \Delta \phi \|_{\L2} \| \nabla \phi\|_{L^\infty(\Omega)}
\| \partial_t \uu\|_{\L2}\\
&\leq C \|\Delta \phi \|_{\L2} \| \nabla \phi\|_{H^1(\Omega)} \ln^\frac12 \left( C \frac{\| \nabla \phi\|_{W^{1,p}(\Omega)}}{\| \nabla \phi\|_{H^1(\Omega)}}\right) \| \partial_t \uu\|_{\L2}\\
&\leq \frac{\rho_\ast}{8}  \|\partial_t \uu \|_{\L2}^2+ C  \|\Delta \phi \|_{\L2}^2 \| \nabla \phi\|_{H^1(\Omega)}^2\ln \left( C \frac{\| \nabla \phi\|_{W^{1,p}(\Omega)}}{\| \nabla \phi\|_{H^1(\Omega)}}\right).
\end{align*}
Next, by exploiting Lemma \ref{result1} and $\overline{\partial_t \phi}=0$, we obtain
\begin{align*}
 \frac12 \int_{\Omega} \nu'(\phi) \partial_t \phi |D \uu|^2 \, \d  x
 &\leq \| \nu'(\phi)\|_{L^\infty(\Omega)} \| \partial_t \phi |D \uu|\|_{\L2} \| D \uu\|_{\L2}\\
 & \leq C \| \nabla \partial_t \phi\|_{\L2} \| D \uu\|_{\L2}^2 \ln^\frac12 \left( C \frac{\|D  \uu\|_{L^p(\Omega)}}{\| D \uu\|_{\L2}}\right)\\
 &\leq \frac18  \| \nabla \partial_t \phi\|_{\L2}^2+ C \| D \uu\|_{\L2}^4 \ln \left( C \frac{\|D  \uu\|_{L^p(\Omega)}}{\| D \uu\|_{\L2}}\right).
\end{align*}
It remains to control the last three terms on the right-hand side of \eqref{NSAC1-D}. By using \eqref{LADY} and \eqref{B1-D}, we obtain
\begin{align*}
- \int_{\Omega} \partial_t \uu \cdot \nabla \phi \, \partial_t \phi \, \d x
&\leq   \|  \partial_t \uu\|_{\L2} \| \nabla \phi\|_{L^4(\Omega)} \| \partial_t \phi\|_{L^4(\Omega)}\\
&\leq \|  \partial_t \uu\|_{\L2} \| \nabla \phi\|_{\L2}^\frac12 \| \phi\|_{H^2(\Omega)}^\frac12 \| \partial_t \phi\|_{\L2}^\frac12 \| \nabla \partial_t \phi\|_{\L2}^\frac12 \\
&\leq \frac{\rho_\ast}{8}  \|  \partial_t \uu\|_{\L2}^2
+\frac18 \| \nabla \partial_t \phi\|_{\L2}^2+ C \| \phi\|_{H^2(\Omega)}^2
\| \partial_t \phi\|_{\L2}^2,
\end{align*}
\begin{align*}
- \int_{\Omega} \rho''(\phi) |\partial_t \phi|^2 \frac{|\uu|^2}{2}\, \d x
&\leq C \| \rho''(\phi)\|_{L^\infty(\Omega)} \| \partial_t \phi\|_{L^4(\Omega)}^2
\|\uu\|_{L^4(\Omega)}^2\\
&\leq C \|\partial_t \phi \|_{\L2} \| \nabla \partial_t  \phi\|_{\L2} \| \uu\|_{\L2}\| \nabla \uu\|_{\L2}\\
&\leq \frac18 \|  \nabla \partial_t  \phi\|_{\L2}^2
+C  \|\partial_t \phi \|_{\L2}^2\| D\uu\|_{\L2}^2,
\end{align*}
and
\begin{align*}
- \int_{\Omega} \rho'(\phi) \uu \cdot \partial_t \uu \, \partial_t \phi \, \d x
&\leq C \|  \rho'(\phi)\|_{L^\infty(\Omega)}
\| \uu\|_{L^4(\Omega)}\| \partial_t \uu\|_{\L2} \| \partial_t \phi\|_{L^4(\Omega)}\\
&\leq C \| \uu\|_{\L2}^\frac12 \|\nabla \uu\|_{\L2}^\frac12
\| \partial_t \uu\|_{\L2} \| \partial_t \phi\|_{\L2}^\frac12  \| \nabla \partial_t \phi\|_{\L2}^\frac12\\
&\leq \frac{\rho_\ast}{8} \| \partial_t \uu\|_{\L2}^2+ \frac18 \|  \nabla \partial_t  \phi\|_{\L2}^2
+C \| \partial_t \phi\|_{\L2}^2\| D\uu\|_{\L2}^2.
\end{align*}
Combining \eqref{NSAC1-D} and the above inequalities, we deduce that
\begin{align*}
& \ddt H(t) +\frac{\rho_\ast}{2}   \| \partial_t \uu\|_{\L2}^2 + \frac12 \| \nabla \partial_t \phi\|_{\L2}^2 \\
&\quad \leq C \| \partial_t \phi\|_{\L2}^2
+C  \big(\| D\uu\|_{\L2}^2+\|\phi \|_{H^2(\Omega)}^2\big)
\|\partial_t \phi \|_{\L2}^2 \\
&\qquad + C\| D \uu\|_{L^2(\Omega)}^4 \ln \left( C \frac{\| \uu\|_{W^{1,p}(\Omega)}}{\|D \uu\|_{L^2(\Omega)}}\right) + C \| \Delta \phi\|_{\L2}^2 \| \nabla \phi\|_{H^1(\Omega)}^2 \ln \left( C \frac{\| \nabla \phi\|_{W^{1,p}(\Omega)}}{\| \nabla \phi\|_{H^1(\Omega)}}\right).
\end{align*}
From the inequalities
\begin{align}
\label{ineq0}
&x^2\ln \left(\frac{C y}{x}\right)\leq x^2 \ln (Cy) +1,\quad \forall\,x,\,y>0,\\
& \frac{\nu_\ast}{2} \| D \uu\|_{\L2}^2 +\frac12 \| \partial_t \phi\|_{\L2}^2
\leq  H(t)\leq C \Big( \| D \uu\|_{\L2}^2 +\| \partial_t \phi\|_{\L2}^2\Big),
\label{Hbb}
\end{align}
and the estimate \eqref{estH2-D}, we can rewrite the above differential inequality as follows
\begin{align}
\label{NSAC2-D}
& \ddt H(t) +\frac{\rho_\ast}{2}  \| \partial_t \uu\|_{\L2}^2 + \frac12 \| \nabla \partial_t \phi\|_{\L2}^2  \notag \\
&\quad \leq C\left(1+ H(t) + H^2(t)\right)
+C H^2(t) \ln \left( C \| \uu\|_{W^{1,p}(\Omega)}\right)
 + C \left( 1+H^2(t) \right)
\ln \left( C \|  \phi\|_{W^{2,p}(\Omega)}\right).
\end{align}

Let us now estimate the argument of the logarithmic terms on the right-hand side of
\eqref{NSAC2-D}. First, we rewrite \eqref{NSAC-D}$_1$ as a Stokes problem with non-constant viscosity
$$
\begin{cases}
-\div (\nu(\phi)D \uu)+\nabla P= \f, & \text{a.e. in } \Omega\times (0,T),\\
\div \uu=0, & \text{a.e. in } \Omega\times (0,T),\\
\uu=\mathbf{0}, & \text{a.e. on } \partial \Omega\times (0,T),
\end{cases}
$$
where $$\f=  -\rho(\phi) \big( \partial_t \uu + \uu \cdot \nabla \uu \big) - \Delta \phi \nabla \phi.$$
Applying Theorem \ref{Stokes-e} with the following choice of parameters
$$
p=1+\varepsilon\ \ \text{with}\ \ \varepsilon \in (0,1),\quad\text{ and}\ \ r\in (2,\infty)\ \ \text{such that}\ \ \frac{1}{r}=\frac{1}{1+\varepsilon}-\frac12,
$$
we infer that
\begin{align*}
\| \uu\|_{W^{2,1+\varepsilon}(\Omega)}
&\leq C \left( \| \partial_t \uu\|_{L^{1+\varepsilon}(\Omega)}+ \| \uu\cdot \nabla \uu\|_{L^{1+\varepsilon}(\Omega)}
+\|\Delta \phi \nabla \phi\|_{L^{1+\varepsilon}(\Omega)} \right)\\
&\quad +
C \| D \uu\|_{\L2} \| \nabla \phi\|_{L^r(\Omega)}\\
&\leq C \left( \| \partial_t \uu\|_{\L2}+ \| \uu\|_{L^\frac{2(1+\varepsilon)}{1-\varepsilon}(\Omega)}\|\nabla \uu \|_{\L2}+
\| \nabla \phi\|_{L^\frac{2(1+\varepsilon)}{1-\varepsilon}(\Omega)} \| \Delta \phi\|_{\L2}  \right)\\
&\quad  + C \| D \uu\|_{\L2} \| \phi\|_{H^2(\Omega)}\\
&\leq   C \| \partial_t \uu\|_{\L2}+ C  \|D \uu \|_{\L2}^2 +
 \|  \phi\|_{H^2(\Omega)}^2\\
 &\leq C \| \partial_t \uu\|_{\L2}+ C(1+H(t)),
\end{align*}
where the constant $C$ depends on $\varepsilon$.
We recall the Sobolev embedding $W^{2,1+\varepsilon}(\Omega)\hookrightarrow W^{1,p}(\Omega)$ where $\frac{1}{p}=\frac{1}{1+\varepsilon}- \frac12$. Then for any $p\in (2,\infty)$, there exists a constant $C>0$ depending on $p$ such that
\begin{align}
\|\uu\|_{W^{1,p}(\Omega)}\leq C \| \partial_t \uu\|_{\L2} + C\left( 1+H(t) \right).
\label{est-uw1p}
\end{align}

Next, we reformulate the equation \eqref{NSAC-D}$_4$ as the elliptic problem
\begin{equation}
\begin{cases}
-\Delta \phi+ F'(\phi)=\mu+\theta_0\phi,  &\quad \text{a.e. in}\ \Omega\times(0,T),\\
\partial_\n \phi=0, &\quad \text{a.e. on}\ \partial \Omega\times (0,T).
\end{cases}
\end{equation}
From the elliptic regularity theory (see, e.g., \cite[Lemma 2]{A2009} and \cite{GMT2019}) we find that
\begin{equation}
\label{pw2p}
\begin{split}
\| \phi\|_{W^{2,p}(\Omega)} +\|F'(\phi)\|_{L^p(\Omega)}
&\leq C (1+\|\phi\|_{L^2(\Omega)}+\|\mu+\theta_0\phi\|_{L^p(\Omega)})\\
&\leq C (1+\|\phi\|_{L^p(\Omega)}+\|\mu\|_{L^p(\Omega)}),
\end{split}
\end{equation}
for any $p\in (2,\infty)$. On the other hand, from the equation \eqref{NSAC-D}$_3$, we have
$$
\mu= -\partial_t \phi-\uu\cdot \nabla \phi- \rho'(\phi) \frac{|\uu|^2}{2}+\displaystyle{\overline{\mu+ \rho'(\phi)\frac{|\uu|^2}{2}}}.
$$
Observe that
$$
\left\| -\rho'(\phi) \frac{|\uu|^2}{2} \right\|_{L^p(\Omega)}\leq C \| \uu\|_{L^{2p}(\Omega)}^2
\leq C \|\nabla \uu \|_{L^2(\Omega)}^2.
$$
Then, owing to the Sobolev embedding theorem and \eqref{normH1-2}, we have
\begin{align*}
 \|\mu-\overline{\mu} \|_{L^p(\Omega)}
 &\leq \| \partial_t \phi\|_{L^p(\Omega)} + \| \uu \cdot \nabla \phi\|_{L^p(\Omega)}+\left\| \rho'(\phi) \frac{|\uu|^2}{2} -
 \displaystyle{\overline{\rho'(\phi)\frac{|\uu|^2}{2}}}\right\|_{L^p(\Omega)}\\
 &\leq C \| \nabla \partial_t \phi\|_{\L2}
 +C  \| \uu\|_{H^1(\Omega)} \| \phi\|_{H^2(\Omega)}+C \|\nabla \uu \|_{L^2(\Omega)}^2.
\end{align*}
In light of \eqref{mu-L2-2} and \eqref{mubar}, the above inequality yields
\begin{align}
\| \mu\|_{L^p(\Omega)}
&\leq C \|\mu-\overline{\mu} \|_{L^p(\Omega)} +C |\overline{\mu}| \notag \\
&\leq C \|\mu-\overline{\mu} \|_{L^p(\Omega)}+
C \left( 1+\| \mu-\overline{\mu} \|_{\L2} \right) \notag \\
 &\leq C \left(1+\| \nabla \partial_t \phi\|_{\L2}
+H(t) \right).
\label{mu-Lp}
\end{align}
Thus, for any $p>2$, we deduce from the above estimate and \eqref{pw2p} that
\begin{equation}
\label{estW2p-D}
\| \phi\|_{W^{2,p}(\Omega)} \leq C \left( 1+\| \nabla \partial_t \phi\|_{\L2} +H(t) \right),
\end{equation}
for some positive constant $C$ depending on $p$.

Recalling the generalized Young's inequality
\begin{align}
xy \leq \Phi(x)+ \Upsilon(y), \quad \forall \, x,\,y >0,
\label{Young0}
\end{align}
where
$$
\Phi(s)= s \ln s -s+1, \quad \Upsilon(s)= {e}^{s}-1,
$$
we have
\begin{align*}
H(t) \ln \left(1+\| \partial_t \uu\|_{\L2} \right)
&\leq H(t)\ln H(t)+ 1 + \| \partial_t \uu\|_{\L2}.
\end{align*}
Then using the above estimate and the elementary inequality
$$ \ln(x+y)<\ln(1+x)+\ln(1+y),\quad \forall\, x,y>0,$$
we can estimate the second term on the right-hand side of \eqref{NSAC2-D} as follows
\begin{align}
&CH^2(t )\ln \left( C\|\uu\|_{W^{1,p}(\Omega)}\right) \notag \\
&\quad \leq CH^2(t)\ln \left( C \| \partial_t \uu\|_{\L2} + C(1+H(t)) \right) \notag \\
&\quad \leq CH^2(t) \left(1+\ln(1+\| \partial_t \uu\|_{\L2})+\ln(1+H(t)) \right) \notag \\
&\quad \leq CH^2(t)+ CH(t)\left( H(t)\ln H(t)+ 1 \right) \notag \\
&\qquad + CH(t)\| \partial_t \uu\|_{\L2}+ CH^2(t)\ln(1+H(t)) \notag \\
&\quad \leq \frac{\rho_\ast}{4}  \| \partial_t \uu\|_{\L2}^2+ C\left(1+H^2(t)\right)+ CH^2(t)\ln(e+H(t)).
\label{RHD3-D}
\end{align}
In a similar manner, we have
\begin{align*}
H(t)\ln(1+\| \nabla \partial_t \phi\|_{\L2})
&\leq H(t)\ln H(t)+ 1 + \| \nabla \partial_t \phi\|_{\L2}.
\end{align*}
Using \eqref{estW2p-D}, the third term on the right-hand side of \eqref{NSAC2-D} can be estimated as follows
\begin{align}
&C \left( 1+H^2(t) \right)
\ln \left( C \|  \phi\|_{W^{2,p}(\Omega)}\right) \notag  \\
&\quad \leq C \left( 1+H^2(t) \right)  \ln \left(C(1+\| \nabla \partial_t \phi\|_{\L2} +H(t)) \right) \notag \\
&\quad \leq C\left(1+H^2(t) \right) +C\ln \left(1+\|\nabla \partial_t \phi\|_{L^2(\Omega)}+H(t)\right) \notag \\
&\qquad +H^2(t)\ln \left(1+\| \nabla \partial_t \phi\|_{\L2}+ H(t) \right) \notag \\
&\quad \leq C \left( 1+H^2(t) \right) +C \left( 1+\| \nabla \partial_t \phi\|_{\L2}+H(t) \right) + C\| \nabla \partial_t \phi\|_{\L2} H(t) \notag \\
&\qquad
+ H^2(t) \ln (1+H(t)) \notag \\
&\quad \leq \frac{1}{8}\| \nabla \partial_t \phi\|_{\L2}^2+ C\left(1+H^2(t)\right)+ C H(t)\left( e+H(t) \right) \ln (e+H(t)).
\label{RHD4-D}
\end{align}

Hence, by \eqref{RHD3-D} and \eqref{RHD4-D}, we easily deduce from \eqref{NSAC2-D} that
\begin{align}
\ddt (e+H(t)) &+\frac{\rho_\ast}{4}  \| \partial_t \uu\|_{\L2}^2 + \frac14 \| \nabla \partial_t \phi\|_{\L2}^2\leq C+CH(t) (e+H(t)) \ln (e+H(t)).
\label{NSAC3-D}
\end{align}
On the other hand, thanks to \eqref{E-bound}, \eqref{estH2-D} and \eqref{Hbb}, we infer that
\begin{equation}
\label{intH}
\int_t^{t+1} H(\tau) \, \d \tau \leq Q(E_0), \quad \forall \, t \geq 0,
\end{equation}
where $Q$ is independent of $t$, and $E_0=E(\uu_0,\phi_0)$.
Applying the generalized Gronwall lemma \ref{GL2} to \eqref{NSAC3-D}, we find the estimate
$$
\sup_{t \in [0,1]} H(t)\leq C \big(e+H(0)\big)^{{e}^{Q(E_0)}}.
$$
Besides, by using the generalized uniform Gronwall lemma \ref{UGL2} together with \eqref{intH}, we infer that
$$
\sup_{t\geq 1} H(t)\leq C e^{(e+Q(E_0) ) e^{(1+Q(E_0))}}.
$$
Combining the above inequalities, we can obtain
\begin{align}
\sup_{t \geq 0} H(t)\leq Q \left(E_0, \| \uu_0\|_{\V_\sigma}, \| \phi_0\|_{H^2(\Omega)}, \| F'(\phi_0)\|_{L^2(\Omega)} \right).
\label{NSAC3h-D}
\end{align}
In addition, integrating \eqref{NSAC3-D} on the time interval $[t,t+1]$, we have, for all $t\geq 0$,
\begin{align}
 \int_t^{t+1} \| \partial_t \uu(\tau)\|_{\L2}^2 +  \| \nabla \partial_t \phi(\tau)\|_{\L2}^2\, \d \tau \leq Q \left( E_0, \| \uu_0\|_{\V_\sigma}, \| \phi_0\|_{H^2(\Omega)}, \| F'(\phi_0)\|_{L^2(\Omega)} \right).
\label{NSAC4}
\end{align}

From the above estimates, we can deduce that
\begin{equation}
\label{str-1}
\uu \in L^\infty(0,T; \V_\sigma)\cap H^1(0,T; \H_\sigma), \quad
\partial_t \phi \in L^\infty(0,T; L^2(\Omega))\cap L^2(0,T;H^1(\Omega)), \quad \forall \, T >0.
\end{equation}
Thanks to \eqref{estH2-D} and \eqref{estW2p-D}, we also get
\begin{equation}
\label{str-2}
\sup_{t\geq 0}  \| \phi(t)\|_{H^2(\Omega))}
 \leq  Q \left( E_0, \| \uu_0\|_{\V_\sigma}, \| \phi_0\|_{H^2(\Omega)}, \| F'(\phi_0)\|_{L^2(\Omega)} \right),
\end{equation}
and, for all $t\geq 0$,
\begin{equation}
\label{str-2'}
\int_t^{t+1} \| \phi(\tau)\|_{W^{2,p}(\Omega))}^2 \, \d \tau  \leq  Q \left( E_0, \| \uu_0\|_{\V_\sigma}, \| \phi_0\|_{H^2(\Omega)}, \| F'(\phi_0)\|_{L^2(\Omega)} \right),
\end{equation}
for any $p \in (2,\infty)$.
This entails that
$$\phi \in L^\infty(0,T;H^2(\Omega))\cap L^2(0,T;W^{2,p}(\Omega)),\quad   \forall\,T>0.$$
According to \eqref{mu-L2-2}, \eqref{mubar} and \eqref{mu-Lp}, it further follows that
$$
\mu\in  L^\infty(0,T;L^2(\Omega))\cap L^2(0,T;L^{p}(\Omega)),\quad   \forall\, T>0.
$$
As a consequence, it holds that
$$
F'(\phi) \in L^\infty(0,T;\L2)\cap L^2(0,T;L^p(\Omega)), \quad \forall \, T>0.
$$
Finally, by exploiting Theorem \ref{Stokes-e} with $p=2$ and $r=\infty$, together with the regularity of $\phi$ obtained above, we have, for all $t\geq 0$,
\begin{equation}
\label{str-3}
\int_t^{t+1} \|\uu(\tau)\|_{H^2(\Omega)}^2 \, \d \tau \leq
Q \left( E_0, \| \uu_0\|_{\V_\sigma}, \| \phi_0\|_{H^2(\Omega)}, \| F'(\phi_0)\|_{L^2(\Omega)} \right),
\end{equation}
which yields that
$$\uu \in L^2(0,T;\mathbf{H}^2(\Omega)),  \quad \forall\, T>0.$$

\textbf{Existence of global strong solutions.} With the above \textit{a priori} estimates, we are able to prove the existence of global strong solutions using a standard approximation procedure and pass to the limit (cf. the proof of Theorem \ref{weak-D}). The details are omitted here.
\medskip

\textbf{Entropy bound in $L^\infty(0,T;L^1(\Omega))$.}
First of all, we observe that, for all $s\in (-1,1)$,
\begin{equation}
\label{Fder1}
F'(s)= \frac{\theta}{2} \ln \left( \frac{1+s}{1-s} \right), \quad F''(s)= \frac{\theta}{1-s^2}, \quad F'''(s)= \frac{2\theta s}{(1-s)^2(1+s)^2},
\end{equation}
and
\begin{equation}
\label{Fder2}
F^{(4)}(s)=  \frac{2 \theta(1+3s^2)}{(1-s)^3(1+s)^3}>0.
\end{equation}
Next, we compute
\begin{align*}
\ddt \int_{\Omega} F''(\phi) \, \d x&= \int_{\Omega} F'''(\phi) \partial_t \phi \, \d x\\
&=\int_{\Omega} F'''(\phi) \left( \Delta \phi - \uu \cdot \nabla \phi -F'(\phi)+ \theta_0 \phi - \rho'(\phi) \frac{|\uu|^2}{2}+ \xi \right) \, \d x.
\end{align*}
Since
$$
\int_{\Omega} F'''(\phi) \uu \cdot \nabla \phi \, \d x=
 \int_{\Omega} \uu \cdot \nabla ( F''(\phi)) \, \d x=0,
$$
and exploiting the integration by parts, we rewrite the above equality as follows
\begin{align}
&\ddt \int_{\Omega} F''(\phi) \, \d x + \int_{\Omega} F^{(4)}(\phi) |\nabla \phi|^2 \, \d x + \int_{\Omega} F'''(\phi) F'(\phi) \, \d x \notag \\
&\quad  =  \int_{\Omega} F'''(\phi) \left( \theta_0 \phi - \rho'(\phi) \frac{|\uu|^2}{2}+\xi \right) \, \d x.
\label{EntE-}
\end{align}
In particular, from \eqref{Fder2}, we obtain
\begin{equation}
\label{EntE}
\ddt \int_{\Omega} F''(\phi) \, \d x + \int_{\Omega} F'''(\phi) F'(\phi) \, \d x
\leq  \int_{\Omega} F'''(\phi) \left( \theta_0 \phi - \rho'(\phi) \frac{|\uu|^2}{2} + \xi \right) \, \d x.
\end{equation}
It follows from \eqref{Young0} that
\begin{equation}
\label{Young}
xy \leq \varepsilon x \ln x + {e}^{\frac{y}{\varepsilon}},\quad \forall \, x>0,\ y>0,\   \varepsilon \in (0,1),
\end{equation}
which implies
\begin{equation}
\label{EE1}
\begin{split}
\int_{\Omega} -F'''(\phi) \rho'(\phi) \frac{|\uu|^2}{2} \, \d x&\leq
\int_{\Omega} |F'''(\phi)| |\rho'(\phi)| \frac{|\uu|^2}{2} \, \d x  \\
& \leq \varepsilon \int_{\Omega} |F'''(\phi)| \ln ( |F'''(\phi)| ) \, \d x+
\int_{\Omega} {e}^{\frac{|\rho'(\phi)|}{\varepsilon} \frac{|\uu|^2}{2}}\, \d x.
\end{split}
\end{equation}
We observe that, for all $s\in [0,1)$, it holds
\begin{align*}
\ln \left( |F'''(s)| \right) &= \ln \left( F'''(s) \right)= \ln \left( \frac{2\theta s}{(1-s)^2(1+s)^2}\right)\notag \\
&= 2 \ln \left( \frac{1+s}{1-s} \frac{\sqrt{2\theta s}}{(1+s)^2}\right) \leq 2 \ln \left( \sqrt{2\theta} \frac{1+s}{1-s}\right)\notag\\
& = \ln(2\theta) + \frac{4}{\theta} F'(s).
\end{align*}
Since both $F'(s)$ and $F'''(s)$ are odd functions, we easily deduce that
$$
\ln \left( |F'''(s)| \right) \leq C_0+ \frac{4}{\theta} |F'(s)|, \quad \forall \, s \in (-1,1),
$$
where $C_0=\ln(2\theta)$ (without loss of generality, we assume in the sequel that $C_0>0$).
Then, using the fact that $F'''(s)F'(s)\geq 0$ for all $s\in (-1,1)$, we obtain
\begin{align*}
|F'''(s)|\ln \left( |F'''(s)| \right) \leq C_0|F'''(s)|+ \frac{4}{\theta} F'''(s) F'(s), \quad \forall \, s \in (-1,1).
\end{align*}
Fix the constant $\alpha \in (0,1)$ such that $F'(\alpha)=1$. We infer that
\begin{equation}
\label{estF'''}
|F'''(s)|\ln \left( |F'''(s)| \right)\leq C_1+ C_2F'''(s) F'(s), \quad \forall \, s \in (-1,1),
\end{equation}
where
$$
C_1= C_0F'''(\alpha), \quad C_2=\frac{4}{\theta} +C_0.
$$
Taking $\varepsilon=\frac{1}{2C_2}$ in \eqref{EE1}, we arrive at
\begin{align}
\int_{\Omega} -F'''(\phi) \rho'(\phi) \frac{|\uu|^2}{2} \, \d x
& \leq \frac{C_1 |\Omega|}{2C_2} + \frac12 \int_{\Omega} F'''(\phi) F'(\phi) \, \d x+
\int_{\Omega} {e}^{C_2 |\rho'(\phi)| |\uu|^2}\, \d x.
\label{EntE2}
\end{align}
Arguing in a similar way (with $\varepsilon= \frac{1}{4C_2}$), we obtain
$$
\int_{\Omega} F'''(\phi) \, (\theta_0 \phi+ \xi) \, \d x \leq \frac{C_1 |\Omega|}{4C_2} + \frac14 \int_{\Omega} F'''(\phi) F'(\phi) \, \d x+
\int_{\Omega} {e}^{4C_2 |\theta_0\phi +\xi |}\, \d x.
$$
Since $\phi$ is globally bounded (i.e., $\|\phi \|_{L^\infty(0,T;L^\infty(\Omega))}\leq 1$) and $\|\xi\|_{L^\infty(0,T)}\leq C^\ast_2$, we get
\begin{equation}
\label{EntE3}
\int_{\Omega} F'''(\phi) \, (\theta_0+\xi) \phi \, \d x \leq \frac14 \int_{\Omega} F'''(\phi) F'(\phi) \, \d x+  \frac{C_1 |\Omega|}{4C_2} +
{e}^{4 C_2 (\theta_0+C_2^\ast)} |\Omega|.
\end{equation}

Combining \eqref{EntE} with \eqref{EntE2} and \eqref{EntE3}, we deduce that
\begin{align}
&\ddt \int_{\Omega} F''(\phi) \, \d x + \frac14 \int_{\Omega} F'''(\phi) F'(\phi) \, \d x \notag  \\
&\quad \leq\frac{3C_1 |\Omega|}{4C_2} +
{e}^{4 C_2 (\theta_0+C_2^\ast)} |\Omega|+  \int_{\Omega} {e}^{ C_2 |\rho'(\phi)| \| \nabla \uu\|_{\L2}^2 \left(\frac{|\uu|^2}{\|\nabla \uu\|_{\L2}^2}\right)}\, \d x.
\label{EntE4}
\end{align}
In order to control the last term on the right-hand side of \eqref{EntE4}, we shall use the Trudinger-Moser inequality in two dimensions (see, e.g., \cite{Moser}). Namely, let $f\in H_0^1(\Omega)$ ($d=2$) such that
$\int_{\Omega} |\nabla f|^2 \,\d x\leq 1$. Then, there exists a constant $C_{TM}=C_{TM}(\Omega)$ (which depends only on the domain $\Omega$) such that
\begin{align}
\int_{\Omega} {e}^{4\pi |f|^2} \, \d x \leq C_{TM}(\Omega).\label{TrM}
\end{align}
As a consequence of \eqref{NSAC3h-D}, we have the following uniform estimate
\begin{equation}
\sup_{t\geq 0 }\| \nabla \uu(t)\|_{\L2}\leq Q \left( E(\uu_0,\phi_0) ,H(0) \right)=:R_0,
\end{equation}
where $R_0$ is independent of time. The exact value of $R_0$ can be estimated in terms of the norm of the initial conditions.
Now we make the following assumptions:
\begin{equation}
\label{Hyp} |\rho'(s)|_{L^\infty(-1,1)}\leq \frac{4 \pi}{C_2 R_0^2}.
\end{equation}
Thanks to \eqref{Hyp}, we conclude from \eqref{EntE4} that
\begin{equation}
\label{EntE5}
\ddt \int_{\Omega} F''(\phi) \, \d x + \frac14 \int_{\Omega} F'''(\phi) F'(\phi) \, \d x
\leq\frac{3C_1 |\Omega|}{4C_2} +
{e}^{4 C_2 (\theta_0+C_2^\ast)} |\Omega|
+ C_{TM}(\Omega).
\end{equation}
Observe that, for $s\in \big[\frac12,1)$,
\begin{align*}
F''(s)=\frac{\theta}{1-s^2}= \frac{(1-s)(1+s)}{2s} F'''(s)\leq 	\frac{3}{4F'\left( \frac12 \right) } F'''(s)F'(s).
\end{align*}
This gives
\begin{equation}
\label{estF''}
F''(s)\leq C_3 + C_4 F'''(s)F'(s), \quad \forall \, s \in (-1,1),
\end{equation}
where
$$
C_3= F''\left( \frac12 \right),  \quad C_4=\frac{3}{4F' \left( \frac12 \right)}.
$$
Hence, we are led to
$$
\ddt \int_{\Omega} F''(\phi) \, \d x + \frac{1}{4 C_4} \int_{\Omega} F''(\phi)  \, \d x
\leq C_5,
$$
where
$$
C_5=\frac{3C_1 |\Omega|}{4C_2} +
{e}^{4 C_2 (\theta_0+C_2^\ast)} |\Omega|+ C_{TM}(\Omega)+ \frac{C_3|\Omega|}{4C_4}.
$$
We recall the assumption $F''(\phi_0)\in L^1(\Omega)$. Then, an application of Gronwall's lemma entails that
\begin{equation}
\label{EB1}
\int_{\Omega} F''(\phi(t)) \, \d x \leq \| F''(\phi_0)\|_{L^1(\Omega)} {e}^{-\frac{t}{4C_4}} + 4 C_4 C_5, \quad \forall \, t \geq 0.
\end{equation}
In addition, integrating \eqref{EntE5} on the time interval $[t,t+1]$, we find
\begin{equation}
\label{EB2}
\int_t^{t+1}\! \int_{\Omega} F'''(\phi) F'(\phi) \,\d x \d \tau \leq 4 \| F''(\phi_0)\|_{L^1(\Omega)} + C_6,  \quad \forall \, t \geq 0,
\end{equation}
where $$C_6= 4 C_5-  \frac{C_3|\Omega|}{C_4}.$$

The above estimate allows us to improve the integrability of $F''(\phi)$. Indeed, arguing similarly to \eqref{estF''}, we have for $s\in \big[ \frac12 , 1)$
\begin{align*}
(F''(s))^2 \ln \left( 1+F''(s)\right)&= \frac{\theta^2}{(1-s)^2(1+s)^2} \ln \left( 1+\frac{\theta}{1-s^2} \right) \\
&\leq  \theta F'''(s) \ln \left( \frac{1+s}{1-s}  \frac{1-s^2 +\theta}{(1+s)^2}\right)\\
&\leq 2F'''(s)F'(s) + \theta F'''(s) \ln \left( \frac12 + \frac{2\theta}{3} \right) \\
&\leq C_7 F'''(s)F'(s).
\end{align*}
Hence, we infer that
$$
(F''(s))^2 \ln \left(1+F''(s) \right)\leq C_7 F'''(s)F'(s)+ C_8, \quad \forall \, s\in (-1,1).
$$
In light of \eqref{EB2}, we deduce \eqref{F''log}. Indeed, we have
\begin{equation}
\label{EB3}
\int_t^{t+1}\! \int_{\Omega} (F''(\phi))^2 \ln \left( 1+F''(\phi) \right) \,\d x \d \tau \leq
4 C_7 \| F''(\phi_0)\|_{L^1(\Omega)} + C_6 C_7 +C_8, \quad \forall \, t \geq 0.
\end{equation}

We notice that, by keeping the (non-negative) term $F^{(4)}(\phi)|\nabla \phi|^2$ (cf. \eqref{EntE-}) on the left-hand side of \eqref{EntE5} in the above argument, we can  also deduce that
$$
\int_{t}^{t+1}\! \int_{\Omega} F^{(4)}(\phi)|\nabla \phi|^2 \, \d x \d \tau
\leq C_9, \quad \forall \, t \geq 0,
$$
where $C_9$ depends on $\|F''(\phi_0)\|_{L^1(\Omega)}$, $R_0$, $\theta$, $\theta_0$ and $\Omega$.
Since $$ \left(\frac{s}{\sqrt{1-s^2}}\right)'=(1-s^2)^{-\frac32},\quad \forall\, s\in (-1,1),
$$ we infer  that
$$
\int_{t}^{t+1}\! \int_{\Omega} \left| \nabla \left( \frac{\phi}{\sqrt{1-\phi^2}}\right)\right|^2 \, \d x \d \tau  \leq \frac{C_9}{2\theta}, \quad \forall \, t \geq 0.
$$
Setting $\psi= \frac{\phi}{\sqrt{1-\phi^2}}$, and observing that
$$F''(s)= \theta \left[ \left( \frac{s}{\sqrt{1-s^2}}\right)^2 +1\right],$$
we have (cf. \eqref{EntE5})
$$
\| \psi(t)\|_{\L2}^2+ \int_t^{t+1}\, \| \nabla \psi(\tau)\|_{\L2}^2 \, \d \tau \leq C_{10}, \quad \forall \, t \geq 0.
$$
This implies that
$$
\psi \in L^\infty(0,T;\L2)\cap L^2(0,T;H^1(\Omega)),\quad \forall\, T>0.
$$ By the Sobolev embedding theorem in two dimensions, we also have
$$\psi \in L^q(0,T;L^p(\Omega)),\quad  \text{with}\ \ \frac12=\frac{1}{p}+\frac{1}{q}, \ \ \forall\, p \in (2,\infty).
$$
As a consequence, we conclude that
\begin{equation}
\label{F''Lp}
\int_t^{t+1}\! \| F''(\phi(\tau))\|_{L^p(\Omega)}^q \, \d \tau \leq C_{11}, \quad \forall \, t \geq 0,
\end{equation}
 for any $p,q\in (1,\infty)$ satisfying $\frac{1}{p}+\frac{1}{q}=1$.
\medskip

\textbf{Uniqueness of strong solutions.}
Let us consider two strong solutions $(\uu_1,\phi_1,P_1)$ and $(\uu_2,\phi_2.P_2)$ to system \eqref{NSAC-D}-\eqref{IC-D} satisfying the entropy bound \eqref{F''log}  and originating from the same initial datum.
The solutions difference $(\uu,\phi, P):=(\uu_1-\uu_2, \phi_1-\phi_2, P_1-P_2)$ solves
\begin{align}
&\rho(\phi_1)\big( \partial_t \uu + \uu_1 \cdot \nabla \uu + \uu \cdot \nabla \uu_2 \big)- \div \big( \nu(\phi_1)D\uu\big)+ \nabla P \notag \\
&\qquad  = - \Delta \phi_1 \nabla \phi -\Delta \phi \nabla \phi_2 - (\rho(\phi_1)-\rho(\phi_2)) (\partial_t \uu_2 + \uu_2 \cdot \nabla \uu_2) \notag \\
&\qquad\quad  + \div \big( (\nu(\phi_1)-\nu(\phi_2))D\uu_2\big)
\label{D-Diff1}
\end{align}
and
\begin{align}
&\partial_t \phi +\uu_1\cdot \nabla \phi +\uu \cdot \nabla \phi_2
-\Delta \phi + \Psi' (\phi_1)-\Psi'(\phi_2) \notag \\
&\qquad = - \rho'(\phi_1)\frac{|\uu_1|^2}{2}+ \rho'(\phi_2)\frac{|\uu_2|^2}{2}
+\xi_1-\xi_2,
\label{D-Diff2}
\end{align}
for almost every $(x,t) \in \Omega \times (0,T)$, together with the incompressibility constraint $\div \uu=0$.

It easily follows that $\overline{\phi}(t)= 0$ for all $t\geq 0$. Multiplying \eqref{D-Diff1} by $\uu$ and integrating over $\Omega$, we obtain
\begin{align}
&\ddt \int_{\Omega} \frac{\rho(\phi_1)}{2} |\uu|^2 \, \d x
+\int_{\Omega} \nu(\phi_1)|D \uu|^2 \, \d x \notag \\
&\quad =
-\int_{\Omega}  \rho(\phi_1) (\uu_1 \cdot \nabla) \uu \cdot \uu \, \d x
-\int_{\Omega} \rho(\phi_1) (\uu\cdot \nabla )\uu_2 \cdot \uu \, \d x
-\int_{\Omega} \Delta \phi_1 \nabla \phi \cdot \uu \, \d x \notag \\
&\qquad
- \int_{\Omega} \Delta \phi\nabla \phi_2 \cdot \uu \, \d x
- \int_{\Omega} (\rho(\phi_1)-\rho(\phi_2)) (\partial_t \uu_2 + \uu_2 \cdot \nabla \uu_2) \cdot \uu \, \d x  \notag \\
&\qquad - \int_{\Omega} (\nu(\phi_1)-\nu(\phi_2))D\uu_2 : D \uu \, \d x + \int_{\Omega} \frac12 |\uu|^2 \rho'(\phi_1) \partial_t \phi_1 \,\d x.
\label{D1}
\end{align}
Besides, multiplying \eqref{D-Diff2} by $-\Delta \phi$ and integrating over $\Omega$, we find
\begin{align}
&\ddt \int_{\Omega} \frac12 |\nabla \phi|^2 \,\d x + \|\Delta \phi \|_{\L2}^2\notag \\
&\quad  = \int_{\Omega} (\uu_1 \cdot \nabla \phi) \, \Delta \phi \, \d x
+ \int_{\Omega} (\uu\cdot \nabla \phi_2) \, \Delta \phi \,\d x + \int_{\Omega} (F'(\phi_1)-F'(\phi_2)) \Delta \phi \, \d x\notag  \\
&\qquad
+ \theta_0 \|\nabla \phi\|_{\L2}^2 + \int_{\Omega}
\left(  \rho'(\phi_1)\frac{|\uu_1|^2}{2}- \rho'(\phi_2)\frac{|\uu_2|^2}{2} \right) \Delta \phi \, \d x.
\label{D2}
\end{align}
Here we have used the fact that $\overline{\Delta \phi}=0$ thanks to the homogeneous Neumann boundary condition for $\phi_1$, $\phi_2$, which implies that $\int_{\Omega} (\xi_1-\xi_2) \Delta \phi \, \d x=0$.
Adding \eqref{D1} and \eqref{D2}, together with the bound from below of the viscosity function, we deduce that
\begin{align*}
&\ddt \left( \int_{\Omega} \frac{\rho(\phi_1)}{2} |\uu|^2 \, \d x +
\int_{\Omega} \frac12 |\nabla \phi|^2 \,\d x \right)
+ \nu_\ast \|D \uu\|_{\L2}^2
+ \|\Delta \phi \|_{\L2}^2 \\
&\quad \leq  -\int_{\Omega}  \rho(\phi_1) (\uu_1 \cdot \nabla) \uu \cdot \uu \, \d x-\int_{\Omega} \rho(\phi_1) (\uu\cdot \nabla )\uu_2 \cdot \uu \, \d x
-\int_{\Omega} \Delta \phi_1 \nabla \phi \cdot \uu \, \d x  \\
&\qquad - \int_{\Omega} (\rho(\phi_1)-\rho(\phi_2)) (\partial_t \uu_2 + \uu_2 \cdot \nabla \uu_2) \cdot \uu \, \d x - \int_{\Omega} (\nu(\phi_1)-\nu(\phi_2))D\uu_2 : D \uu \, \d x  \\
&\qquad + \int_{\Omega} \frac12 |\uu|^2 \rho'(\phi_1) \partial_t \phi_1 \,\d x +
\int_{\Omega} (\uu_1 \cdot \nabla \phi) \, \Delta \phi \, \d x  + \int_{\Omega} (F'(\phi_1)-F'(\phi_2)) \Delta \phi \, \d x \\
&\qquad
+ \theta_0 \|\nabla \phi\|_{\L2}^2+ \int_{\Omega}
\left(  \rho'(\phi_1)\frac{|\uu_1|^2}{2}- \rho'(\phi_2)\frac{|\uu_2|^2}{2} \right) \Delta \phi \, \d x.
\end{align*}

We now proceed by estimating the terms on the right hand side of the above differential inequality. We would like to mention that most of the bounds obtained below are simple applications of the Sobolev embedding theorem and interpolation inequalities in view of the estimates for global strong solutions that have been obtained before. Nevertheless, less standard is the estimate of $F'(\phi_1)-F'(\phi_2)$ for which we have to make use of the entropy estimate \eqref{EB3}.

To this end, by using the regularity of strong solutions, \eqref{KORN} and \eqref{LADY}, we have
\begin{align*}
-\int_{\Omega}  \rho(\phi_1) (\uu_1 \cdot \nabla) \uu \cdot \uu \, \d x
&\leq C \| \uu_1\|_{L^\infty(\Omega)}\| \nabla \uu\|_{\L2} \| \uu\|_{\L2}\\
&\leq \frac{\nu_\ast}{12} \| D \uu\|_{\L2}^2 +C \| \uu_1\|_{L^\infty(\Omega)}^2\| \uu\|_{\L2}^2,
\end{align*}
\begin{align*}
-\int_{\Omega} \rho(\phi_1) (\uu\cdot \nabla )\uu_2 \cdot \uu \, \d x
& \leq C \| \nabla \uu_2\|_{\L2} \| \uu\|_{L^4(\Omega)}^2\\
& \leq \frac{\nu_\ast}{12}  \|D \uu \|_{\L2}^2+ C\| \uu\|_{\L2}^2,
\end{align*}
\begin{align*}
-\int_{\Omega} \Delta \phi_1 \nabla \phi \cdot \uu \, \d x
& \leq C \|\Delta \phi_1 \|_{L^4(\Omega)} \| \nabla \phi\|_{\L2} \| \uu\|_{L^4(\Omega)}\\
& \leq \frac{\nu_\ast}{12}  \| D \uu\|_{\L2}^2+ C \|\Delta \phi_1 \|_{L^4(\Omega)}^2 \| \nabla \phi\|_{\L2}^2,
\end{align*}
\begin{align*}
&- \int_{\Omega} (\rho(\phi_1)-\rho(\phi_2)) (\partial_t \uu_2 + \uu_2 \cdot \nabla \uu_2) \cdot \uu \, \d x \notag\\
&\quad \leq C \| \phi\|_{L^4(\Omega)} \| \partial_t \uu_2 + \uu_2 \cdot \nabla \uu_2\|_{\L2} \| \uu\|_{L^4(\Omega)}\\
&\quad \leq C \| \nabla \phi\|_{\L2} \| \partial_t \uu_2 + \uu_2 \cdot \nabla \uu_2\|_{\L2} \| \nabla \uu\|_{\L2}\\
&\quad \leq \frac{\nu_\ast}{12}  \| D \uu\|_{\L2}^2 + C \| \partial_t \uu_2 + \uu_2 \cdot \nabla \uu_2\|_{\L2}^2 \| \nabla \phi\|_{\L2}^2,
\end{align*}
\begin{align*}
- \int_{\Omega} (\nu(\phi_1)-\nu(\phi_2))D\uu_2 : D \uu \, \d x
&\leq C \|\phi \|_{L^4(\Omega)} \| D \uu_2\|_{L^4(\Omega)} \| D \uu\|_{\L2}\\
&\leq \frac{\nu_\ast}{12}  \| D \uu\|_{\L2}^2 +
C \| D \uu_2\|_{L^4(\Omega)}^2 \| \nabla \phi\|_{\L2}^2,
\end{align*}
\begin{align*}
\int_{\Omega} \frac12 |\uu|^2 \rho'(\phi_1) \partial_t \phi_1 \,\d x
& \leq C \| \uu\|_{L^4(\Omega)}^2 \| \partial_t \phi_1 \|_{\L2}\\
& \leq \frac{\nu_\ast}{12}  \| D \uu\|_{\L2}^2+ C \| \uu\|_{\L2}^2,
\end{align*}
\begin{align*}
\int_{\Omega} (\uu_1 \cdot \nabla \phi) \, \Delta \phi \, \d x
& \leq \|\uu_1 \|_{L^\infty(\Omega)} \| \nabla \phi\|_{\L2} \| \Delta \phi\|_{\L2}\\
& \leq \frac{1}{6}  \| \Delta \phi\|_{\L2}^2+ C \|\uu_1 \|_{L^\infty(\Omega)}^2 \| \nabla \phi\|_{\L2}^2,
\end{align*}
\begin{align*}
\int_{\Omega}
&\left(  \rho'(\phi_1)\frac{|\uu_1|^2}{2}- \rho'(\phi_2)\frac{|\uu_2|^2}{2} \right) \Delta \phi \, \d x\\
&= \int_{\Omega} \Big( \rho'(\phi_1)-\rho'(\phi_2) \Big) \frac{|\uu_1|^2}{2} \Delta \phi \, \d x + \int_{\Omega} \frac{\rho'(\phi_2)}{2} \Big( \uu_1\cdot \uu+ \uu\cdot \uu_2 \Big) \Delta \phi \, \d x\\
&\leq C \| \phi\|_{L^4(\Omega)}\| \uu_1\|_{L^8(\Omega)}^2 \| \Delta \phi\|_{\L2} + C \| \uu\|_{\L2} (\| \uu_1\|_{L^\infty(\Omega)}+\| \uu_2\|_{L^\infty(\Omega)}) \| \Delta \phi\|_{L^2(\Omega)}\\
&\leq \frac{1}{6}  \| \Delta \phi\|^2_{L^2(\Omega)} + C\| \nabla \phi\|_{\L2}^2+
C (\| \uu_1\|_{L^\infty(\Omega)}^2+\| \uu_2\|_{L^\infty(\Omega)}^2) \| \uu\|_{\L2}^2.
\end{align*}
Next, using the generalized Young's inequality \eqref{Young0} and the standard Young's inequality, for $x>0$, $y>0$, $z>0$ with $Cz>y$, we obtain
\begin{align}
x^2 y^2 \ln \left( \frac{Cz}{y} \right)
&\leq xy^2\left(x\ln x+\frac{Cz}{y}\right) \notag \\
& \leq \varepsilon z^2+  x^2 y^2 \ln x +C^2\varepsilon ^{-1}x^2y^2, \quad \forall \, \varepsilon>0.
\label{ineqy}
\end{align}
Then from \eqref{BGI} and \eqref{ineqy}, we find that
\begin{align*}
& \int_{\Omega} (F'(\phi_1)-F'(\phi_2)) \Delta \phi \, \d x\\
&\quad =  \int_{\Omega} \int_0^1 F''(\tau\phi_1+(1-\tau)\phi_2) \, \d \tau\, \phi \Delta \phi \, \d x\\
&\quad \leq C\left( \| F''(\phi_1)\|_{\L2}+ \| F''(\phi_2)\|_{\L2} \right) \| \phi\|_{L^\infty(\Omega)}\| \Delta \phi\|_{\L2}\\
&\quad \leq C\left( \| F''(\phi_1)\|_{\L2}+ \| F''(\phi_2)\|_{\L2} \right) \| \nabla \phi\|_{\L2} \ln^\frac12 \left( C \frac{\|\Delta \phi \|_{\L2}}{\| \nabla \phi\|_{\L2}} \right) \| \Delta \phi\|_{\L2}\\
&\quad \leq \frac{1}{12} \| \Delta \phi\|_{\L2}^2 +C
\left(  \| F''(\phi_1)\|_{\L2}^2+ \| F''(\phi_2)\|_{\L2}^2 \right) \| \nabla \phi\|_{\L2}^2 \ln \left( C \frac{\|\Delta \phi \|_{\L2}}{\| \nabla \phi\|_{\L2}} \right)\\
 &\quad \leq \frac{1}{6} \| \Delta \phi\|_{\L2}^2 +
C \| F''(\phi_1)\|_{\L2}^2 \left( 1+ \ln \left(  \| F''(\phi_1)\|_{\L2} \right) \right)
\| \nabla \phi\|_{\L2}^2 \\
&\qquad  + C \| F''(\phi_2)\|_{\L2}^2 \left( 1+ \ln \left(  \| F''(\phi_2)\|_{\L2} \right) \right) \| \nabla \phi\|_{\L2}^2 .
\end{align*}

Collecting the above estimates, we deduce the following differential inequality
\begin{align}
&\ddt \left( \int_{\Omega} \frac{\rho(\phi_1)}{2} |\uu|^2 \, \d x +
\int_{\Omega} \frac12 |\nabla \phi|^2 \,\d x \right)
+ \frac{\nu_\ast}{2} \int_{\Omega} |D \uu|^2 \, \d x +\frac12 \|\Delta \phi \|_{\L2}^2 \notag  \\
&\quad \leq W_1(t) \int_{\Omega} \frac{\rho(\phi_1)}{2} |\uu|^2 \, \d x+ W_2(t) \| \nabla \phi\|_{\L2}^2,
\label{D-DI}
\end{align}
where
\begin{align*}
W_1(t)= C \left( 1+ \| \uu_1\|_{L^\infty(\Omega)}^2+  \| \uu_2\|_{L^\infty(\Omega)}^2+ \| \partial_t \uu_2 + \uu_2 \cdot \nabla \uu_2\|_{\L2}^2\right),
\end{align*}
and
\begin{align*}
W_2(t)&=  C \left( 1+ \| \Delta \phi_1\|_{L^4(\Omega)}^2+\| \partial_t \uu_2 + \uu_2 \cdot \nabla \uu_2\|_{\L2}^2+ \| D\uu_2\|_{L^4(\Omega)}^2+
\| \uu_1\|_{L^\infty(\Omega)}^2 \right) \\
 &\quad + C \| F''(\phi_1)\|_{\L2}^2  \ln \left(  \| F''(\phi_1)\|_{\L2} \right) +
 C \| F''(\phi_2)\|_{\L2}^2  \ln \left(  \| F''(\phi_2)\|_{\L2} \right).
\end{align*}
Here we have used that $\rho(s)\geq \rho_\ast$ for all $s \in (-1,1)$.

In order to apply Gronwall's lemma, we are left to show that
\begin{equation}
\label{F''2log}
\int_0^T  \| F''(\phi_i)\|_{\L2}^2  \ln \left( \| F''(\phi_i)\|_{\L2} \right) \, \d \tau\leq C(T), \quad i=1,2.
\end{equation}
To this aim, we introduce the function
$$
g(s)= s \ln ( C^\ast s), \quad \forall \, s\in (0,\infty),
$$
where $C^\ast$ is a positive constant. It is easily seen that $g$ is continuous and convex $\left( g''(s)= \frac{1}{s}>0\right)$.  By applying Jensen's inequality, we have
$$
g \left( \frac{1}{|\Omega|}\int_{\Omega} |F''(\phi)|^2 \, \d x \right)
\leq \frac{1}{|\Omega|} \int_{\Omega} g \left( |F''(\phi)|^2 \right) \, \d x.
$$
Using the explicit form of $g$, this is equivalent to
\begin{align*}
\frac{1}{|\Omega|}\|F''(\phi)\|_{\L2}^2  \ln \left( \frac{C^\ast}{|\Omega|}
\|F''(\phi)\|_{\L2}^2  \right) \leq \frac{1}{|\Omega|}
\int_{\Omega} |F''(\phi)|^2 \ln (C^\ast |F''(\phi)|^2)  \, \d x.
\end{align*}
Taking $C^\ast= |\Omega|$ and integrating the above inequality over $[0,T]$, we find
\begin{equation}
\label{jensen}
\int_0^T \|F''(\phi)\|_{\L2}^2  \ln \big(  \|F''(\phi)\|_{\L2}  \big) \, \d \tau \leq  \int_{0}^T\! \int_{\Omega}  |F''(\phi)|^2 \ln \left(  |\Omega| |F''(\phi)|^2 \right)  \, \d x \d \tau.
\end{equation}
Then, \eqref{F''2log} immediately follows from the entropy bounds \eqref{EB3} and \eqref{jensen}. As a consequence, both $W_1$ and $W_2$ belong to $L^1(0,T)$, for any $T>0$.
Finally, an application of the Gronwall lemma entails the uniqueness of strong solutions.

The proof of Theorem \ref{strong-D} is complete.
\hfill$\square$
\medskip

\begin{remark}[Entropy Estimates in $L^p$, $p>1$]
Notice that the entropy estimate in $L^1(\Omega)$ proved in Theorem \ref{strong-D}-(2) can be generalized to the $L^p(\Omega)$ case with $p>1$. More precisely, for any $p\in \mathbb{N}$, there exists $\eta_p>0$ with the latter depending on the norms of the initial data and on the parameters of the system
$$
\eta_p=\eta_p(E(\uu_0,\phi_0), \| \uu_0\|_{\V_\sigma}, \| \phi_0\|_{H^2(\Omega)},\| F'(\phi_0)\|_{\L2},\theta,\theta_0)
$$
such that, if
$$\|\rho'\|_{L^\infty(-1,1)}\leq \eta_p\quad \text{and}\quad F''(\phi_0)\in L^p(\Omega),$$ then for any $T>0$, we have
\begin{align*}
F''(\phi)\in L^\infty(0,T;L^p(\Omega)),\quad |F''(\phi)|^{p-1}F'''(\phi) F'(\phi)\in L^1(\Omega\times (0,T)).
\end{align*}
These results follow from the above proof by replacing $\ddt \int_{\Omega} F''(\phi) \, \d x$ by $\ddt \int_{\Omega} (F''(\phi))^p \, \d x$, and the observation that, for any $p>2$, there exist two positive constants $C^{(1)}_p$ and $C^{(2)}_p$ such that
$$
|(F''(s))^{p-1}F'''(s)| \ln \big( |(F''(s))^{p-1}F'''(s)| \big) \leq
C^{(1)}_p+C^{(2)}_p (F''(s))^{p-1}F'''(s) F'(s), \quad \forall \, s \in (-1,1).
$$
\end{remark}
\smallskip

\subsection{Proof of Theorem \ref{Proreg-D}}
We now prove the propagation of entropy bound as stated
in Theorem \ref{Proreg-D}.
For every strong solution given by Theorem \ref{strong-D}-(1), we have the following estimates
\begin{align*}
\|\uu\cdot \nabla \phi\|_{H^1(\Omega)}&\leq \|\uu\|_{L^4(\Omega)}\|\nabla \phi\|_{L^4}+\|\nabla \uu\|_{L^4(\Omega)}\|\nabla \phi\|_{L^4(\Omega)}+\|\uu\|_{L^\infty(\Omega)}\|\phi\|_{H^2(\Omega)}\nonumber\\
&\leq C+C\|\uu\|_{H^2(\Omega)}^\frac12\|\phi\|_{H^2(\Omega)}^\frac12+C\|\phi\|_{H^2(\Omega)},
\end{align*}
and
\begin{align*}
\|\rho'(\phi)|\uu|^2\|_{H^1(\Omega)}
&\leq C\|\uu\|_{L^4(\Omega)}^2+C\|\nabla \phi\|_{L^\infty(\Omega)}\|\uu\|_{L^4(\Omega)}^2+C\|\nabla \uu\|_{L^4(\Omega)}\|\uu\|_{L^4(\Omega)}\nonumber\\
&\leq C+C\|\phi\|_{W^{2,3}(\Omega)}+C\|\uu\|_{H^2(\Omega)},
\end{align*}
which imply that
$$
\int_t^{t+1} \| \uu(\tau)\cdot \nabla \phi(\tau)\|_{H^1(\Omega)}^2+ \left\|\rho'(\phi(\tau))\frac{|\uu(\tau)|^2}{2} \right\|_{H^1(\Omega)}^2 \, \d \tau \leq C, \quad \forall \, t \geq 0,
$$
for some positive constant $C$ independent of $t$. In light of \eqref{NSAC4}, it follows that $$
\int_t^{t+1} \|-\Delta \phi(\tau) +F'(\phi(\tau))\|_{H^1(\Omega)}^2 \, \d \tau \leq C, \quad \forall \, t \geq 0.
 $$
By using \cite[Lemma 7.4]{GGW2018}, we infer that, for any $p\geq 1$, there exists some $C=C(p)>0$ such that
\begin{equation}
\label{propF''}
\| F''(\phi)\|_{L^p(\Omega)}\leq  C\left( 1+{e}^{C\| -\Delta \phi +F'(\phi)\|_{H^1(\Omega)}^2} \right) \quad \text{a.e. in}\ (0,T),
\end{equation}
for any $T>0$. Notice that we are not able to conclude that the right-hand side of \eqref{propF''} belongs to $L^1(0,T)$. Nevertheless, since an integrable function is finite almost everywhere, the above inequality entails that there exists some $\zeta \in (0,1)$ (actually $\zeta$ can be taken arbitrarily small but positive) such that
\begin{align}
F''(\phi(\zeta))\in L^p(\Omega) \quad\text{with}\quad \| F''(\phi(\zeta))\|_{L^p(\Omega)}\leq C(p,\zeta),\quad \forall\,p \in [1,\infty).\label{FttLp}
\end{align}
Then, under the condition \eqref{Hyp} but without the additional assumption $F''(\phi_0)\in L^1(\Omega)$ on the initial datum, we are able to deduce that the previous estimates \eqref{EB1}-\eqref{EB3}  hold for all $t\geq \zeta>0$. More precisely, we have
\begin{equation}
\label{EB-sig}
\int_t^{t+1}\! \int_{\Omega} (F''(\phi))^2 \ln ( 1+F''(\phi)) \,\d x \d \tau \leq C(\zeta), \quad \forall \,  t \geq \zeta.
\end{equation}

Differentiating \eqref{NSAC-D}$_1$ with respect to time and testing the resultant by $\partial_t\uu$, integrating over $\Omega$, we have
\begin{align*}
& \frac12 \int_{\Omega} \rho(\phi) \partial_t |\partial_t \uu|^2 \, \d x
+\int_{\Omega} \rho(\phi) \big( \partial_t \uu \cdot \nabla \uu + \uu \cdot \nabla \partial_t \uu\big)\cdot \partial_t \uu \, \d x \\
&\qquad + \int_{\Omega} \rho'(\phi)\partial_t \phi (\partial_t \uu + \uu \cdot \nabla \uu) \cdot \partial_t \uu \, \d x + \int_{\Omega} \nu(\phi) |D \partial_t \uu|^2 \, \d x+ \int_{\Omega} \nu'(\phi) \partial_t \phi D \uu : D \partial_t \uu \, \d x\\
&\quad =  \int_\Omega \partial_t(\nabla \phi\otimes \nabla \phi):\nabla \partial_t\uu \, \d x.
\end{align*}
Since
\begin{align*}
&\frac12  \int_{\Omega} \rho(\phi) \partial_t |\partial_t \uu|^2 \, \d x
= \frac12 \ddt \int_{\Omega} \rho(\phi)|\partial_t \uu|^2 \,\d x - \frac12 \int_{\Omega}  \rho'(\phi)\partial_t \phi |\partial_t \uu|^2\, \d x,
\end{align*}
we find that
\begin{align}
&\frac12\frac{\d}{\d t} \int_{\Omega} \rho(\phi)|\partial_t \uu|^2 \, \d x
 +\int_\Omega \nu(\phi)|D\partial_t \uu|^2 \, \d x\nonumber\\
&\quad =-\int_\Omega \rho(\phi)(\partial_t\uu\cdot \nabla \uu+ \uu\cdot \nabla \partial_t \uu)\cdot \partial_t\uu \, \d x
- \frac12 \int_{\Omega}  \rho'(\phi)\partial_t \phi |\partial_t \uu|^2\, \d x \notag \\
&\qquad -\int_{\Omega} \rho'(\phi)\partial_t \phi (\uu \cdot \nabla \uu) \cdot \partial_t \uu \, \d x - \int_\Omega \nu'(\phi)\partial_t \phi D\uu: \nabla \partial_t\uu \notag \\
&\qquad + \int_\Omega \partial_t(\nabla \phi\otimes \nabla \phi):\nabla \partial_t\uu \, \d x. \label{DDD1}
\end{align}
In view of \eqref{str-1}, by using \eqref{LADY}, we have
\begin{align}
-\int_{\Omega} \rho(\phi)(\partial_t \uu \cdot \nabla \uu) \cdot \partial_t \uu \, \d x &\leq C \| \partial_t \uu\|_{L^4(\Omega)}^2 \|\nabla \uu\|_{\L2} \notag \\
&\leq \frac{\nu_\ast}{16} \| D \partial_t \uu\|_{\L2}^2+ C\| \partial_t \uu\|_{\L2}^2,
\nonumber
\end{align}
and
\begin{align*}
-\int_{\Omega} \rho(\phi)(\uu \cdot \nabla \partial_t \uu) \cdot \partial_t \uu \, \d x &\leq C \| \uu\|_{L^4(\Omega)} \| \nabla \partial_t \uu\|_{\L2} \| \partial_t \uu\|_{L^4(\Omega)}\\
&\leq  \frac{\nu_\ast}{16} \| D \partial_t \uu\|_{\L2}^2+ C\| \partial_t \uu\|_{\L2}^2.
\end{align*}
In a similar manner, we obtain
\begin{align*}
-\frac12 \int_{\Omega} \rho'(\phi)\partial_t \phi |\partial_t \uu|^2\, \d x
&\leq C \| \partial_t \phi\|_{\L2} \| \partial_t \uu \|_{L^4(\Omega)}^2\\
&\leq \frac{\nu_\ast}{16} \| D \partial_t \uu\|_{\L2}^2+ C\| \partial_t \uu\|_{\L2}^2,
\end{align*}
and
\begin{align*}
 -\int_{\Omega} \rho'(\phi)\partial_t \phi (\uu \cdot \nabla \uu) \cdot \partial_t \uu \, \d x
& \leq  C \| \partial_t \phi\|_{L^4(\Omega)} \| \uu \|_{L^4(\Omega)}
\| \nabla \u\|_{L^4(\Omega)} \| \partial_t \uu\|_{L^4(\Omega)}\\
&  \leq C \| \nabla \partial_t \phi\|_{\L2}^\frac12 \| \uu\|_{H^2(\Omega)}^\frac12 \| \partial_t \uu\|_{\L2}^\frac12 \| D \partial_t \uu\|_{\L2}^\frac12\\
&  \leq \frac{\nu_\ast}{16} \| D \partial_t \uu\|_{\L2}^2
+ C\| \partial_t \uu\|_{\L2}^2 +C \| \nabla \partial_t \phi\|_{\L2}^2+C\| \uu\|_{H^2(\Omega)}^2.
\end{align*}
Besides, by means of \eqref{Agmon2d}, we can deduce that
\begin{align*}
 -\int_{\Omega} \nu'(\phi) \partial_t \phi D \uu : D \partial_t \uu \, \d x
& \leq C \| \partial_t \phi\|_{L^\infty(\Omega)} \| D \uu\|_{L^2(\Omega)} \| D \partial_t \uu \|_{\L2}  \\
&  \leq  \frac{\nu_\ast}{16} \| D \partial_t \uu\|_{\L2}^2 +C \| \partial_t \phi\|_{\L2} \|  \partial_t \phi\|_{H^2(\Omega)}   \\
&  \leq  \frac{\nu_\ast}{16} \| D \partial_t \uu\|_{\L2}^2 + \frac{1}{14} \| \Delta \partial_t \phi\|_{\L2}^2 +C \| \nabla \partial_t \phi\|_{\L2} ^2,
\end{align*}
and
\begin{align*}
& \int_\Omega \partial_t(\nabla \phi\otimes \nabla \phi):\nabla \partial_t\uu \, \d x  \\ &\quad \leq
\|\nabla \phi\|_{L^4(\Omega)}\|\nabla \partial_t \phi\|_{L^4(\Omega)}\|D \partial_t\uu\|_{L^2(\Omega)}\\
&\quad  \leq \frac{\nu_\ast}{16} \| D \partial_t \uu\|_{\L2}^2
 +C\|\nabla \partial_t\phi\|_{L^2(\Omega)}\|\nabla \partial_t \phi\|_{H^1(\Omega)}\\
&\quad  \leq  \frac{\nu_\ast}{16} \| D \partial_t \uu\|_{\L2}^2
+ \frac{1}{14} \| \Delta \partial_t \phi\|_{\L2}^2 +C \| \nabla \partial_t \phi\|_{\L2} ^2.
\end{align*}

Next, we differentiate \eqref{NSAC-D}$_3$ with respect to time, multiply the resultant by $-\Delta \partial_t \phi$, and integrate over $\Omega$ to obtain
\begin{align}
& \frac12 \ddt  \| \nabla \partial_t \phi\|_{\L2}^2+ \| \Delta \partial_t \phi\|_{\L2}^2
 \notag \\
&= \theta_0 \|\nabla  \partial_t \phi\|_{\L2}^2+ \int_{\Omega} F''(\phi) \partial_t \phi \Delta \partial_t \phi \, \d x+\int_{\Omega} (\partial_t \uu \cdot \nabla \phi) \Delta \partial_t \phi\, \d x \notag  \\
&\quad +  \int_{\Omega} (\uu \cdot \nabla \partial_t \phi) \Delta \partial_t \phi\, \d x +\frac12 \int_{\Omega} \rho''(\phi) \partial_t \phi |\uu|^2  \Delta \partial_t \phi \, \d x \notag  \\
&\quad + \int_{\Omega} \rho'(\phi)( \uu \cdot \partial_t \uu)  \Delta \partial_t \phi \, \d x.\label{DDD2}
\end{align}
Here we have used that $\overline{\Delta \partial_t \phi}=0$ since $\partial_\n \partial_t \phi=0$ almost everywhere on $\partial \Omega\times(0,T)$.
Exploiting \eqref{BGI}, we get
\begin{align*}
&\int_{\Omega} F''(\phi) \partial_t \phi \Delta \partial_t \phi \, \d x\\
&\quad \leq \| F''(\phi)\|_{\L2} \| \partial_t \phi\|_{L^\infty(\Omega)} \| \Delta \partial_t \phi \|_{\L2}\\
&\quad \leq  \| F''(\phi)\|_{\L2} \| \nabla \partial_t \phi\|_{\L2} \ln^\frac12
\left( C\frac{\| \Delta \partial_t \phi\|_{\L2}}{\| \nabla \partial_t \phi\|_{\L2}} \right)\| \Delta \partial_t \phi \|_{\L2}\\
&\quad \leq \frac{1}{28} \| \Delta \partial_t \phi \|_{\L2}^2 +C \| F''(\phi)\|_{\L2}^2 \| \nabla \partial_t \phi\|_{\L2}^2 \ln \left( C\frac{\| \Delta \partial_t \phi\|_{\L2}}{\| \nabla \partial_t \phi\|_{\L2}}\right).
\end{align*}
Recalling \eqref{ineqy}, we find
\begin{align}
\int_{\Omega} F''(\phi) \partial_t \phi \Delta \partial_t \phi \, \d x  \leq \frac{1}{14} \| \Delta \partial_t \phi \|_{\L2}^2 +C \| F''(\phi)\|_{\L2}^2  \ln \left( C\| F''(\phi)\|_{\L2} \right)\| \nabla \partial_t \phi\|_{\L2}^2.
\label{FFF}
\end{align}
Besides, using \eqref{LADY} and \eqref{str-1}, we get
\begin{align}
\int_{\Omega} (\partial_t \uu \cdot \nabla \phi) \Delta \partial_t \phi\, \d x
&\leq \|\partial_t \uu \|_{L^4(\Omega)}\|\nabla \phi\|_{L^4(\Omega)}\|\Delta \partial_t \phi\|_{\L2}\nonumber\\
&\leq  \frac{\nu_\ast}{16} \| D \partial_t \uu\|_{\L2}^2
+  \frac{1}{14}\| \Delta \partial_t \phi\|_{\L2}^2 + C \|\partial_t \uu \|_{L^2(\Omega)}^2, \nonumber
\end{align}
and
\begin{equation}
\label{unpt}
\begin{split}
\int_{\Omega} (\uu \cdot \nabla \partial_t \phi) \Delta \partial_t \phi\, \d x& \leq \| \uu\|_{L^4(\Omega)} \| \nabla \partial_t \phi\|_{L^4(\Omega)}
\| \Delta \partial_t \phi\|_{\L2} \\
&\leq \frac{1}{14} \| \Delta \partial_t \phi\|_{\L2}^2 + C \| \nabla \partial_t \phi\|_{\L2}^2.
\end{split}
\end{equation}
Finally, in a similar manner, we obtain that
\begin{align*}
\frac12 \int_{\Omega} \rho''(\phi) \partial_t \phi |\uu|^2  \Delta \partial_t \phi \, \d x
&\leq C \| \partial_t \phi\|_{L^4(\Omega)} \| \uu\|_{L^8(\Omega)}^2 \| \Delta \partial_t \phi\|_{\L2}\\
&\leq \frac{1}{14} \| \Delta \partial_t \phi\|_{\L2}^2 + C \| \nabla \partial_t \phi\|_{\L2}^2,
\end{align*}
and
\begin{align*}
 \int_{\Omega} \rho'(\phi)( \uu \cdot \partial_t \uu)  \Delta \partial_t \phi \, \d x
 &\leq C \| \uu\|_{L^4(\Omega)} \| \partial_t \uu \|_{L^4(\Omega)} \|  \Delta \partial_t \phi\|_{\L2}\\
 &\leq \frac{\nu_\ast}{16} \| D \partial_t \uu\|_{\L2}^2
+  \frac{1}{14}\| \Delta \partial_t \phi\|_{\L2}^2 + C \|\partial_t \uu \|_{L^2(\Omega)}^2.
\end{align*}

Combining the above estimates, we deduce from \eqref{DDD1} and \eqref{DDD2} that
\begin{align}
\frac{\d}{\d t}L(t)+ \frac{\nu_*}{2}\|D\partial_t \uu\|_{\L2}^2
+\frac{1}{2}\| \Delta \partial_t \phi\|_{\L2}^2  \leq CK(t)L(t)+ C \| \uu\|_{H^2(\Omega)}^2,
\end{align}
where
\begin{align*}
&L(t)=\frac12 \int_{\Omega} \rho(\phi) |\partial_t\uu(t)|^2 \, \d x+\frac12\|\nabla \partial_t \phi(t)\|_{\L2}^2,\\
&K(t)=1+\| F''(\phi)\|_{\L2}^2  \ln \big( C\| F''(\phi)\|_{\L2} \big).
\end{align*}
Recalling estimates \eqref{NSAC4} and \eqref{str-3}, we have
$$
\int_{t}^{t+1} L(\tau) + \| \uu(\tau)\|_{H^2(\Omega)}^2 \, \d \tau \leq C, \quad \forall \, t \geq 0,
$$
where $C>0$ is independent of $t$.
As a consequence, there exists $\zeta \in (0,1)$ ($\zeta$ can be chosen arbitrary small but positive) such that
\begin{align}
L(\zeta)\leq C(\zeta).\label{Lini}
\end{align}
Notice that, without loss of generality, this value of $\zeta$ can be chosen equal to the one in \eqref{FttLp}.
Then, by exploiting \eqref{EB-sig} and the Jensen inequality (cf. \eqref{jensen}), we obtain
$$
\int_t^{t+1} K(\tau) \, \d \tau \leq C, \quad \forall \, t \geq \zeta,
$$
where $C>0$ depends on $\zeta$, but is independent of $t$. Thus, by using Gronwall's lemma on the time interval $[\zeta,1]$ and the uniform Gronwall lemma for $t\geq 1$, we deduce that
$$
L(t)+ \int_t ^{t+1}  \|D\partial_t \uu\|_{\L2}^2
+\| \Delta \partial_t \phi\|_{\L2}^2 \, \d \tau  \leq C(\zeta),\quad \forall\, t\geq \zeta.
$$
Hence we have
\begin{align*}
& \partial_t\uu\in L^\infty(\zeta, T; \H_\sigma)\cap L^2(\zeta, T; \V_\sigma),\\
& \partial_t \phi\in L^\infty(\zeta, T; H^1(\Omega))\cap L^2(\zeta, T; H^2(\Omega)).
\end{align*}
In light of \eqref{est-uw1p} and \eqref{NSAC3h-D}, we infer that
$$
\uu \in L^\infty(\zeta,T;\mathbf{W}^{1,p}(\Omega)), \quad \forall \, p \in (2,\infty).
$$
An immediate consequence of the above regularity results is that $$\widetilde{\mu}=-\Delta \phi+F'(\phi) \in L^2(\zeta,T;L^\infty(\Omega)).$$  Thanks to \cite[Lemma 7.2]{GGW2018}, we deduce that $F'(\phi)\in L^2(\zeta,T;L^\infty(\Omega))$. This property entails that there exists $\zeta' \in (\zeta,\zeta+1)$ such that
\begin{equation}
\label{F'sigma}
\|F'(\phi(\zeta'))\|_{L^\infty(\Omega)}\leq C(\zeta).
\end{equation}
Note that $\zeta'$ can also be chosen arbitrarily close to $\zeta$.

Now, we rewrite \eqref{NSAC-D} as follows
$$
\partial_t \phi + \uu \cdot \nabla \phi - \Delta \phi +F'(\phi) = U(x,t),
$$
with
$$
U=\theta_0 \phi- \rho'(\phi) \frac{|\uu|^2}{2}+\xi.
$$
Thanks to the above regularity, it easily seen that $U \in L^\infty(0,T;L^\infty(\Omega))$. In particular,
$$\sup_{t\geq \zeta} \| U(t)\|_{L^\infty(\Omega)}\leq C(\zeta).$$
For any $p\geq 2$, we compute
\begin{align*}
\frac{1}{p} \ddt \int_{\Omega} |F'(\phi)|^p \, \d x
&= \int_{\Omega} |F'(\phi)|^{p-2} F'(\phi) F''(\phi) \partial_t \phi \, \d x\\
&= \int_{\Omega} |F'(\phi)|^{p-2} F'(\phi) F''(\phi)
 \left( - \uu\cdot \nabla \phi + \Delta \phi -F'(\phi) + U \right) \, \d x.
\end{align*}
Since
$$
\int_{\Omega}  |F'(\phi)|^{p-2} F'(\phi) F''(\phi) \uu \cdot \nabla \phi \, \d x=
\int_{\Omega} \uu \cdot \nabla \left( \frac{1}{p} |F'(\phi)|^p \right) \, \d x=0,
$$
we deduce that
\begin{align*}
& \frac{1}{p} \ddt \int_{\Omega} |F'(\phi)|^p \, \d x
+ \int_{\Omega} \Big( (p-1) |F'(\phi)|^{p-2} F''(\phi)^2 +
|F'(\phi)|^{p-1} F'(\phi) F'''(\phi) \Big) |\nabla \phi|^2 \, \d x\\
&\quad +\int_{\Omega} |F'(\phi)|^{p} F''(\phi) \, \d x
= \int_{\Omega} |F'(\phi)|^{p-2} F'(\phi) F''(\phi)
 U \, \d x.
\end{align*}
We notice that the second term on the left-hand side of the above inequality is non-negative.
Next, we observe that
$$
F''(s)\leq \theta \mathrm{e}^{\frac{2}{\theta} |F'(s)|}, \quad \forall \, s \in (-1,1).
$$
Owing to the above inequality, and using the fact that $s \leq \mathrm{e}^s$ for $s\geq 0$, we deduce that
\begin{align*}
 \ln \left( |F'(s)|^{p-1} F''(s) \right) \leq \ln (\theta)+ \left(1+\frac{2}{\theta} \right) (p-1) |F'(s)|, \quad \forall \, s \in (-1,1).
\end{align*}
Thus, we get
\begin{equation}
|F'(s)|^{p-1} F''(s) \ln \left( |F'(s)|^{p-1} F''(s) \right) \leq C_1 p |F'(s)|^p F''(s) + C_2, \quad \forall \, s \in (-1,1),
\end{equation}
for some $C_1,C_2>0$ independent of $p$. Recalling the inequality
\begin{equation*}
xy \leq \varepsilon x \ln x + {e}^{\frac{y}{\varepsilon}},\quad \forall \, x>0,\ y>0,\   \varepsilon \in (0,1),
\end{equation*}
and taking $\varepsilon = \frac{1}{2C_1 p}$, we arrive at
\begin{align*}
\frac{1}{p} \ddt  \int_{\Omega} |F'(\phi)|^p \, \d x
+ \frac12 \int_{\Omega} |F'(\phi)|^{p} F''(\phi) \, \d x
\leq  \frac{C_2 |\Omega|}{2C_1}+ \int_{\Omega} {e}^{2C_1 p |U| }\, \d x.
\end{align*}
Since $U$ is globally bounded, we obtain
$$
\frac{C_2 |\Omega|}{2C_1}+ \int_{\Omega} {e}^{2C_1 p |U| }\, \d x
\leq \frac{C_2 |\Omega|}{2C_1} + |\Omega| {e}^{2C_3 p} \leq C_4
{e}^{C_5 p},
$$
for some $C_4,C_5>0$ independent of $p$ and $t$.
Observing that $ F''(s)\geq \theta$ for all $s\in (-1,1)$, we rewrite the above differential inequality for $p\geq 2$ as follows
\begin{equation}
\ddt  \int_{\Omega} |F'(\phi)|^p \, \d x
+\theta \int_{\Omega} |F'(\phi)|^{p} \, \d x
\leq C_4   p  {e}^{C_5 p}.\nonumber
\end{equation}
By applying Gronwall's lemma on the time interval $[\zeta', \infty)$, we infer that
\begin{equation}
\| F'(\phi(t))\|_{L^p(\Omega)}^p \leq \| F'(\phi (\zeta'))\|_{L^p(\Omega)}^p e^{-\theta (t-\zeta')} + \frac{C_4 p {e}^{C_5 p}}{\theta}, \quad \forall \, t \geq \zeta'.
\end{equation}
We recall the elementary inequality for $q\in[0,1]$
$$
(x+y)^q\leq x^q+y^q, \quad \forall \,  x>0,\ y>0.
$$
Choosing $q=\frac{1}{p}$, with $p\geq 2$, we find
\begin{equation}
\label{di-sp}
\| F'(\phi(t))\|_{L^p(\Omega)} \leq \| F'(\phi(\zeta'))\|_{L^p(\Omega)} {e}^{-\frac{\theta (t-\zeta')}{p}} +\left( \frac{C_4 p}{\theta}\right)^{\frac{1}{p}} {e}^{C_5}, \quad \forall \, t \geq \zeta'.\nonumber
\end{equation}
Recalling  \eqref{F'sigma} and taking the limit as $p\rightarrow +\infty$, we then deduce that
\begin{equation}
\label{sp1}
\| F'(\phi(t))\|_{L^\infty(\Omega)} \leq \| F'(\phi (\zeta'))\|_{L^\infty(\Omega)} + {e}^{C_5}, \quad \forall \, t \geq \zeta'. \nonumber
\end{equation}
As a result, there exists $\delta=\delta(\zeta)>0$ such that
$$
-1+\delta \leq \phi(x,t) \leq 1-\delta, \quad \forall \, x \in \overline{\Omega},\ t \geq \zeta'.
$$
The proof of Theorem \ref{Proreg-D} is complete.
\hfill $\square$

\section{Mass-conserving Euler-Allen-Cahn System in Two Dimensions}
\label{EAC-sec}
\setcounter{equation}{0}

In this section, we study the dynamics of ideal two-phase flows in a bounded domain $\Omega \subset \mathbb{R}^2$ with smooth boundary, which is described by the following mass-conserving Euler-Allen-Cahn system:
\begin{equation}
 \label{EAC}
\begin{cases}
\partial_t \uu + \uu \cdot \nabla \uu + \nabla P= - \div(\nabla \phi \otimes \nabla \phi),\\
\div \uu=0,\\
\partial_t \phi +\uu\cdot \nabla \phi + \mu= \overline{\mu}, \\
\mu= -\Delta \phi + \Psi' (\phi),
\end{cases}
\quad \text{ in } \Omega \times (0,T).
\end{equation}
The above system corresponds to the inviscid NSAC system \eqref{NSAC-D} (i.e., $\nu\equiv 0$) with matched densities (i.e., $\rho \equiv 1$). The coupled system \eqref{EAC} is subject to the following boundary conditions
\begin{equation}  \label{boundaryE}
\uu\cdot \n =0,\quad  \partial_{\n} \phi =0 \quad \text{ on } \partial\Omega
\times (0,T),
\end{equation}
and initial conditions
\begin{equation}
\label{ICE}
\uu(\cdot, 0)= \uu_0, \quad \phi(\cdot, 0)=\phi_0 \quad \text{ in } \Omega.
\end{equation}

The main result of this section is as follows:

\begin{theorem}
\label{Th-EAC}
Let $\Omega$ be a bounded domain in $\mathbb{R}^2$ with smooth boundary $\partial\Omega$.
\begin{itemize}
\item[(1)] Assume that $\uu_0 \in \H_\sigma\cap \mathbf{H}^1(\Omega)$, $\phi_0\in H^2(\Omega)$ such that
$F'(\phi_0)\in \L2$, $\| \phi_0\|_{L^\infty(\Omega)}\leq 1$,
$|\overline{\phi}_0|<1$ and $\partial_\n \phi_0=0$ almost everywhere on $\partial \Omega$. Then, there exists a global solution $(\uu,\phi)$ that satisfies the problem \eqref{EAC}-\eqref{ICE} in the sense of distribution on $\Omega \times (0,\infty)$, and for all $T>0$,
\begin{align*}
&\uu \in L^\infty(0,T;\H_\sigma \cap \mathbf{H}^1(\Omega)), \\
& \phi \in L^\infty(0,T; H^2(\Omega))\cap L^2(0,T; W^{2,p}(\Omega)),\\
& \partial_t \phi  \in L^\infty(0,T;L^2(\Omega)) \cap L^2(0,T;H^1(\Omega)),\\
&\phi \in L^\infty(\Omega\times (0,T)) : |\phi(x,t)|<1 \ \ \text{a.e. in} \  \Omega\times(0,T),
\end{align*}
for any $p\in (2,\infty)$. Moreover, $\partial_\n \phi=0$ almost everywhere on $\partial\Omega\times(0,\infty)$, and $\uu|_{t=0}=\uu_0$, $\phi|_{t=0}=\phi_0$ in $\Omega$.

\medskip
\item[(2)] Assume that $
\uu_0 \in \H_\sigma\cap \mathbf{W}^{1,p}(\Omega)$, $p \in (2,\infty)$, $\phi_0\in H^2(\Omega)
$ such that $F'(\phi_0)\in \L2$,
$F''(\phi_0) \in L^1(\Omega)$, $\| \phi_0\|_{L^\infty(\Omega)}\leq 1$,  $ |\overline{\phi}_0|<1$, $\partial_\n \phi_0=0$ almost everywhere on $\partial \Omega$, and in addition $\nabla \mu_0= \nabla ( -\Delta \phi_0+F'(\phi_0)) \in \mathbf{L}^2(\Omega)$. Then, there exists a global solution $(\uu,\phi)$ that satisfies the problem \eqref{EAC}-\eqref{ICE} almost everywhere in $\Omega \times (0,\infty)$, and for all $T>0$,
\begin{align*}
&\uu \in L^\infty(0,T;\H_\sigma \cap \mathbf{W}^{1,p}(\Omega)), \\
&\phi \in L^\infty(0,T;  W^{2,p}(\Omega)),\\
&\partial_t \phi  \in L^\infty(0,T;H^1(\Omega)) \cap L^2(0,T;H^2(\Omega)),\\
&\phi \in L^\infty(\Omega\times (0,T)) : |\phi(x,t)|<1 \ \ \text{a.e. in} \  \Omega\times(0,T).
\end{align*}
for any $p\in (2,\infty)$. Moreover, $\partial_\n \phi=0$ almost everywhere on $\partial\Omega\times(0,\infty)$, and $\uu|_{t=0}=\uu_0$, $\phi|_{t=0}=\phi_0$ in $\Omega$. In addition, for any $\zeta>0$, there exists $\delta=\delta(\zeta)>0$ such that
$$
-1+\delta \leq \phi(x,t) \leq 1-\delta, \quad \forall \, x \in \overline{\Omega}, \ t \geq \zeta.
$$
\end{itemize}
\end{theorem}
\begin{proof} To prove Theorem \ref{Th-EAC}, we derive formal \textit{a priori} estimates leading to the required estimates of solutions. Then the existence results can be proved by a suitable approximation scheme together with the fixed point argument and then passing to the limit, which is standard owing to uniform estimates obtained in the first step. Hence, below we only focus on the \textit{a priori} estimates and omit further details.
\smallskip

\textbf{Case 1.}
Let us first consider initial datum $(\uu_0,\phi_0)$ such that
$$
\uu_0 \in \H_\sigma\cap \mathbf{H}^1(\Omega), \quad \phi_0\in H^2(\Omega), \quad \partial_\n \phi_0=0\ \ \text{a.e. on}\ \partial\Omega,
$$
with
$$
\| \phi_0\|_{L^\infty(\Omega)}\leq 1, \quad  |\overline{\phi}_0|<1 \quad \text{and} \quad F'(\phi_0)\in L^2(\Omega).
$$

\textbf{Lower-order estimate.} As in the previous section, we have the conservation of mass
$$
\overline{\phi}(t)= \overline{\phi}_0, \quad \forall \, t \geq 0.
$$
By the same argument for \eqref{BEL-D}, we can deduce the energy equality
\begin{equation}
\label{EE2}
\ddt E(\uu, \phi) + \|\partial_t \phi + \uu \cdot \nabla \phi\|_{L^2(\Omega)}^2=0.
\end{equation}
Integrating the above relation on $[0,t]$, we find
$$
 E(\uu(t),\phi(t))+  \int_0^t \| \partial_t \phi + \uu \cdot \nabla \phi\|_{\L2}^2 \, \d \tau = E(\uu_0,\phi_0), \quad \forall \, t \geq 0.
$$
This implies that for all $T>0$,
\begin{equation}
\label{E1}
\uu \in L^\infty(0,T; \H_\sigma), \quad \phi\in L^\infty(0,T;H^1(\Omega)), \quad
 \partial_t \phi + \uu \cdot \nabla \phi \in L^2(0,T;L^2(\Omega)),
\end{equation}
where the last property also implies $\mu-\overline{\mu}\in L^2(0,T;L^2(\Omega))$. In addition, it follows from the estimates \eqref{H2-D} and \eqref{mubar} that for all $T>0$
\begin{align*}
\phi \in L^2(0,T;H^2(\Omega)),\quad \mu\in L^2(0,T; L^2(\Omega)), \quad F'(\phi)\in L^2(0,T;L^2(\Omega)).
\end{align*}
Like before, the singularity of $F'$ entails that $\phi \in L^\infty(\Omega \times (0,T))$ with
$|\phi(x,t)|<1$ almost everywhere in $\Omega\times(0,T)$.
  This fact combined with $\phi \in L^2(0,T;H^2(\Omega))$ implies that $\phi \in L^\infty(0,T;L^\infty(\Omega))$ and $$\|\phi(t)\|_{L^\infty(\Omega)}\leq 1,\quad \text{for a.a.}\ t\in [0,T].$$
In comparison with the viscous case, at this stage it is not possible to prove that $\partial_t \phi\in L^2(0,T;L^2(\Omega))$.
\medskip

\textbf{Higher-order estimates.}
In the two dimensional case, it is convenient to consider the equation for the vorticity $\omega= \partial_{x_1} u_2-\partial_{x_2} u_1$ that reads as follows
\begin{equation}
\label{vort-eq}
\partial_t \omega+ \uu \cdot \nabla \omega = \nabla \mu \cdot (\nabla \phi)^\perp,
\end{equation}
where $\vv^\perp= (v_2,-v_1)$ for any two dimensional vector $\vv=(v_1,v_2)$. Multiplying \eqref{vort-eq} by $\omega$ and integrating over $\Omega$, we obtain
\begin{equation}
\label{Vor1}
\frac12 \ddt \| \omega\|_{\L2}^2= \int_{\Omega} \nabla \mu \cdot (\nabla \phi)^\perp \, \omega \, \d x.
\end{equation}
On the other hand, differentiating \eqref{EAC}$_3$ with respect to time, multiplying the resultant by $\partial_t \phi$ and integrating over $\Omega$, we find
\begin{align}
\frac12 \ddt \| \partial_t \phi\|_{\L2}^2+ \int_{\Omega} \partial_t \uu \cdot \nabla \phi \, \partial_t \phi \, \d x  + \| \nabla \partial_t \phi\|_{\L2}^2
+\int_{\Omega} F''(\phi) |\partial_t \phi|^2 \, \d x= \theta_0 \| \partial_t \phi\|_{\L2}^2.
\label{TestAC2}
\end{align}
Here we have used the following equalities
$$
\int_{\Omega} \uu\cdot \nabla \partial_t \phi \, \partial_t \phi \, \d x=
\int_{\Omega} \uu\cdot \nabla \left( \frac12 |\partial_t \phi|^2 \right) \, \d x=0 \quad
\text{and}\quad
\int_{\Omega} \partial_t \phi \, \d x=0.
$$
Define
$$
H(t) = \frac12  \| \omega\|_{\L2}^2+ \frac12 \| \partial_t \phi\|_{\L2}^2.
$$
Then adding together \eqref{Vor1} and \eqref{TestAC2}, we infer from the convexity of $F$ (i.e., $F''>0$) that
\begin{equation}
\label{In-EAC}
\ddt H(t) +  \| \nabla \partial_t \phi\|_{\L2}^2 \leq  \int_{\Omega} \nabla \mu \cdot (\nabla \phi)^\perp \, \omega \, \d x - \int_{\Omega} \partial_t \uu \cdot \nabla \phi \, \partial_t \phi \, \d x +  \theta_0 \| \partial_t \phi\|_{\L2}^2.
\end{equation}

Before proceeding to control the terms on the right-hand side of \eqref{In-EAC}, we rewrite the second one using the Euler equation.
We first observe that
$$
\partial_t \uu = \P \big( \mu \nabla \phi - \uu \cdot \nabla \uu \big),
$$
where $\mathbb{P}$ is the Leray projection operator.
Thus, we have
\begin{align*}
\int_{\Omega} \partial_t \uu \cdot \nabla \phi \, \partial_t \phi \, \d x
&= \int_{\Omega}  \P \big( \mu  \nabla \phi - \uu \cdot \nabla \uu \big) \cdot \nabla \phi \, \partial_t \phi \, \d x \\
&= \int_{\Omega}  \mu  \nabla \phi \cdot \P \big( \nabla \phi \, \partial_t \phi\big) \, \d x - \int_{\Omega} (\uu \cdot \nabla \uu)  \cdot \P \big( \nabla \phi \, \partial_t \phi\big) \, \d x \\
&= -\int_{\Omega}  \mu  \nabla \phi \cdot \P \big( \phi \,\nabla \partial_t \phi\big) \, \d x - \int_{\Omega} \div ( \uu \otimes \uu ) \cdot \P \big( \nabla \phi \, \partial_t \phi\big) \, \d x \\
&= -\int_{\Omega}  \mu  \nabla \phi \cdot \P \big( \phi \,\nabla \partial_t \phi\big) \, \d x + \int_{\Omega}  (\uu \otimes \uu)  : \nabla \P \big( \nabla \phi \, \partial_t \phi\big) \, \d x \\
&\quad - \int_{\partial \Omega} (\uu\otimes \uu) \P\big( \nabla \phi \, \partial_t \phi\big) \cdot \n \, \d S \\
&= -\int_{\Omega}  \mu  \nabla \phi \cdot \P \big( \phi \,\nabla \partial_t \phi\big) \, \d x + \int_{\Omega}  (\uu \otimes \uu)  : \nabla \P \big( \nabla \phi \, \partial_t \phi\big) \, \d x \\
&\quad - \int_{\partial \Omega} (\uu \cdot \n) \big(\uu \cdot \P\big( \nabla \phi \, \partial_t \phi\big) \big) \, \d S \\
&= -\int_{\Omega}  \mu  \nabla \phi \cdot \P \big( \phi \,\nabla \partial_t \phi\big) \, \d x + \int_{\Omega}  (\uu \otimes \uu)  : \nabla \P \big( \nabla \phi \, \partial_t \phi\big) \, \d x.
\end{align*}
Here we have used that $\P( \nabla v)=0$ for any $v \in H^1(\Omega)$, the relation $\div (S^t \vv)= S^t : \nabla \vv+ \div S \cdot \vv$ for any $d \times d$ tensor $S$ and vector $\vv$, and the no-normal flow condition $\uu \cdot \n =0$ at the boundary $\partial\Omega$.
As a consequence, we rewrite \eqref{In-EAC} as follows
\begin{equation}
\label{In-EAC2}
\begin{split}
\ddt H(t) +  \| \nabla \partial_t \phi\|_{\L2}^2
 & \leq  \int_{\Omega} \nabla \mu \cdot (\nabla \phi)^\perp \, \omega \, \d x + \int_{\Omega} \mu \nabla \phi \cdot \P \big(  \phi \, \nabla \partial_t \phi\big) \, \d x  \\
&\quad -  \int_{\Omega}  (\uu \otimes \uu)  : \nabla \P \big( \nabla \phi \, \partial_t \phi\big) \, \d x +  \theta_0 \| \partial_t \phi\|_{\L2}^2.
\end{split}
\end{equation}

We now turn to estimate the right-hand side of \eqref{In-EAC2}.
By H\"{o}lder's inequality, we have
\begin{equation}
\label{I1}
\int_{\Omega} \nabla \mu \cdot (\nabla \phi)^\perp \, \omega \, \d x
\leq \| \nabla \mu\|_{\L2} \| \nabla \phi\|_{L^\infty(\Omega)} \| \omega\|_{\L2}.
\end{equation}
By taking the gradient of \eqref{EAC}$_3$, we observe that
$$
\| \nabla \mu\|_{\L2}
\leq \| \nabla \partial_t \phi\|_{\L2}+ \|\nabla^2 \phi\,  \uu \|_{\L2} + \| \nabla \uu \, \nabla \phi\|_{\L2}.
$$
Recalling the elementary inequality
$$
\| \vv\|_{H^1(\Omega)}\leq C \left(\| \vv\|_{\L2} +
\| \div \vv\|_{\L2} + \| \curl \vv\|_{\L2}+ \| \vv\cdot \n\|_{H^\frac12(\partial \Omega)}\right), \quad \forall \, \vv \in \mathbf{H}^1(\Omega),
$$
and exploiting Lemma \ref{result1} as well as \eqref{BGW}, we find that
\begin{align*}
\| \nabla \mu\|_{\L2}
&\leq \| \nabla \partial_t \phi\|_{\L2}
+ C \| \uu\|_{H^1(\Omega)} \| \nabla^2 \phi\|_{\L2} \ln^\frac12
\left( C \frac{\| \nabla^2 \phi\|_{L^{p}(\Omega)}}{\| \nabla^2 \phi\|_{\L2}} \right) \\
&\quad
+ \| \nabla \uu \|_{\L2} \|\nabla \phi\|_{L^\infty(\Omega)}\\
&\leq \| \nabla \partial_t \phi\|_{\L2}
+C (1+ \| \omega\|_{\L2})  \| \nabla^2 \phi\|_{\L2} \ln^\frac12
\left( C \frac{\| \nabla^2 \phi\|_{L^{p}(\Omega)}}{\| \nabla^2 \phi\|_{\L2}} \right)\\
&\quad + C (1+ \| \omega\|_{\L2})
\|\nabla \phi\|_{H^1(\Omega)}\ln^\frac12
\left( C \frac{\| \nabla \phi\|_{W^{1,p}(\Omega)}}{\| \nabla \phi\|_{H^1(\Omega)}} \right),
\end{align*}
for some $p>2$. Using \eqref{ineq0}, we rewrite the above estimate as follows
\begin{align*}
\| \nabla \mu\|_{\L2}
&\leq \| \nabla \partial_t \phi\|_{\L2}
+C \left(1+ \| \omega\|_{\L2}\right)  \left( \| \nabla \phi\|_{H^1(\Omega)}
\ln^\frac12 \left( C \|\nabla \phi\|_{W^{1,p}(\Omega)} \right) + 1 \right).
\end{align*}
Then, using again the inequality \eqref{BGW}, \eqref{I1} can be controlled as follows
\begin{align*}
&\int_{\Omega} \nabla \mu \cdot (\nabla \phi)^\perp \, \omega \, \d x\\
&\quad \leq \| \nabla \partial_t \phi\|_{\L2} \| \omega\|_{\L2} \|\nabla \phi\|_{H^1(\Omega)}\ln^\frac12
\left( C \frac{\| \nabla \phi\|_{W^{1,p}(\Omega)}}{\| \nabla \phi\|_{H^1(\Omega)}} \right) \\
&\qquad + C \| \omega\|_{\L2} \left( 1+ \| \omega\|_{\L2} \right)  \Big( \|\nabla  \phi\|_{H^1(\Omega)}
\ln^\frac12 \big( C \| \nabla \phi\|_{W^{1,p}(\Omega)} \big) + 1 \Big)\\
&\qquad\quad  \times
\|\nabla \phi\|_{H^1(\Omega)}\ln^\frac12
\left( C \frac{\| \nabla \phi\|_{W^{1,p}(\Omega)}}{\| \nabla \phi\|_{H^1(\Omega)}} \right)\\
&\quad \leq \| \nabla \partial_t \phi\|_{\L2} \| \omega\|_{\L2} \Big( \|\nabla  \phi\|_{H^1(\Omega)}
\ln^\frac12 \big( C \| \nabla \phi\|_{W^{1,p}(\Omega)}\big) +1\Big)\\
&\qquad + C \left( 1+\| \omega\|_{\L2}^2 \right) \Big( \| \nabla \phi\|_{H^1(\Omega)}^2
\ln  \big( C \| \nabla \phi\|_{W^{1,p}(\Omega)} \big) + 1 \Big)\\
&\quad \leq \frac16  \| \nabla \partial_t \phi\|_{\L2}^2+
C \left( 1+\| \omega\|_{\L2}^2 \right) \Big( \| \phi\|_{H^2(\Omega)}^2
\ln  \big( C \| \phi\|_{W^{2,p}(\Omega)} \big) + 1 \Big),
\end{align*}
for some $p>2$.
Next, since $\phi$ is globally bounded, we have
\begin{align*}
& \int_{\Omega} \mu \nabla \phi \cdot \P \big(  \phi \, \nabla \partial_t \phi\big) \, \d x\\
&\quad \leq C \| \mu\|_{\L2} \| \nabla \phi\|_{L^\infty(\Omega)} \| \phi \nabla \partial_t \phi \|_{\L2}\\
&\quad \leq C  \| \mu\|_{\L2} \|\nabla \phi\|_{H^1(\Omega)}\ln^\frac12
\left( C \frac{\| \nabla \phi\|_{W^{1,p}(\Omega)}}{\| \nabla \phi\|_{H^1(\Omega)}} \right) \| \phi\|_{L^\infty(\Omega)} \| \nabla \partial_t \phi\|_{\L2}\\
&\quad \leq C \| \mu\|_{\L2} \Big( \| \phi\|_{H^2(\Omega)}
\ln^\frac12 \big( C \| \phi\|_{W^{2,p}(\Omega)} \big) + 1 \Big) \| \nabla \partial_t \phi\|_{\L2},
\end{align*}
for some $p>2$. In order to estimate the $L^2$-norm of $\mu$, we notice that
\begin{align*}
\| \mu-\overline{\mu}\|_{L^2(\Omega)}
&\leq \| \partial_t \phi\|_{\L2}+ \| \uu\cdot \nabla \phi\|_{\L2}\\
&\leq  \| \partial_t \phi\|_{\L2}+ \|\uu \|_{L^4(\Omega)} \| \nabla \phi\|_{L^4(\Omega)} \\
&\leq  \| \partial_t \phi\|_{\L2}+ C \|\uu \|_{\L2}^\frac12 \| \u\|_{H^1(\Omega)}^\frac12 \| \nabla \phi\|_{\L2}^\frac12 \| \phi\|_{H^2(\Omega)}^\frac12 \\
&\leq \| \partial_t \phi\|_{\L2}+ C \left( 1+\| \omega\|_{\L2} \right)^\frac12 \left(1+\| \mu-\overline{\mu}\|_{\L2} \right)^\frac12 \nonumber\\
&\leq \| \partial_t \phi\|_{\L2}+ C \left( 1+\| \omega\|_{\L2}\right) +\frac12 \| \mu-\overline{\mu}\|_{L^2(\Omega)}.
\end{align*}
Here we have used the equation \eqref{EAC}$_3$, the Ladyzhenskaya inequality, and the estimates \eqref{H2-D}, \eqref{E1}. Since $\|\mu\|_{\L2}\leq C(1+\| \mu-\overline{\mu}\|_{\L2})$ (recalling \eqref{mubar}), we then infer that
$$
\| \mu\|_{\L2}\leq C \left( 1+ \| \partial_t \phi\|_{\L2}+ \| \omega\|_{\L2} \right).
$$
Thus, we can deduce that
\begin{align*}
\int_{\Omega} &\mu \nabla \phi \cdot \P \big(  \phi \, \nabla \partial_t \phi\big) \, \d x\\
&\leq \frac16 \| \nabla \partial_t \phi\|_{\L2}^2 + C \left(1+ \| \partial_t \phi\|_{\L2}^2 + \| \omega\|_{\L2}^2  \right) \left(  \| \phi\|_{H^2(\Omega)}^2
\ln  \left( C \| \phi\|_{W^{2,p}(\Omega)} \right) + 1 \right).
\end{align*}
Recalling that $\P$ is a bounded operator from $\mathbf{H}^1(\Omega)$ to $\H_\sigma \cap \mathbf{H}^1(\Omega)$, and using the inequalities \eqref{LADY} and \eqref{BGW}, the Poincar\'{e} inequality, and Lemma \ref{result1}, we have
\begin{align*}
& -\int_{\Omega}  (\uu \otimes \uu)  : \nabla \P \big( \nabla \phi \, \partial_t \phi\big) \, \d x\\
&\quad  \leq \| \uu\|_{L^4(\Omega)}^2 \|\P (\nabla \phi \, \partial_t \phi) \|_{H^1(\Omega)}\\
&\quad \leq C \| \uu\|_{\L2} \| \uu\|_{H^1(\Omega)} \| \nabla \phi \, \partial_t \phi\|_{H^1(\Omega)}\\
&\quad \leq C \left(1+\| \omega\|_{\L2}\right) \Big( \| \nabla \phi \, \partial_t \phi\|_{\L2} +
\| \nabla^2 \phi  \,\partial_t \phi \|_{\L2}+ \| \nabla \phi \, \nabla \partial_t \phi\|_{\L2} \Big) \\
&\quad \leq C \left(1+\| \omega\|_{\L2}\right) \Bigg[ \| \nabla \phi\|_{L^\infty(\Omega)} \| \nabla \partial_t \phi\|_{\L2}\\
&\qquad \quad + \| \nabla \partial_t \phi\|_{\L2} \| \nabla^2 \phi\|_{\L2} \ln^\frac12
\left( C \frac{\| \nabla^2 \phi\|_{L^{p}(\Omega)}}{\| \nabla^2 \phi\|_{\L2}} \right) \Bigg] \\
&\quad \leq C \left(1+\| \omega\|_{\L2}\right)  \| \nabla \partial_t \phi\|_{\L2}
\Big( \|\nabla \phi\|_{H^1(\Omega)}\ln^\frac12
\left( C \frac{\| \nabla \phi\|_{W^{1,p}(\Omega)}}{\| \nabla \phi\|_{H^1(\Omega)}} \right)\\
&\qquad +  \| \nabla^2 \phi\|_{\L2} \ln^\frac12
\left( C \frac{\| \nabla^2 \phi\|_{L^{p}(\Omega)}}{\| \nabla^2 \phi\|_{\L2}} \Big) \right)\\
&\quad \leq C \left(1+\| \omega\|_{\L2}\right)  \| \nabla \partial_t \phi\|_{\L2}
\Big(  \| \phi\|_{H^2(\Omega)} \ln^\frac12\big( C\| \phi\|_{W^{2,p}(\Omega)}\big)+1 \Big)\\
&\quad \leq \frac16 \| \nabla \partial_t \phi\|_{\L2}^2+
C\left(1+\|\omega \|_{\L2}^2\right) \Big( \| \phi\|_{H^2(\Omega)}^2 \ln\big( C\| \phi\|_{W^{2,p}(\Omega)}\big)+1 \Big),
\end{align*}
for some $p>2$.

Combining the above estimates together with \eqref{In-EAC2}, we arrive at the differential inequality
\begin{align}
\label{In-EAC3}
\ddt H(t) +  \frac12 \| \nabla \partial_t \phi\|_{\L2}^2
\leq C (1+H(t)) \Big( \| \phi\|_{H^2(\Omega)}^2 \ln\big( C \| \phi\|_{W^{2,p}(\Omega)}\big)+1 \Big).
\end{align}
In order to close the estimate, we are left to absorb the logarithmic term on the right-hand side of the above differential inequality. To this aim, we first multiply $\mu=-\Delta \phi+\Psi'(\phi)$ by $|F'(\phi)|^{p-2}F'(\phi)$, for some $p>2$, and integrate over $\Omega$. After integrating by parts and using the boundary condition for $\phi$, we obtain
$$
\int_{\Omega} (p-1)|F'(\phi)|^{p-2} F''(\phi) |\nabla \phi|^2 \, \d x+
\| F'(\phi)\|_{L^p(\Omega)}^p= \int_{\Omega} (\mu +\theta_0 \phi)|F'(\phi)|^{p-2}F'(\phi) \, \d x.
$$
By Young's inequality and the fact that $F''>0$, we deduce
$$
\| F'(\phi)\|_{L^p(\Omega)} \leq C \left( 1+ \|\mu\|_{L^p(\Omega)} \right).
$$
Using the elliptic regularity, together with the above inequality and \eqref{E1}, we obtain that (cf. \eqref{pw2p})
$$
\| \phi\|_{W^{2,p}(\Omega)} \leq C \left(1+\| \mu\|_{L^p(\Omega)} \right).
$$
On the other hand, we infer from equation \eqref{EAC}$_3$ that
\begin{align*}
 \|\mu-\overline{\mu} \|_{L^p(\Omega)}
 \leq \| \partial_t \phi\|_{L^p(\Omega)} + \| \uu \cdot \nabla \phi\|_{L^p(\Omega)}.
\end{align*}
Then by the Poincar\'{e} inequality and the Sobolev embedding theorem, we find
\begin{align*}
\| \mu\|_{L^p(\Omega)}
&\leq C \|\mu-\overline{\mu} \|_{L^p(\Omega)} +C |\overline{\mu}|\\
&\leq C \| \nabla \partial_t \phi\|_{\L2}
 +C  \| \uu\|_{H^1(\Omega)} \| \phi\|_{H^2(\Omega)}
 + C \left( 1+\| \mu-\overline{\mu}\|_{\L2} \right)\\
 &\leq C \| \nabla \partial_t \phi\|_{\L2}
 +C \left( 1+\| \omega\|_{\L2} \right) \left(1+ \| \mu-\overline{\mu}\|_{\L2} \right) \\
 &\leq C \| \nabla \partial_t \phi\|_{\L2}
 +C \left(1+\| \omega\|_{\L2}\right) \left(1+ \| \partial_t \phi\|_{\L2}+ \|\omega\|_{\L2}\right).
\end{align*}
Thus, for $p>2$, we reach
$$
\| \phi\|_{W^{2,p}(\Omega)} \leq C \left( 1+ \| \nabla \partial_t \phi\|_{\L2} +
H(t) \right),
$$
which, in turn, allows us to rewrite \eqref{In-EAC3} as
\begin{align}
& \ddt H(t) +  \frac12 \| \nabla \partial_t \phi\|_{\L2}^2\notag\\
&\quad \leq C (1+H(t)) \Big( \| \phi\|_{H^2(\Omega)}^2 \ln\Big( C\big( 1+ \| \nabla \partial_t \phi\|_{\L2} +
H(t)\big)\Big)+1 \Big).
\label{In-EAC4}
\end{align}
We now observe that, for any $\varepsilon>0$, the following inequality holds
$$
x\ln(C y)\leq \varepsilon y+ x\ln\Big(\frac{ C x}{\varepsilon}\Big) \quad \forall \, x, y >0.
$$
By using the above inequality with $x=1+H$, $y= 1+ \| \nabla \partial_t \phi\|_{\L2} +H $ and $\varepsilon=1$, we deduce that
\begin{align*}
 &\ddt H(t) +  \frac12 \| \nabla \partial_t \phi\|_{\L2}^2  \\
  &\quad \leq  \| \nabla \partial_t \phi\|_{\L2} \| \phi\|_{H^2(\Omega)}^2
  + C ( 1+\| \phi\|_{H^2(\Omega)}^2 ) (1+H(t)) \ln \big( C (1+H(t))\big).
\end{align*}
By Young's inequality, we obtain
\begin{align*}
\ddt H(t) +  \frac14 \| \nabla \partial_t \phi\|_{\L2}^2
\leq   \| \phi\|_{H^2(\Omega)}^4
  + C ( 1+\| \phi\|_{H^2(\Omega)}^2 ) (1+H(t)) \ln \big( C (1+H(t))\big).
\end{align*}
Recalling that $\| \phi\|_{H^2(\Omega)}^2 \leq C(1+H(t))$, we are finally led to the differential inequality
\begin{equation}
\label{In-EAC5}
\ddt H(t) +  \frac14 \| \nabla \partial_t \phi\|_{\L2}^2
\leq   C ( 1+\| \phi\|_{H^2(\Omega)}^2 ) (1+H(t)) \ln \big( C (1+H(t))\big).
\end{equation}
Since $\phi \in L^2(0,T;H^2(\Omega))$, then applying the generalized Gronwall lemma \ref{GL2}, we find the double exponential bound
\begin{align*}
\sup_{t \in [0,T]} &\Big( \|\partial_t \phi (t)\|_{\L2}^2+ \|\omega(t) \|_{\L2}^2 \Big)\\
&\leq C \left(1+  \| \uu_0\|_{H^1(\Omega)}^2 \| \phi_0\|_{H^2(\Omega)}^2
+\| \phi_0\|_{H^2(\Omega)}^2+ \| \Psi'(\phi_0)\|_{\L2}^2+ \| \uu_0\|_{H^1(\Omega)}^2 \right)^{{e}^{\int_0^T 1+\| \phi(s)\|_{H^2(\Omega)}^2 \, \d s}},
\end{align*}
for some constant $C>0$.
Here we have used that
$$
\| \partial_t \phi (0)\|_{\L2} \leq C \| \uu_0\|_{H^1(\Omega)} \| \phi_0\|_{H^2(\Omega)}
+C \| \phi_0\|_{H^2(\Omega)}+C \| \Psi'(\phi_0)\|_{\L2}.
$$
Hence, we get
\begin{equation}
\label{Reg-E1}
\partial_t \phi \in L^\infty(0,T;L^2(\Omega)) \cap L^2(0,T;H^1(\Omega)), \quad
\omega \in L^\infty(0,T;L^2(\Omega)), \quad \forall \, T>0,
\end{equation}
which, in turn, entail that
\begin{equation}
\label{Reg-E2}
\uu \in L^\infty(0,T;\mathbf{H}^1(\Omega)), \quad \phi \in L^\infty(0,T; H^2(\Omega))\cap L^2(0,T; W^{2,p}(\Omega)),  \quad \forall \, T>0,
\end{equation}
for any $p \in [2,\infty)$.
\medskip

\textbf{Case 2.}
We now consider an initial condition $(\uu_0,\phi_0)$ such that
$$
\uu_0 \in \H_\sigma\cap \mathbf{W}^{1,p}(\Omega) , \quad \phi_0\in H^2(\Omega),\quad\partial_\n \phi_0=0\ \ \text{a.e on}\ \partial\Omega,
$$
for $p \in (2,\infty)$, with $\| \phi_0\|_{L^\infty(\Omega)}\leq 1$, $|\overline{\phi}_0|<1$ and
$$
F'(\phi_0)\in \L2, \quad  F''(\phi_0)\in L^1(\Omega), \quad \nabla \mu_0= \nabla ( -\Delta \phi_0+F'(\phi_0)) \in L^2(\Omega).
$$
Thanks to the first part of Theorem \ref{Th-EAC}, we have a solution $(\uu,\phi)$ satisfying \eqref{Reg-E1} and \eqref{Reg-E2}. Moreover, repeating the same argument performed in Section \ref{S-STRONG}, we have (cf. \eqref{EntE4})
$$
\ddt \int_{\Omega} F''(\phi) \, \d x + \frac14 \int_{\Omega} F'''(\phi) F'(\phi) \, \d x
\leq C,
$$
for some positive constant $C$ only depending on $\Omega$ and the parameters of the system. Since $F''(\phi_0)\in L^1(\Omega)$, we learn, in particular, that (cf. \eqref{EB3})
\begin{equation}
\int_t^{t+1}\! \int_{\Omega} |F''(\phi)|^2 \ln ( 1+F''(\phi)) \,\d x \, \d \tau \leq
C, \quad \forall \, t \geq 0.
\end{equation}
Multiplying \eqref{vort-eq} by $|\omega|^{p-2}\omega$ ($p>2$) and integrating over $\Omega$, we obtain
$$
\frac{1}{p} \ddt \|\omega\|_{L^p(\Omega)}^p = \int_{\Omega} \nabla \mu \cdot (\nabla \phi)^\perp |\omega|^{p-2}\omega \, \d x.
$$
By H\"{o}lder's inequality, we easily get
$$
\frac{1}{p} \ddt  \|\omega\|_{L^p(\Omega)}^p \leq \| \nabla \mu \cdot (\nabla \phi)^\perp\|_{L^p(\Omega)} \|\omega\|_{L^p(\Omega)}^{p-1},
$$
which, in turn, implies
$$
\frac12 \ddt  \|\omega\|_{L^p(\Omega)}^2 \leq  \| \nabla \mu \cdot (\nabla \phi)^\perp\|_{L^p(\Omega)} \| \omega\|_{L^p(\Omega)}.
$$
Next, differentiating \eqref{EAC}$_3$ with respect time, then multiplying the resultant by $-\Delta \partial_t \phi$ and integrating over $\Omega$, we obtain
\begin{align*}
& \frac12 \ddt  \| \nabla \partial_t \phi\|_{\L2}^2+ \| \Delta \partial_t \phi\|_{\L2}^2\\
&\quad  = \theta_0 \| \nabla \partial_t \phi\|_{\L2}^2+ \int_{\Omega} F''(\phi) \partial_t \phi \Delta \partial_t \phi \, \d x+\int_{\Omega} (\partial_t \uu \cdot \nabla \phi) \Delta \partial_t \phi\, \d x \\
&\qquad +  \int_{\Omega} (\uu \cdot \nabla \partial_t \phi) \Delta \partial_t \phi\, \d x.
\end{align*}
Here we have used the fact that $\overline{\Delta \partial_t \phi}=0$ since $\partial_\n \partial_t \phi=0$ on $\partial \Omega$. Collecting the above two estimates, we find that
\begin{align*}
&\ddt \Big( \frac12  \|\omega\|_{L^p(\Omega)}^2+\frac12 \| \nabla \partial_t \phi\|_{\L2}^2 \Big)  + \| \Delta \partial_t \phi\|_{\L2}^2 \\
&\quad
\leq  \| \nabla \mu \cdot (\nabla \phi)^\perp\|_{L^p(\Omega)} \| \omega\|_{L^p(\Omega)} + \theta_0 \|\nabla  \partial_t \phi\|_{\L2}^2+ \int_{\Omega} F''(\phi) \partial_t \phi \Delta \partial_t \phi \, \d x\\
&\qquad
+\int_{\Omega} (\partial_t \uu \cdot \nabla \phi) \Delta \partial_t \phi\, \d x +  \int_{\Omega} (\uu \cdot \nabla \partial_t \phi) \Delta \partial_t \phi\, \d x.
\end{align*}
Notice that, by  \eqref{EAC}$_3$, we have the relation $\nabla \mu= \nabla \partial_t \phi + (\nabla \uu)^t \nabla \phi + (\uu\cdot \nabla ) \nabla \phi$. Exploiting this identity, we get
\begin{align*}
&\| \nabla \mu \cdot (\nabla \phi)^\perp\|_{L^p(\Omega)} \| \omega\|_{L^p(\Omega)}\\
&\quad \leq \big( \|\nabla \partial_t \phi\|_{L^p(\Omega)} + \|(\nabla \uu)^t \nabla \phi\|_{L^p(\Omega)} + \| (\uu\cdot \nabla ) \nabla \phi\|_{L^p(\Omega)} \big) \|\nabla \phi\|_{L^\infty(\Omega)} \| \omega\|_{L^p(\Omega)}.
\end{align*}
Using  the Gagliardo-Nirenberg inequality \eqref{GN2} and the following inequality for divergence-free vector fields satisfying the boundary condition \eqref{boundaryE}$_1$
\begin{equation}
\label{u-v}
\| \nabla \u\|_{L^p(\Omega)}\leq C(p) \| \omega\|_{L^p(\Omega)}, \quad p \in [2,\infty),
\end{equation}
we deduce that
\begin{align*}
&\| \nabla \mu \cdot (\nabla \phi)^\perp\|_{L^p(\Omega)} \| \omega\|_{L^p(\Omega)}\\
&\qquad  \leq C \| \nabla \partial_t \phi\|_{\L2}^\frac{2}{p} \| \Delta \partial_t \phi\|_{\L2}^{1-\frac{2}{p}} \| \nabla \phi\|_{L^\infty(\Omega)}  \| \omega\|_{L^p(\Omega)} \\
&\qquad\quad  + C \| \nabla \uu\|_{L^p(\Omega)} \| \nabla \phi\|_{L^\infty(\Omega)}^2  \| \omega\|_{L^p(\Omega)}\\
&\qquad\quad  + \| \uu\|_{L^\infty(\Omega)} \| \phi\|_{W^{2,p}(\Omega)}
 \| \nabla \phi\|_{L^\infty(\Omega)}  \| \omega\|_{L^p(\Omega)}\\
 & \qquad \leq \frac18 \| \Delta \partial_t \phi\|_{\L2}^2+ C \| \nabla \phi\|_{L^\infty(\Omega)}^{\frac{2p}{p+2}} \| \nabla \partial_t \phi\|_{\L2}^{\frac{4}{p+2}} \| \omega\|_{L^p(\Omega)}^\frac{2p}{p+2}\\
 &\qquad\quad  + C\big( \| \nabla \phi\|_{L^\infty(\Omega)}^2 + \| \phi\|_{W^{2,p}(\Omega)} \| \nabla \phi\|_{L^\infty(\Omega)} \big) \big(1+ \| \omega\|_{L^p(\Omega)}^2\big).
\end{align*}
Next, using \eqref{EAC}$_1$ together with the bounds \eqref{Reg-E1}, we have
\begin{align*}
& \int_{\Omega} \partial_t \uu \cdot \nabla \phi \Delta \partial_t \phi\, \d x\\
&\quad  \leq \int_{\Omega} \mathbb{P} \big( -\uu \cdot \nabla \uu -\Delta \phi \nabla \phi \big) \cdot \nabla \phi \Delta \partial_t \phi \, \d x\\
&\quad  \leq C \| \mathbb{P}\big( \uu \cdot \nabla \uu\big)\|_{\L2} \|\nabla \phi \Delta \partial_t \phi \|_{\L2} + C \| \mathbb{P}\big( \Delta \phi \nabla \phi\big)\|_{\L2} \| \nabla \phi \Delta \partial_t \phi\|_{\L2}\\
&\quad  \leq \frac{1}{8} \| \Delta\partial_t \phi \|_{\L2}^2
+C \| \uu\|_{L^\infty(\Omega)}^2 \| \nabla \uu\|_{\L2}^2 \| \nabla \phi\|_{L^\infty(\Omega)}^2+
C\| \Delta \phi \|_{\L2}^2 \| \nabla \phi\|_{L^\infty(\Omega)}^4\\
&\quad  \leq \frac{1}{8} \| \Delta\partial_t \phi \|_{\L2}^2
+C (1+\| \omega\|_{L^p(\Omega)}^2)  \| \nabla \phi\|_{L^\infty(\Omega)}^2+C\| \nabla \phi\|_{L^\infty(\Omega)}^4.
\end{align*}
Arguing as for \eqref{FFF} and \eqref{unpt}, we have
\begin{align}
 \int_{\Omega} F''(\phi) \partial_t \phi \Delta \partial_t \phi \, \d x  \leq \frac18 \| \Delta \partial_t \phi \|_{\L2}^2 +C \| F''(\phi)\|_{\L2}^2  \ln \big( C\| F''(\phi)\|_{\L2} \big)\| \nabla \partial_t \phi\|_{\L2}^2,\nonumber
\end{align}
and
\begin{align}
&\int_{\Omega} (\uu \cdot \nabla \partial_t \phi) \Delta \partial_t \phi\, \d x
\leq \frac{1}{8} \| \Delta \partial_t \phi\|_{\L2}^2 + C \| \nabla \partial_t \phi\|_{\L2}^2.
\nonumber
\end{align}

Collecting the above estimates and using Young's inequality, we arrive at the differential inequality
\begin{align*}
&\ddt \left(  \|\omega\|_{L^p(\Omega)}^2+ \| \nabla \partial_t \phi\|_{\L2}^2 \right) +  \| \Delta \partial_t \phi\|_{\L2}^2   \leq R_1(t) \left(   \|\omega\|_{L^p(\Omega)}^2+ \| \nabla \partial_t \phi\|_{\L2}^2 \right)  +R_2(t),
\end{align*}
where
$$
R_1= C \Big( 1+
\| \nabla \phi\|_{L^\infty(\Omega)}^2 + \| F''(\phi)\|_{\L2}^2 \ln \big( C\| F''(\phi)\|_{\L2}\big) \Big)
$$
and
$$
R_2 = C \| \phi\|_{W^{2,p}(\Omega)}^2 + C \big(\| \nabla \phi\|_{L^\infty(\Omega)}^4+1).
$$
By using \eqref{BGW}, and recalling \eqref{ineq0}, we see that
\begin{align*}
\| \nabla \phi\|_{L^\infty(\Omega)}^4
& \leq C \| \nabla^2 \phi\|_{\L2}^4 \ln^2 \big( \| \nabla^2 \phi\|_{L^p(\Omega)} \big) +1 \\
& \leq  C  \ln^2 \big( \| \phi\|_{W^{2,p}(\Omega)} \big) +1,
\end{align*}
for $p>2$. In light of \eqref{Reg-E2}, we infer that both $R_1$ and $R_2$ belong to $L^1(0,T)$. Thanks to Gronwall's lemma, we obtain
\begin{align*}
 &\|\omega(t)\|_{L^p(\Omega)}^2+ \| \nabla \partial_t \phi(t)\|_{\L2}^2 \\
 &\quad \leq  \Big( \|\omega(0)\|_{L^p(\Omega)}^2+ \| \nabla \partial_t \phi(0)\|_{\L2}^2 +  \int_0^T R_2(\tau)\, \d \tau\Big) {e}^{\int_0^T R_1(\tau)\, \d \tau},
\end{align*}
for any $t\in [0,T]$. Since $\| \omega (0)\|_{L^p(\Omega)}\leq \| \nabla \uu_0\|_{L^p(\Omega)}$ and
\begin{align*}
\| \nabla \partial_t \phi(0)\|_{\L2}
&\leq  \| (\nabla \uu_0)^t \nabla \phi_0\|_{\L2} + \| (\uu_0\cdot \nabla) \nabla \phi_0\|_{\L2} + \|\nabla \mu_0 \|_{\L2}\\
&\leq C  \| \nabla \uu_0\|_{L^p(\Omega)} \|\phi_0 \|_{H^2(\Omega)}+
C \| \uu_0\|_{L^\infty(\Omega)} \|\phi_0 \|_{H^2(\Omega)}+\|\nabla \mu_0 \|_{\L2}\\
&\leq C\| \uu_0\|_{W^{1,p}(\Omega)} \|\phi_0 \|_{H^2(\Omega)}+\|\nabla \mu_0 \|_{\L2},
\end{align*}
we deduce that for any $p\in (2,\infty)$
$$
\omega \in L^\infty(0,T;L^p(\Omega)),\quad \partial_t \phi \in L^\infty(0,T; H^1(\Omega))\cap L^2(0,T;H^2(\Omega)),  \quad \forall \, T>0.
$$
This, in turn, implies that
$$
\uu \in L^\infty(0,T;W^{1,p}(\Omega)), \quad \phi \in L^\infty(0,T;W^{2,p}(\Omega)), \quad \forall \, T>0.
$$
As a consequence, the above estimates yield that
$$
\widetilde{\mu}=-\Delta \phi+F'(\phi) \in L^2(0,T;L^\infty(\Omega)), \quad \forall \, T>0.
$$
The rest part of the proof is the same as the proof of Theorem \ref{Proreg-D} with the choice $\zeta>0$.

The proof of Theorem \ref{Th-EAC} is complete.
\end{proof}

\section{Conclusions and Future Developments}
\label{con}

In this paper, we present well-posedness results of two Diffuse Interface models that describe the evolution of incompressible binary fluid mixture having (possibly) different densities and viscosities. Our focus is on the mass-conserving Allen-Cahn relaxation of the transport equation with the physically relevant Flory-Huggins potential. We show the existence of global weak solutions in three dimensions and the existence of global strong solutions in two dimensions. For the latter, we discuss additional properties, such as uniqueness, regularity and the property of strict separation from pure states $\pm 1$.
On the other hand, there are several unsolved questions concerning the analysis of the  Navier-Stokes-Allen-Cahn and Euler-Allen-Cahn systems in the three dimensional case, which will be the subject of future investigations.

We conclude by mentioning some interesting open problems related to the results proved in this work:

$\bullet$ Two possible improvements of this work concern the Navier-Stokes-Allen-Cahn system \eqref{NSAC-D}-\eqref{IC-D}. The first question is whether the entropy estimates in Theorem \ref{strong-D} can be achieved for strong solutions with small initial data, but without restrictions on the parameters of the system, or even without any condition on the initial data. The second issue is to show the uniqueness of strong solutions given in Theorem \ref{strong-D}-(1), without relying on the entropy estimates in Theorem \ref{strong-D}-(2). Also, we mention the possibility of considering moving contact lines for the Navier-Stokes-Allen-Cahn system (see \cite{MCYZ2017} for numerical attempts).

$\bullet$ Interesting open issues regarding the Euler-Allen-Cahn system \eqref{EAC}-\eqref{ICE} are the existence and the uniqueness of solutions corresponding to an initial datum $\omega_0 \in L^\infty(\Omega)$ as well as the study of the inviscid limit on arbitrary time intervals (cf. \cite{ZGH2011} for results on short time intervals).
\smallskip

\section*{Acknowledgments}
\noindent
The authors wish to thank the anonymous referee for pointing out several references about the sharp interface limit of Diffuse Interface models and for the careful reading of the manuscript, which significantly improved the quality of our work.
Part of this work was carried out during the first and second authors' visit to School of Mathematical Sciences of Fudan University whose hospitality is gratefully acknowledged. M.~Grasselli is a member of the Gruppo Nazionale per l'Analisi Matematica, la Probabilit\`{a} e le loro applicazioni (GNAMPA) of the Istituto Nazionale di Alta Matematica (INdAM). H.~Wu is partially supported by NNSFC grant No. 12071084, 11631011 and the Shanghai Center for Mathematical Sciences at Fudan University.

\section*{Compliance with Ethical Standards}
\noindent The authors declare that they have no conflict of interest. The authors also confirm that the manuscript has not been submitted to more than one journal for simultaneous consideration and the manuscript has not been published previously (partly or in full).

\appendix
\section{Stokes System with Variable Viscosity}
\label{App-0}
\setcounter{equation}{0}
We prove an elliptic regularity result for the following Stokes  problem with concentration depending viscosity
\begin{equation}
\label{Stokes}
\begin{cases}
-\div (\nu(\phi)D \uu)+\nabla P=\f, \quad &\text{in } \Omega,\\
\div \uu=0,  \quad &\text{in } \Omega,\\
\uu=0, \ \quad &\text{on } \partial \Omega.
\end{cases}
\end{equation}
This result is a variant of \cite[Lemma 4]{A2009}.
\begin{theorem}
\label{Stokes-e}
Let $\Omega$ be a bounded domain of class $\mathcal{C}^2$ in $\mathbb{R}^d$, $d=2,3$. Assume that $\nu\in W^{1,\infty}(\mathbb{R})$ such that $0<\nu_\ast\leq \nu(\cdot)\leq \nu^\ast$  in $\mathbb{R}$, $\phi \in W^{1,r}(\Omega)$ with $r>d$, and $\f\in \mathbf{L}^p(\Omega)$ with $1<p<\infty$ if $d=2$ and $\frac65\leq p<\infty$ if $d=3$.
Consider the (unique) weak solution $\uu\in \V_\sigma$ to \eqref{Stokes} such that $$(\nu(\phi) D\uu,\nabla \ww)= (\,\f,\ww), \quad \forall\, \ww\in \V_\sigma.$$
\smallskip

\begin{itemize}
\item[1.] If $\frac{1}{p}=\frac{1}{2}+\frac{1}{r}$,
there exists $C=C(p,\Omega)>0$ such that
\begin{equation}
\label{regstokes}
\| \u\|_{\mathbf{W}^{2,p}(\Omega)} \leq
C\| \f\|_{L^p(\Omega)}+ C \| \nabla \phi\|_{L^r(\Omega)} \|D \u \|_{L^2(\Omega)}.
\end{equation}

\item[2.] Suppose that $\u \in \mathbf{V}_\sigma\cap \mathbf{W}^{1,s}(\Omega)$ with $s> 2$ such that
$$
\frac{1}{p}=\frac{1}{s}+\frac{1}{r}, \quad r\geq \frac{2s}{s-1}.
$$
Then, there exists $C=C(s,p,\Omega)>0$ such that
\begin{equation}
\label{regstokes2}
\| \u\|_{\mathbf{W}^{2,p}(\Omega)} \leq
C\| \f\|_{L^p(\Omega)}+ C \| \nabla \phi\|_{L^r(\Omega)} \|D \u \|_{L^s(\Omega)}.
\end{equation}
\end{itemize}
\end{theorem}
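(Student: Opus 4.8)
The plan is to deduce Theorem \ref{Stokes-e} from the standard $L^p$-regularity theory for the \emph{constant-viscosity} Stokes operator by treating the variable part of the viscosity as a perturbation placed on the right-hand side. Rewrite the first equation of \eqref{Stokes} as
\begin{equation}
\label{plan-rewrite}
-\nu_\ast \Delta \uu + \nabla P = \f + \div\big( (\nu(\phi)-\nu_\ast) D\uu \big) + \nu_\ast \nabla \div \uu = \f + \div\big( (\nu(\phi)-\nu_\ast) D\uu \big),
\end{equation}
where we used $\div\uu=0$ and $\div(D\uu)=\tfrac12\Delta\uu$ after absorbing the constant; more precisely one writes $-\nu_\ast\div(D\uu) = -\tfrac12\nu_\ast\Delta\uu$ and keeps track of the factor. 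The point is that the right-hand side is $\f$ plus the divergence of the tensor $(\nu(\phi)-\nu_\ast)D\uu$. Since $\nu\in W^{1,\infty}$ and $\phi\in W^{1,r}$, this tensor lies in $W^{1,\min\{2,r\}}$ with
\[
\|\nabla\big((\nu(\phi)-\nu_\ast)D\uu\big)\|_{L^p(\Omega)} \leq C\|\nabla\phi\|_{L^r(\Omega)}\|D\uu\|_{L^2(\Omega)} + C\|D\uu\|_{W^{1,\cdot}}\ ,
\]
so the genuinely dangerous term is $\nu'(\phi)\nabla\phi\,D\uu$, which is estimated by H\"older with exponents $\tfrac1p=\tfrac12+\tfrac1r$ (part 1) or $\tfrac1p=\tfrac1s+\tfrac1r$ (part 2). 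The second-order term $\nabla^2\uu$ appearing from $(\nu(\phi)-\nu_\ast)\nabla D\uu$ has a small coefficient only near points where $\nu$ is close to $\nu_\ast$, which is \emph{not} guaranteed globally; hence a direct Neumann-series/contraction argument does not close.

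To get around this, I would use the classical localization (freezing-the-coefficient) technique. Cover $\overline\Omega$ by finitely many balls on which the oscillation of $\nu(\phi)$ is as small as we wish: since $\phi\in W^{1,r}(\Omega)\hookrightarrow C^{0,\alpha}(\overline\Omega)$ with $\alpha=1-d/r>0$ (Morrey), $\nu(\phi)$ is uniformly continuous on $\overline\Omega$, so for any $\varepsilon>0$ there is a radius $\delta(\varepsilon)$ with $\operatorname{osc}_{B_\delta(x_0)}\nu(\phi)<\varepsilon$. On each such ball (interior balls and boundary-flattened balls separately) one applies the constant-coefficient Stokes estimate to $\chi\uu$, where $\chi$ is a cutoff, moves the $\varepsilon$-small second-order term to the left, and absorbs it; the commutator terms $[\Delta,\chi]\uu$, $\chi\f$ etc. are lower order and controlled by $\|\uu\|_{W^{1,p}}+\|\f\|_{L^p}$. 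Summing over the finite cover yields
\[
\|\uu\|_{W^{2,p}(\Omega)} \leq C\|\f\|_{L^p(\Omega)} + C\|\nabla\phi\|_{L^r(\Omega)}\|D\uu\|_{L^{\sigma}(\Omega)} + C\|\uu\|_{W^{1,p}(\Omega)},
\]
with $\sigma=2$ for part 1 and $\sigma=s$ for part 2, where in part 2 we must check the constraint $r\geq\frac{2s}{s-1}$ ensures the intermediate embeddings needed to bound the commutators (the product $\nabla\phi\otimes\nabla\chi\otimes\uu$-type terms). This last constraint is exactly why part 2 carries the hypothesis $r\geq\frac{2s}{s-1}$: it guarantees $W^{1,s}\hookrightarrow L^{r'}$-type dualities so that the lower-order remainders sit in $L^p$.

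The remaining task is to remove the $\|\uu\|_{W^{1,p}(\Omega)}$ term on the right. Here one uses the a priori bound on the weak solution: testing $(\nu(\phi)D\uu,\nabla\ww)=(\f,\ww)$ with $\ww=\uu$, Korn's inequality \eqref{KORN} and $\nu\geq\nu_\ast$ give $\|\nabla\uu\|_{L^2(\Omega)}\leq C\|\f\|_{(\V_\sigma)'}\leq C\|\f\|_{L^{p'}}$ (valid in the stated range of $p$ since $L^p\hookrightarrow (\V_\sigma)'$ for $p\geq\frac{6}{5}$ in 3D, $p>1$ in 2D via Sobolev embedding), hence $\|\uu\|_{W^{1,2}(\Omega)}\leq C\|\f\|_{L^p(\Omega)}$; then interpolation between $W^{1,2}$ and $W^{2,p}$, together with Young's inequality, absorbs $C\|\uu\|_{W^{1,p}}$ into $\tfrac12\|\uu\|_{W^{2,p}}$ plus $C\|\f\|_{L^p}$. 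For part 2 one starts instead from the assumed regularity $\uu\in W^{1,s}(\Omega)$ and does not need to re-derive it. I expect the \textbf{main obstacle} to be the bookkeeping in the localization step: carefully choosing the oscillation threshold $\varepsilon$ relative to the norm of the constant-coefficient Stokes resolvent, handling the boundary balls (where one flattens $\partial\Omega$ using the $\mathcal{C}^2$ regularity and must control the error terms from the change of variables, which are first order in the metric and hence lower order), and verifying that all commutator and product remainders genuinely land in $L^p$ under the exponent relations $\tfrac1p=\tfrac12+\tfrac1r$ respectively $\tfrac1p=\tfrac1s+\tfrac1r$ with $r\geq\frac{2s}{s-1}$. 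Everything else is a routine, if lengthy, application of \cite[Lemma 4]{A2009} and standard Stokes estimates.
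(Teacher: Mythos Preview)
Your localization/freezing-coefficients route would produce a valid $W^{2,p}$ regularity result, but \emph{not} the estimate as stated: the theorem requires $C=C(p,\Omega)$ (resp.\ $C(s,p,\Omega)$) to be independent of $\phi$, with the entire $\phi$-dependence concentrated in the additive term $C\|\nabla\phi\|_{L^r}\|D\uu\|_{L^\sigma}$. In your scheme the ball radius $\delta$ is dictated by the modulus of continuity of $\nu(\phi)$, hence by $\|\nabla\phi\|_{L^r}$ through the Morrey embedding; the cutoff derivatives $|\nabla\chi_k|\sim\delta^{-1}$, $|\nabla^2\chi_k|\sim\delta^{-2}$ then enter the commutator terms and, after summing over the cover, leave a constant in front of $\|\f\|_{L^p}$ that depends on $\|\nabla\phi\|_{L^r}$ in a non-explicit way. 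The subsequent interpolation step to absorb $\|\uu\|_{W^{1,p}}$ only compounds this. This matters in the applications (Sections \ref{S-Complex} and \ref{S-STRONG}): $\phi$ is time-dependent and the estimate is fed into Gronwall-type arguments, so the precise form with $C$ independent of $\phi$ is essential.

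The paper avoids localization entirely by a duality trick with the Bogovskii operator $B$: for $\vv\in C^\infty_{0,\sigma}(\Omega)$ one tests the weak formulation with $\ww=\dfrac{\vv}{\nu(\phi)}-B\Big[\div\Big(\dfrac{\vv}{\nu(\phi)}\Big)\Big]\in\V_\sigma$, which makes the variable viscosity cancel in the principal term and yields directly $(D\uu,\nabla\vv)=(\widetilde{\f},\vv)$ with $\|\widetilde{\f}\|_{L^p}\leq C\|\f\|_{L^p}+C\|\nabla\phi\|_{L^r}\|D\uu\|_{L^\sigma}$ and $C$ depending only on $\nu_\ast$, $\|\nu'\|_\infty$, $p$, $\Omega$ (through the Bogovskii constant). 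One then invokes the constant-coefficient Stokes $L^p$-estimate once, globally. The exponent conditions $\tfrac1p=\tfrac1\sigma+\tfrac1r$ and $r\geq\tfrac{2s}{s-1}$ (equivalently $r\geq p'$) are exactly what is needed for the H\"older and Bogovskii bounds on the three remainder terms; Case~1 is recovered from Case~2 by approximating $\phi$ with smooth $\phi_n$. If you want to salvage your approach, you would have to find a way to make the number and scale of the balls independent of $\phi$, which is not possible without a smallness assumption on the oscillation of $\nu$; the Bogovskii test-function trick is the mechanism that bypasses this.
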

\begin{proof}
We denote by $B$ the Bogovskii operator. We recall that
$B: L^{q}_{(0)}(\Omega) \rightarrow W^{1,q}_0(\Omega)$, $1<q<\infty$,  such that $\div B f=f$.
It is well-known (see, e.g., \cite[Theorem III.3.1]{Galdi}) that, for all $1<q<\infty$,
\begin{equation}
\label{Bog-W1p}
\| Bf \|_{W^{1,q}(\Omega)}\leq C \| f\|_{L^q(\Omega)}.
\end{equation}
In addition, by \cite[Theorem III.3.4]{Galdi}, if $f = \div \, \g$, where $\g\in \mathbf{L}^q(\Omega)$, $1<q<\infty$, is such that $\div \g \in L^q(\Omega)$, and $\g\cdot \n=0$ almost everywhere on $\partial \Omega$, we have
\begin{equation}
\label{Bog-L2}
\| B f \|_{L^q(\Omega)}\leq C \| \g\|_{L^q(\Omega)}.
\end{equation}

For the sake of simplicity, we start proving the second part of Theorem \ref{Stokes-e}, and then we show the first part.
\smallskip

\textbf{Case 2}.
Let us take $\vv \in \C^\infty_{0,\sigma}(\Omega)$. As in \cite[Lemma 4]{A2009}, we define $$\ww=\frac{\vv}{\nu(\phi)}- B\left[\div \left(\frac{\vv}{\nu(\phi)}\right)\right].$$
We observe that $\ww \in \mathbf{W}_0^{1,r}(\Omega)$ with $\div \ww=0$. In particular, $\ww \in \V_\sigma$.
Taking $\ww$ in the weak formulation, we obtain
\begin{align*}
(D \uu,\nabla \vv)&=\left( \f, \frac{\vv}{\nu(\phi)}- B \left[ \div \left( \frac{\vv}{\nu(\phi)} \right) \right] \right) - \left( \nu(\phi) D \uu, \vv \otimes \nabla \left(\frac{1}{\nu(\phi)}\right) \right)\\
&\quad + \left( \nu(\phi) D \uu, \nabla B \left[ \div \left( \frac{\vv}{\nu(\phi)}
\right) \right] \right).
\end{align*}
Since $\frac{2s}{s-1}\leq r$, we deduce that $r\geq p'$ $(\frac{1}{p'}=1-\frac{1}{p})$. This implies that $\div \left( \frac{\vv}{\nu(\phi)} \right) \in L^{p'}(\Omega)$.
By using the assumptions on $\nu$ and the estimate \eqref{Bog-L2} with $q=p'$,
we find
\begin{align*}
\left| \left( \f, \frac{\vv}{\nu(\phi)}- B \left[ \div \left( \frac{\vv}{\nu(\phi)} \right) \right] \right) \right|
&\leq\|\, \f\|_{L^p(\Omega)} \left(  \frac{1}{\nu_\ast} \| \vv\|_{L^{p'}(\Omega)}+
\left\| B \left[ \div \left( \frac{\vv}{\nu(\phi)} \right) \right] \right\|_{L^{p'}(\Omega)} \right)\\
&\leq C \|\, \f\|_{L^p(\Omega)} \left( \frac{1}{\nu_\ast} \| \vv\|_{L^{p'}(\Omega)}+ \left\| \frac{\vv}{\nu(\phi)}\right\|_{L^{p'}(\Omega)}\right)\\
&\leq C \|\, \f\|_{L^p(\Omega)} \| \vv\|_{L^{p'}(\Omega)}.
\end{align*}
Also, we have
\begin{align*}
\left| \left( \nu(\phi) D \uu, \vv \otimes \nabla \left(\frac{1}{\nu(\phi)}\right) \right)\right|
&=\left| \left( D \uu, \frac{\nu'(\phi)}{\nu^2(\phi)} \vv \otimes \nabla \phi \right)\right|\\
&\leq C \| D \uu\|_{L^s(\Omega)} \| \nabla \phi\|_{L^r(\Omega)} \| \vv\|_{L^{p'}(\Omega)}.
\end{align*}
Recalling that $\div \vv=0$ and $r>s'$, by using \eqref{Bog-W1p} we obtain
\begin{align*}
\left| \left( \nu(\phi) D \uu, \nabla B \left[ \div \left( \frac{\vv}{\nu(\phi)} \right) \right] \right) \right|
&\leq  \| D \uu\|_{L^s(\Omega)} \left\| \nabla B \left[ \nabla \left( \frac{1}{\nu(\phi)} \right) \cdot \vv \right] \right\|_{L^{s'}(\Omega)}\\
&\leq C \| D \uu\|_{L^s(\Omega)} \left\| \nabla \left( \frac{1}{\nu(\phi)} \right) \cdot \vv \right\|_{L^{s'}(\Omega)}\\
&\leq  C \| D \uu\|_{L^s(\Omega)} \| \nabla \phi\|_{L^r(\Omega)} \| \vv\|_{L^{p'}(\Omega)}.
\end{align*}
Therefore, by the Riesz representation theorem and a density argument, we reach
$$
(D \uu, \nabla \vv)= (\,\widetilde{\f},\vv), \quad \forall \,  \vv \in \V_\sigma,
$$
where
$$
\|\, \widetilde{\f}\|_{L^p(\Omega)}\leq C \|\, \f\|_{L^p(\Omega)}+ C\| D \uu\|_{L^s(\Omega)} \| \nabla \phi\|_{L^r(\Omega)},
$$
for some $C$ depending on $s, p$ and $\Omega$.
By the regularity of the Stokes operator (see, e.g., \cite[Theorem IV.6.1]{Galdi}), the claim easily follows.
\smallskip

\textbf{Case 1}. We consider $\phi_n \in C_c^\infty(\overline{\Omega})$ such that $\phi_n \rightarrow \phi$ in $W^{1,r}(\Omega)$ as $n\rightarrow \infty$. For any $n \in \mathbb{N}$, we define $\uu_n$ as the solution to
$$
(\nu(\phi_n) D \uu_n, \nabla \ww)= (\f,\ww), \quad \forall \, \ww \in \V_\sigma.
$$
Since $\nu(\cdot)\geq \nu_\ast>0$, by taking $\ww=\uu_n$, it is easily seen that $\lbrace \uu_n\rbrace_{n\in \mathbb{N}}$ is bounded in $\V_\sigma$ independently of $n$. In addition, recalling that $W^{1,r}(\Omega)\hookrightarrow L^\infty(\Omega)$, we have $\nu(\phi_n)\rightarrow \nu(\phi)$ in $L^\infty(\Omega)$. By uniqueness of the weak solution $\uu$ to \eqref{Stokes}, we deduce that $\uu_n \rightharpoonup \uu$ weakly in $\V_\sigma$.

Let us take $$\ww=\frac{\vv}{\nu(\phi_n)}- B \left[\div \left( \frac{\vv}{\nu(\phi_n)}\right) \right]$$ with $\vv \in \C^\infty_{0,\sigma}(\Omega)$. Then we find
\begin{align*}
(D \uu_n,\nabla \vv)&=\left( \f, \frac{\vv}{\nu(\phi_n)}- B\left[\div \left(\frac{\vv}{\nu(\phi_n)}\right) \right] \right)- \left( \nu(\phi_n) D \uu_n, \vv \otimes \nabla \left(\frac{1}{\nu(\phi_n)}\right) \right) \\
&\quad + \left( \nu(\phi_n) D \uu_n, \nabla B \left[ \div \left(\frac{\vv}{\nu(\phi_n)}\right) \right] \right).
\end{align*}
Note that, by construction, $\frac{\vv}{\nu(\phi_n)} \in W^{1,q}(\Omega)$ for all $q \in [1,\infty]$. Therefore, by repeating the same computations carried out above with $s=2$, we arrive at
$$
(D \uu_n, \nabla \vv)= (\,\widetilde{\f},\vv), \quad \forall \,  \vv \in \V_\sigma,
$$
where
$$
\|\, \widetilde{\f}\|_{L^p(\Omega)}\leq C \|\, \f\|_{L^p(\Omega)}+ C\| D \uu_n\|_{L^2(\Omega)} \| \nabla \phi_n\|_{L^r(\Omega)},
$$
for some $C$ depending on $p$ and $\Omega$.
By the regularity theory of the Stokes operator, we infer
\begin{align*}
\| \uu_n\|_{W^{2,p}(\Omega)}\leq C \|\, \f\|_{L^p(\Omega)}+ C\| D \uu_n\|_{L^2(\Omega)} \| \nabla \phi_n\|_{L^r(\Omega)}.
\end{align*}
Since $\lbrace \uu_n\rbrace_{n\in \mathbb{N}}$ is bounded in $\V_\sigma$ and $\phi_n \rightarrow \phi$ in $W^{1,r}(\Omega)$, $\uu_n$ is bounded in $\W^{2,p}(\Omega)$ independently of $n$. By the choice of the parameters $r>d$ and $\frac{1}{p}=\frac12 +\frac{1}{r}$, $\W^{2,p}(\Omega)\cap \V_\sigma$ is compactly embedded in $\V_\sigma$. In particular, $\| D \uu_n\|_{L^2(\Omega)} \rightarrow \| D \uu\|_{L^2(\Omega)}$ as $n\rightarrow \infty$. As a consequence, by the lower semi-continuity of the norm with respect to the weak topology, the conclusion follows. The proof is complete.
\end{proof}

\section{Some Lemmas on ODE Inequalities}
\label{App}
\setcounter{equation}{0}
For convenience of the readers, we collect some useful results concerning ODE inequalities that have been used in this paper. First, we report the Osgood lemma.

\begin{lemma}
\label{Osgood}
Let $f$ be a measurable function from $[0,T]$ to $[0,a]$, $g \in L^1(0,T)$, and $W$ a continuous and nondecreasing function from $[0,a]$ to $\mathbb{R}^+$. Assume that, for some $ c \geq 0$, we have
$$
f(t)\leq c+ \int_0^t g(s) W(f(s)) \, \d s, \quad \text{for a.e.}\, t\in [0,T].
$$
\begin{itemize}
\item[-] If $c>0$, then for almost every $t \in [0,T]$
$$
-\mathcal{M}(f(t))+\mathcal{M}(c)\leq \int_0^T g(s)\, \d s, \quad \text{where} \quad
\mathcal{M}(s)=\int_s^a \frac{1}{W(s)} \, \d s.
$$
\item[-] If $c=0$ and $\int_0^a \frac{1}{W(s)} \, \d s= \infty$, then $f(t)=0$ for almost every $t\in [0,T]$.
\end{itemize}
\end{lemma}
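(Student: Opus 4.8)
The plan is to follow the classical comparison argument: build an absolutely continuous majorant of $f$, separate variables through $\mathcal{M}$, and then pass to the limit in the perturbation parameter for the degenerate case $c=0$.

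First I would note that, since $W\ge 0$, we may without loss of generality assume $g\ge 0$: the hypothesis implies $f(t)\le c+\int_0^t|g(s)|W(f(s))\,\d s$, so replacing $g$ by $|g|$ changes nothing. Then set $R(t)=c+\int_0^t g(s)W(f(s))\,\d s$. Because $f$ takes values in the compact interval $[0,a]$ and $W$ is continuous there, $W(f(\cdot))$ is bounded, hence $gW(f(\cdot))\in L^1(0,T)$ and $R$ is absolutely continuous with $R'(t)=g(t)W(f(t))$ for a.e. $t$. By hypothesis $f(t)\le R(t)$ for a.e. $t$; since $W$ is nondecreasing and $g\ge 0$, this yields the differential inequality $R'(t)\le g(t)W(R(t))$ for a.e. $t$.

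Next I would use the properties of $\mathcal{M}$. On $(0,a]$ the function $\mathcal{M}(s)=\int_s^a W(r)^{-1}\,\d r$ is well defined and finite (here one uses that $W>0$, which is implicit in $W$ taking values in $\mathbb{R}^+$ and in $\mathcal{M}(s)$ being finite for $s>0$, by continuity and positivity of $W$ on $[s,a]$), is $C^1$, and strictly decreasing with $\mathcal{M}'(s)=-W(s)^{-1}$. In the case $c>0$ we have $R(t)\ge c>0$ for all $t$, so $1/W$ is bounded on the range of $R$ and $\mathcal{M}$ is Lipschitz there; consequently $t\mapsto\mathcal{M}(R(t))$ is absolutely continuous and the chain rule gives, for a.e. $t$,
$$
\frac{\d}{\d t}\,\mathcal{M}(R(t))=-\frac{R'(t)}{W(R(t))}\ge -g(t).
$$
Integrating over $[0,t]$ and using $R(0)=c$, we obtain $\mathcal{M}(R(t))-\mathcal{M}(c)\ge -\int_0^t g(s)\,\d s\ge -\int_0^T g(s)\,\d s$. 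Finally, since $\mathcal{M}$ is decreasing and $f(t)\le R(t)$, we have $\mathcal{M}(f(t))\ge\mathcal{M}(R(t))$, hence $-\mathcal{M}(f(t))+\mathcal{M}(c)\le -\mathcal{M}(R(t))+\mathcal{M}(c)\le\int_0^T g(s)\,\d s$, which is the first assertion. For the case $c=0$: the hypothesis with $c=0$ implies the same hypothesis with every $c=\varepsilon\in(0,a]$, so the first part yields $\mathcal{M}(\varepsilon)\le\mathcal{M}(f(t))+\int_0^T g(s)\,\d s$ for a.e. $t$ and every such $\varepsilon$. Letting $\varepsilon\to0^+$, the left-hand side tends to $\int_0^a W(r)^{-1}\,\d r=+\infty$ by assumption, while the right-hand side stays finite unless $\mathcal{M}(f(t))=+\infty$; since $\mathcal{M}(s)<\infty$ for every $s>0$, this forces $f(t)=0$ for a.e. $t$.

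The step I expect to need the most care is the differentiation of the composition $\mathcal{M}(R(t))$: one must check that $\mathcal{M}$ is Lipschitz on the interval actually visited by $R$ — which holds as soon as $c>0$, since then $R\ge c$ and $1/W$ is bounded on $[c,a]$ — so that the composition of a Lipschitz function with an absolutely continuous one is again absolutely continuous and the a.e.\ chain rule applies; the null set in the hypothesis ``$f(t)\le\cdots$ a.e.'' is harmless since it only affects $R'$ on a set of measure zero. The remaining arguments are elementary manipulations of the monotone function $\mathcal{M}$.
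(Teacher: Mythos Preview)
Your proof is correct and follows the standard comparison argument for the Osgood lemma. Note, however, that the paper does not actually prove this lemma: in Appendix~\ref{App} it is merely \emph{reported} as a known result (``we report the Osgood lemma''), with no proof given. So there is nothing in the paper to compare your argument against.

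That said, your argument is the classical one: build the absolutely continuous majorant $R(t)=c+\int_0^t g(s)W(f(s))\,\d s$, derive $R'\le gW(R)$ from $f\le R$ and the monotonicity of $W$, separate variables via $\mathcal{M}$, and handle $c=0$ by perturbation. Your attention to the point that $\mathcal{M}\circ R$ is absolutely continuous (because $\mathcal{M}$ is Lipschitz on $[c,a]$ when $c>0$) is exactly the right technical care. One small remark: the reduction to $g\ge 0$ is harmless for the first assertion but slightly changes the right-hand side to $\int_0^T|g|$; since the paper's applications all have $g\ge 0$ anyway (it plays the role of a nonnegative coefficient in the Gronwall-type inequalities), this is immaterial.
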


Next, we report two generalizations of the classical Gronwall lemma and the uniform Gronwall lemma.

\begin{lemma}
\label{GL2}
Let $f$ be a positive absolutely continuous function on $[0,T]$ and $g$, $h$ be two summable functions on $[0,T]$ that satisfy the differential inequality
$$
\ddt f(t)\leq g(t)f(t)\ln\big( e+ f(t)\big)+h(t),
$$
for almost every $t\in [0,T]$.
Then, we have
$$
f(t)\leq  \big( e+f(0)\big)^{e^{\int_0^t g(\tau)\, \d \tau}} e^
{ \int_0^t e^{\int_\tau^t g(s)\, \d s} h(\tau) \, \d \tau},
\quad \forall
\, t \in [0,T].
$$
\end{lemma}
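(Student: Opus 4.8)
The plan is to reduce the nonlinear differential inequality to a linear one via the logarithmic change of unknown $\psi(t):=\ln\bigl(e+f(t)\bigr)$, and then to invoke the classical Gronwall lemma. Since $f$ is positive and absolutely continuous on $[0,T]$, it is bounded there, and $s\mapsto\ln(e+s)$ is Lipschitz on bounded subsets of $[0,\infty)$; hence $\psi$ is absolutely continuous, with $\psi(t)\ge 1$ and, for a.e.\ $t$,
$$
\psi'(t)=\frac{f'(t)}{e+f(t)}.
$$

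First I would insert the hypothesis $f'(t)\le g(t)f(t)\ln(e+f(t))+h(t)$ into this identity. Dividing by $e+f(t)>0$ and using the positivity of $g,h$ together with the elementary bounds $\frac{f(t)}{e+f(t)}\le 1$ and $\frac{1}{e+f(t)}\le 1$ (the latter because $e+f(t)\ge e>1$), I obtain the linear differential inequality
$$
\psi'(t)\le g(t)\,\psi(t)+h(t)\qquad\text{for a.e. }t\in[0,T].
$$
This is the only place where the signs of $g$ and $h$ enter: if $g$ were allowed to change sign one would instead keep the factor $\frac{f}{e+f}$ and split $g$ into positive and negative parts, but in all the applications in Sections~\ref{S-STRONG} and \ref{EAC-sec} both $g$ and $h$ are manifestly nonnegative, so I take $g,h\ge 0$.

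Next I would apply the classical linear Gronwall lemma: multiplying the last inequality by the integrating factor $e^{-\int_0^t g(s)\,\mathrm{d}s}$ and integrating over $[0,t]$ yields
$$
\psi(t)\le \psi(0)\,e^{\int_0^t g(s)\,\mathrm{d}s}+\int_0^t e^{\int_\tau^t g(s)\,\mathrm{d}s}\,h(\tau)\,\mathrm{d}\tau .
$$
Recalling $\psi(0)=\ln\bigl(e+f(0)\bigr)$ and taking exponentials of both sides, which preserves the inequality by monotonicity of $\exp$, gives
$$
e+f(t)\le \bigl(e+f(0)\bigr)^{e^{\int_0^t g(s)\,\mathrm{d}s}}\,\exp\!\Big(\int_0^t e^{\int_\tau^t g(s)\,\mathrm{d}s}\,h(\tau)\,\mathrm{d}\tau\Big),
$$
and the assertion follows from $f(t)\le e+f(t)$.

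I do not expect a genuine obstacle here: the substitution $\psi=\ln(e+f)$ linearizes the inequality, and the remainder is the standard Gronwall estimate followed by exponentiation. The only care required is the sign bookkeeping described above, namely checking that dividing by $e+f$ produces an inequality with the correct direction (this is where the positivity of $g,h$ and the bound $e+f\ge e>1$ are used) and verifying that $\psi$ is admissible for the linear Gronwall lemma (absolute continuity of $\psi$, summability of $g$ and $h$).
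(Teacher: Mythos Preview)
The paper does not actually prove this lemma; it is merely \emph{stated} in Appendix~\ref{App} as one of several ODE inequalities collected ``for convenience of the readers,'' so there is no in-paper argument to compare against. Your approach---set $\psi=\ln(e+f)$ to linearize, apply the classical integrating-factor Gronwall lemma, then exponentiate---is the standard and correct one, and it yields exactly the stated bound.

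One point worth making explicit: your proof requires $g,h\ge 0$ (to pass from $\frac{f}{e+f}\,g\,\psi$ to $g\,\psi$ and from $\frac{h}{e+f}$ to $h$), whereas the lemma as stated in the paper only says $g,h$ are summable. You acknowledge this and restrict to nonnegative $g,h$, noting that this is what occurs in the applications. That is the right call: without the sign hypothesis the conclusion is in fact \emph{false} (e.g.\ $g\equiv 0$, $h\equiv -K$ with $K$ large produces an exponentially decaying bound that a linearly decaying $f$ violates). So the paper's hypothesis list is slightly underspecified, and your added assumption is both necessary and sufficient for the argument.
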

\begin{lemma}
\label{UGL2}
Let $f$ be an absolutely continuous positive function on $[0,\infty)$
and $g$, $h$ be two positive locally summable functions on $[0,\infty)$ which satisfy the differential inequality
$$
\ddt f(t) \leq g(t)f(t)\ln\big( e+ f(t)\big) +h(t),
$$
for almost every $t\geq 0$, and the uniform bounds
$$
\int_t^{t+r} f(\tau)\, \d \tau\leq a_1,
\quad \int_t^{t+r} g(\tau)\, \d \tau \leq a_2,
\quad \int_t^{t+r} h(\tau)\, \d \tau \leq a_3, \quad \forall \, t \geq 0,
$$
for some positive constants $r$, $a_1$, $a_2$, $a_3$.
Then, we have
$$
f(t)\leq e^{\big(\frac{r+a_1}{r}+a_3\big)e^{a_2}}, \quad \forall \, t \geq r.
$$
\end{lemma}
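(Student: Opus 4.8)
The plan is to reduce the logarithmic differential inequality to a genuinely linear one by the change of unknown $\psi(t):=\ln(e+f(t))$, and then run the classical uniform Gronwall argument for $\psi$. Since $f$ is absolutely continuous and $e+f(t)\ge e$, the function $\psi$ is absolutely continuous, and for a.e.\ $t\ge 0$ one has $\ddt\psi(t)=f'(t)/(e+f(t))$. Using the assumed inequality together with the elementary bound $f(t)/(e+f(t))\le 1$, I would obtain
$$
\ddt \psi(t)\le g(t)\,\frac{f(t)}{e+f(t)}\ln(e+f(t))+\frac{h(t)}{e+f(t)}\le g(t)\,\psi(t)+h(t),
$$
for a.e.\ $t\ge 0$. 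It is precisely the additive (rather than multiplicative) appearance of $h$ in the original inequality that makes this reduction clean; this is the same linearization trick that underlies Lemma \ref{GL2}.

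Next I would transfer the integral bounds to $\psi$. From $\ln(e+s)\le 1+s$ for $s\ge 0$ we get $\psi(\tau)\le 1+f(\tau)$, hence $\int_t^{t+r}\psi(\tau)\,\d\tau\le r+a_1$ for all $t\ge 0$, while the bounds on $g$ and $h$ are untouched. Now fix $t\ge r$ and let $s\in[t-r,t]$. Multiplying $\psi'\le g\psi+h$ by the integrating factor $\exp(-\int_0^\tau g)$ and integrating from $s$ to $t$ yields
$$
\psi(t)\le \psi(s)\exp\Big(\int_s^t g\Big)+\int_s^t h(\sigma)\exp\Big(\int_\sigma^t g\Big)\,\d\sigma\le e^{a_2}\psi(s)+a_3\,e^{a_2},
$$
since $\int_s^t g\le\int_{t-r}^t g\le a_2$ and the $h$-term is bounded in the same way. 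Averaging this over $s\in[t-r,t]$ and using $\int_{t-r}^t\psi(s)\,\d s\le r+a_1$ gives $\psi(t)\le e^{a_2}\big(\tfrac{r+a_1}{r}+a_3\big)$ for every $t\ge r$.

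Finally, since $f(t)=e^{\psi(t)}-e\le e^{\psi(t)}$, the previous bound yields $f(t)\le \exp\big((\tfrac{r+a_1}{r}+a_3)e^{a_2}\big)$ for all $t\ge r$, which is the assertion. I do not expect a genuine obstacle here: the whole content is the observation that $\psi=\ln(e+f)$ linearizes the inequality, after which only the standard uniform Gronwall bookkeeping remains. The only points requiring mild care are the correct orientation of the two elementary estimates ($f/(e+f)\le 1$ and $\ln(e+s)\le 1+s$), and the fact that for $t\ge r$ every integral of $g$ or $h$ appearing in the integrating-factor computation is taken over a subinterval of $[t-r,t]$ of length at most $r$, so that the uniform-in-$t$ bounds $a_2$ and $a_3$ genuinely apply.
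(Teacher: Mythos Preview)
Your proof is correct. The paper itself states Lemma \ref{UGL2} without proof in Appendix \ref{App}, so there is no argument in the paper to compare against; your substitution $\psi=\ln(e+f)$ followed by the classical uniform Gronwall argument is exactly the natural proof and matches the bound claimed.
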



\end{document}